\setlist[enumerate,1]{label=\textup{(\arabic*)}}
\newcommand*{\MRref}[2]{ \href{http://www.ams.org/mathscinet-getitem?mr=#1}{MR \textbf{#1}}}
\renewcommand*{\PrintDOI}[1]{\href{http://dx.doi.org/\detokenize{#1}}{doi: \detokenize{#1}}}
\numberwithin{equation}{section}
\theoremstyle{plain}
\newtheorem{thm}[equation]{Theorem}
\newtheorem{cor}[equation]{Corollary}
\newtheorem{lem}[equation]{Lemma}
\newtheorem{prop}[equation]{Proposition}
\theoremstyle{definition}
\newtheorem{defn}[equation]{Definition}
\newtheorem*{ack*}{Acknowledgements}
\theoremstyle{remark}
\newtheorem{rem}[equation]{Remark}
\newtheorem{example}[equation]{Example}
\newcommand{\ZZ}{\mathbb{Z}}
\newcommand{\NN}{\mathbb{N}}
\newcommand{\TT}{\mathbb{T}}
\newcommand{\FF}{\mathbb{F}}
\newcommand{\CC}{\mathbb{C}}
\newcommand*{\nb}{\nobreakdash}
\newcommand*{\Star}{\(^*\)\nobreakdash-}
\newcommand{\Cst}{\mathrm{C}^*}
\newcommand{\idealin}{\mathrel{\triangleleft}} 
\newcommand*{\Bound}{\mathbb B}
\newcommand*{\Comp}{\mathbb K}
\newcommand{\Hilm}[1][E]{\mathcal{#1}}
\newcommand{\Toep}{\mathcal{T}}
\newcommand{\CP}{\mathcal{O}}
\newcommand{\id}{\mathrm{id}}
\DeclarePairedDelimiter{\bra}{\langle}{\rvert}
\DeclarePairedDelimiter{\ket}{\lvert}{\rangle}
\DeclarePairedDelimiterX{\braket}[2]{\langle}{\rangle}{#1\,\delimsize\vert\,\mathopen{}#2}
\DeclarePairedDelimiterX{\BRAKET}[2]{\langle}{\rangle}{\!\delimsize\langle#1\,\delimsize\vert\,\mathopen{}#2\delimsize\rangle\!}
\DeclarePairedDelimiterX{\setgiven}[2]{\{}{\}}{#1\,{:}\,\mathopen{}#2}
\begin{document}
\title[On \texorpdfstring{$\Cst$\nb-}{C*-}algebras associated to product systems]{On \texorpdfstring{$\Cst$\nb-}{C*-}algebras associated to product systems}

\author{Camila F. Sehnem}
\email{camila.fabre-sehnem@mathematik.uni-goettingen.de}

\address{Mathematisches Institut, Georg-August-Universität Göttingen,
  Bunsenstraße 3--5, 37073, Göttingen, Germany}

\keywords{product system; covariance algebra; Cuntz--Pimsner algebra; Cuntz--Nica--Pimsner algebra}

\begin{abstract}
Let $P$ be a unital subsemigroup of a group~$G$. We propose an approach to $\Cst$\nb-algebras associated to product systems over~$P$. We call the $\Cst$\nb-algebra of a given product system~$\Hilm$ its covariance algebra and denote it by~$A\times_{\Hilm}P$, where~$A$ is the coefficient $\Cst$\nb-algebra. We prove that our construction does not depend on the embedding~$P\hookrightarrow G$ and that a representation of~$A\times_{\Hilm}P$ is faithful on the fixed-point algebra for the canonical coaction of~$G$ if and only if it is faithful on~$A$. We compare this with other constructions in the setting of irreversible dynamical systems, such as Cuntz--Nica--Pimsner algebras, Fowler's Cuntz--Pimsner algebra, semigroup $\Cst$\nb-algebras of Xin Li and Exel's crossed products by interaction groups.
\end{abstract}
\maketitle

\section{Introduction}
 
Let $A$ be a $\Cst$\nb-algebra. A correspondence~$\Hilm\colon A\leadsto A$ consists of a right Hilbert $A$\nb-module with a nondegenerate left action of~$A$ implemented by a \Star homomorphism $\varphi\colon A\rightarrow\Bound(\Hilm)$. We say that~$\Hilm$ is faithful if the left action of~$A$ is injective. It is proper if~$A$ acts by compact operators on~$\Hilm$. A celebrated construction by Pimsner associates a $\Cst$\nb-algebra~$\CP_{\Hilm}$ to a faithful correspondence $\Hilm\colon A\leadsto A$~\cite{Pimsner:Generalizing_Cuntz-Krieger}. This is now known as a~\emph{Cuntz--Pimsner algebra}. It is the universal $\Cst$\nb-algebra for representations of~$\Hilm$ that satisfy a certain condition, now called \emph{Cuntz--Pimsner covariance}, on the ideal~$J=\varphi^{-1}(\Comp(\Hilm))\idealin A$. Pimsner's $\Cst$\nb-algebra includes many interesting $\Cst$\nb-algebras, such as crossed products by automorphisms and graph $\Cst$\nb-algebras for graphs with no sinks~\cite{Katsura:class_I}. It also covers crossed products by extendible and injective endomorphisms with hereditary range.

For a non-faithful correspondence~$\Hilm$, Pimsner's $\Cst$\nb-algebra may be zero. Muhly and Solel proposed a construction of $\Cst$\nb-algebras associated to (not necessarily faithful) correspondences by taking universal $\Cst$\nb-algebras for representations satisfying the covariance condition only on an ideal~$J\idealin A$ with~$J\subseteq\varphi^{-1}(\Comp(\Hilm))$~\cite{Muhly-Solel:Tensor}. In~\cite{Katsura:Cstar_correspondences}, Katsura provided necessary and sufficient conditions on the ideal~$J$ for the universal representation of~$\Hilm$ in~$\CP_{J,\Hilm}$ to be injective.  Inspired by graph $\Cst$\nb-algebras, among other constructions, he analysed the relative Cuntz--Pimsner algebra~$\CP_{ J_{\Hilm},\Hilm}$ with~$J_{\Hilm}\coloneqq (\ker\varphi)^\perp\cap\varphi^{-1}(\Comp(\Hilm))$. This~$\Cst$\nb-algebra has nice properties. First, the universal representation of~$\Hilm$ in~$\CP_{ J_{\Hilm},\Hilm}$ is injective. Hence, it encodes the correspondence structure of~$\Hilm$. In addition, $\CP_{ J_{\Hilm},\Hilm}$ satisfies a gauge-invariant uniqueness theorem, that is, a representation of~$\CP_{ J_{\Hilm},\Hilm}$ in a $\Cst$\nb-algebra~$B$ is faithful if it is faithful on the coefficient algebra~$A$ and respects the gauge action of the unit circle~$\TT$. 

 Roughly speaking, a product system may be regarded as an action of a semigroup by correspondences over a $\Cst$\nb-algebra. A product system over a semigroup~$P$ with unit element denoted by~$e$ is a family of correspondences~$\Hilm=(\Hilm_p)_{p\in P}$ with~$\Hilm_e=A$ together with correspondence isomorphisms $\Hilm_p\otimes_A\Hilm_q\cong \Hilm_{pq}$ subject to certain axioms. It is equivalent to a single correspondence when the underlying semigroup is~$\NN$~\cite{Albandik-Meyer:Colimits}. Product systems were introduced in this context by Fowler in~\cite{Fowler:Product_systems}, following the work of Arveson on continuous product systems of Hilbert spaces developed in~\cite{Arveson:Continuous_Fock}. As for single correspondences, examples of product systems arise naturally from semigroups of endomorphisms~\cite{Fowler:Product_systems,Larsen:Crossed_abelian}.
 
 Fowler defined the Toeplitz algebra~$\Toep_{\Hilm}$ of a given product system~$\Hilm$ as the universal $\Cst$\nb-algebra for representations of~$\Hilm$, thus generalising Toeplitz algebras of single correspondences. However, unlike the case of single correspondences, the Toeplitz algebra of a product system is in general too big. For example, the universal $\Cst$\nb-algebra for representations of the trivial bundle over~$\NN\times\NN$ is not nuclear\footnote{A $\Cst$\nb-algebra~$A$ is \emph{nuclear} if for every $\Cst$\nb-algebra~$B$ there exists a unique~$\Cst$\nb-norm on the tensor product~$A\odot B$.}(see~\cite{murphy1996}). This is precisely the universal $\Cst$\nb-algebra generated by two commuting isometries. Fowler also constructed the Cuntz--Pimsner algebra of a product system~$\Hilm=(\Hilm_p)_{p\in P}$ as the universal $\Cst$\nb-algebra for representations that are Cuntz--Pimsner covariant on~$J_p=\varphi_p^{-1}(\Comp(\Hilm_p))$ for all~$p\in P$. As in Pimsner's original construction, Fowler's Cuntz--Pimsner algebra might be trivial if~$\Hilm$ is non-faithful.

Following the work on Toeplitz algebras associated to quasi-lattice ordered groups by Nica~\cite{Nica:Wiener--hopf_operators} and also Laca and Raeburn \cite{LACA1996415}, Fowler introduced and studied in~\cite{Fowler:Product_systems} the class of \emph{compactly aligned} product systems over semigroups arising  from quasi-lattice orders.  That is, $P$ is a subsemigroup of a group~$G$ and~$(G,P)$ is a quasi-lattice ordered group in the sense of Nica \cite{Nica:Wiener--hopf_operators}. For instance, a $k$\nb-graph gives rise to a compactly aligned product system over $\NN^k$ precisely when it is finitely aligned~\cite[Theorem 5.4]{Raeburn-Sims:Product_graphs}. Fowler~\cite{Fowler:Product_systems} built a $\Cst$\nb-algebra out of a compactly aligned product system that is universal for representations satisfying an extra condition, called \emph{Nica covariance}. The resulting $\Cst$\nb-algebra, known as a \emph{Nica--Toeplitz algebra}, is spanned by elements of the form $t(\Hilm_p)t(\Hilm_q)^*$ with $p,q$ in~$P$. Hence, it is much more tractable than the usual Toeplitz algebra~$\mathcal{T}_{\Hilm}$. For amenable systems, Fowler was able to characterise faithful representations of this algebra~\cite[Theorem 7.2]{Fowler:Product_systems}. Also under an amenability assumption, a result deriving nuclearity for a Nica--Toeplitz algebra from nuclearity of the underlying coefficient algebra was established in~\cite[Theorem 6.3]{Rennie_groupoidfell}.

However, the problem of finding a $\Cst$\nb-algebra that approximates the structure of a given compactly aligned product system in an optimal way has not been completely solved. The question is: for a compactly aligned product system~$\Hilm=(\Hilm_p)_{p\in P}$, which quotient of the Nica--Toeplitz algebra~$\mathcal{N}\Toep_{\Hilm}$ gives in an appropriate sense the smallest $\Cst$\nb-algebra so that the representation of~$\Hilm$ in the corresponding quotient remains injective? Under an amenability assumption, such a $\Cst$\nb-algebra would be a co-universal object in the sense of~\cite{Carlsen-Larsen-Sims-Vittadello:Co-universal} for gauge-compatible Nica covariant representations of~$\Hilm$.

Answering the above question was the main objective of the work of Sims and Yeend in~\cite{Sims-Yeend:Cstar_product_systems}. They were able to associate a $\Cst$\nb-algebra~$\mathcal{NO}_{\Hilm}$ to a given compactly aligned product system~$\Hilm$, called \emph{Cuntz--Nica--Pimsner algebra}, so that the universal representation of~$\Hilm$ in~$\mathcal{NO}_{\Hilm}$ is injective for a large class of product systems~\cite[Theorem 4.1]{Sims-Yeend:Cstar_product_systems}. This is a quotient of the Nica--Toeplitz algebra of~$\Hilm$. Their notion of covariant representations is more technical than the usual Cuntz--Pimsner covariance since it involves additional relations. Sims and Yeend proved that Cuntz--Nica--Pimsner algebras include Cuntz--Krieger algebras of finitely aligned higher-rank graphs~\cite[Proposition 5.4]{Sims-Yeend:Cstar_product_systems} and Katsura's relative Cuntz--Pimsner algebras of single correspondences~\cite[Proposition 5.3]{Sims-Yeend:Cstar_product_systems}. The analysis of co-universal properties for these algebras was provided in~\cite{Carlsen-Larsen-Sims-Vittadello:Co-universal}. If either~$\Hilm$ is faithful or the representation of~$\Hilm$ in~$\mathcal{NO}_{\Hilm}$ is injective and~$P$ is directed, then under an amenability assumption a gauge-compatible representation of~$\mathcal{NO}_{\Hilm}$ is faithful if and only if it is faithful on~$A$ \cite[Corollary 4.11]{Carlsen-Larsen-Sims-Vittadello:Co-universal}.

Even though the universal representation of a compactly aligned product system~$\Hilm$ in~$\mathcal{NO}_{\Hilm}$ is injective for many examples, it might fail to be faithful even for proper product systems over totally ordered semigroups such as the positive cone of~$\ZZ\times\ZZ$ with the lexicographic order~\cite[Example 3.16]{Sims-Yeend:Cstar_product_systems}. In addition, \cite[Example 3.9]{Carlsen-Larsen-Sims-Vittadello:Co-universal} shows that if~$P$ is not directed, a representation of~$\mathcal{NO}_{\Hilm}$ that is faithful on~$A$ need not be faithful even for an amenable system. Moreover, Cuntz--Nica--Pimsner algebras cannot handle product systems over semigroups that are not positive cones of quasi-lattice orders.

In this paper, we let~$P$ be a subsemigroup of a group~$G$ and construct a $\Cst$\nb-algebra~$A\times_{\Hilm}P$ from a product system~$\Hilm=(\Hilm_p)_{p\in P}$ satisfying the conditions~(A) and (B) of~\cite{Sims-Yeend:Cstar_product_systems}: the universal representation of~$\Hilm$ in~$A\times_{\Hilm}P$ is faithful and a representation of~$A\times_{\Hilm}P$ is faithful on the fixed-point algebra for the canonical gauge coaction of~$G$ if and only if it is faithful on~$A$. To do so, we look at the topological $G$\nb-grading~$\{\Toep_{\Hilm}^g\}_{g\in G}$ of the Toeplitz algebra of~$\Hilm$ coming from the canonical coaction of~$G$. Inspired by the notion of Cuntz--Nica--Pimsner covariance introduced by Sims and Yeend, we analyse a class of representations of~$\Toep^e_{\Hilm}$ coming from quotients of the usual Fock representation of~$\Hilm$ on~$\bigoplus_{\substack{p\in P}}\Hilm_p$. Following ideas of~\cite{Exel:New_look}, we use the directed set consisting of finite subsets of~$G$ to define what we call \emph{strong covariance}. Although it explicitly involves elements of~$G$, this notion of covariance does not depend on the embedding~$P\hookrightarrow G$. In other words, different groups containing~$P$ as a subsemigroup produce the same quotient of~$\Toep_{\Hilm}$. We refer to the universal $\Cst$\nb-algebra $A\times_{\Hilm}P$ for strongly covariant representations of~$\Hilm$ as its covariance algebra.

The notion of covariance introduced here is technical and in general difficult to verify. However, we present an equivalent and considerably simpler definition of strong covariance for compactly aligned product systems over quasi-lattice ordered groups. We show that~$A\times_{\Hilm}P$ coincides with the Cuntz--Nica--Pimsner algebra of~$\Hilm$ if either~$P$ is directed and the canonical representation of~$\Hilm$ in $\mathcal{NO}_{\Hilm}$ is injective or $\Hilm$ is faithful. This is precisely the hypothesis of~\cite[Proposition 3.7]{Carlsen-Larsen-Sims-Vittadello:Co-universal}. 

We prove that our construction includes Fowler's Cuntz--Pimsner algebra if~$\Hilm$ is a proper and faithful product system over a cancellative Ore monoid. Again only assuming that~$P$ is embeddable into a group, we construct a product system~$\Hilm$ as in~\cite[Section 5]{Albandik-Meyer:Product} so that~$A\times_{\Hilm}P$ recovers the semigroup $\Cst$\nb-algebra of Xin Li whenever the family of constructible right ideals of~$P$ is independent (see~\cite[Definition 2.26]{Li:Semigroup_amenability}). In general, the covariance algebra of such a product system corresponds to the semigroup $\Cst$\nb-algebra~${\Cst}^{(\cup)}_s(P)$ in the notation of~\cite{Li:Semigroup_amenability}. We also assume that~$P$ is a reversible cancellative semigroup and describe a class of Exel's crossed products by interaction groups as covariance algebras. Thus our approach may inspire further $\Cst$\nb-constructions for irreversible dynamical systems.

This paper is organised as follows. In Section \ref{sec:background}, we recall some basic definitions and describe the coaction of a group~$G$ containing~$P$ on the Toepliz algebra associated to a product system $\Hilm=(\Hilm_p)_{p\in P}$ as well as the corresponding topological $G$\nb-grading of~$\Toep_{\Hilm}$. In Section~\ref{sec:covariance}, we present our main theorem, namely, Theorem~\ref{thm:cuntz_pimsner_algebra}. We first define a certain gauge-invariant ideal~$J^ G_{\infty}$ of~$\Toep_{\Hilm}$. The corresponding quotient~$\Toep_{\Hilm}/J^G_{\infty}$ becomes the object of study. We show that~$A$ embeds into this quotient. Moreover, it carries a canonical topological $G$\nb-grading and also satisfies an analogue of condition~(B) mentioned previously. Applying this fact to the universal group of~$P$, we are able to show that such a quotient of~$\Toep_{\Hilm}$ is independent of the choice of the group containing~$P$ as a subsemigroup. The remaining facts to be proved are in Theorem \ref{thm:cuntz_pimsner_algebra}. We refer to this quotient of~$\Toep_{\Hilm}$ as the covariance algebra of~$\Hilm$.

In Section \ref{sec:examples}, we illustrate our construction by comparing it with other $\Cst$\nb-algebras arising from irreversible dynamical systems. We have included an appendix concerned with the topological grading coming from a discrete coaction.

\section{Product systems}\label{sec:background}
We recall some basic notions and results that will be needed in the sequel.
\subsection{Notation and basic notions} Let~$P$ be a semigroup with identity~$e$ and~$A$ a $\Cst$\nb-algebra. A \emph{product system} over $P$ of $A$\nb-correspondences consists of:
\begin{enumerate}
\item[(i)] a correspondence $\Hilm_p\colon A\leadsto A$ for each $p\in P\setminus\{e\}$;
\item[(ii)] correspondence isomorphisms $\mu_{p,q}\colon\Hilm_p\otimes_A\Hilm_q\overset{\cong}{\rightarrow}\Hilm_{pq}$, also called \emph{multiplication maps}, for all $p,q\in P\setminus\{e\}$;
\end{enumerate}

In addition, we let $\Hilm_e=A$ with the obvious structure of correspondence over~$A$. The multiplication maps $\mu_{e,p}$ and $\mu_{p,e}$ implement the left and right actions of~$A$ on~$\Hilm_p$, respectively, so that~$\mu_{e,p}(a\otimes\xi_p)=\varphi_p(a)\xi_p$ and $\mu_{p,e}(\xi_p\otimes a)=\xi_pa$ for all~$a\in A$ and $\xi_p\in\Hilm_p$. 

 This data must make the following diagram commute:
  \[
  \xymatrix{
    (\Hilm_p\otimes_A\Hilm_q)\otimes_A\Hilm_r  \ar@{->}[d]^{\mu_{p,q}\otimes1}
   \ar@{<->}[rr]& &     \Hilm_p\otimes_A(\Hilm_q\otimes_A\Hilm_r)
   \ar@{->}[rr]^{1\otimes\mu_{q,r}}&&
    \Hilm_p\otimes_A\Hilm_{qr} \ar@{->}[d]^{\mu_{p,qr}} \\
    \Hilm_{pq}\otimes_A\Hilm_r   \ar@{->}[rrrr]^{\mu_{pq,r}}&& &&
    \Hilm_{pqr}  
.  }
  \]
  
  A product system $\Hilm=(\Hilm_p)_{p\in P}$ will be called \emph{faithful} if $\varphi_p$ is injective for all $p\in P$. It is \emph{proper} if $A$ acts by compact operators on~$\Hilm_p$ for all~$p$ in~$P$. 
  
  \begin{defn}A \emph{Toeplitz representation} of $\Hilm=(\Hilm_p)_{p\in P}$ in a $\Cst$\nb-algebra $B$ consists of linear maps~$\psi_p\colon\Hilm_p\rightarrow B$, for all $p\in P\setminus\{e\},$ and a \Star homomorphism $\psi_e\colon A\rightarrow B$, satisfying the following two axioms:
  \begin{enumerate}
  \item[(T1)] $\psi_p(\xi)\psi_q(\eta)=\psi_{pq}(\xi\eta)$ for all $p,q\in P$, $\xi\in\Hilm_p$ and  $\eta\in\Hilm_q$;
  \item[(T2)] $\psi_p(\xi)^*\psi_p(\eta)=\psi_e(\braket{\xi}{\eta}),$ for all $p\in P$ and $\xi, \eta\in\Hilm_p$.  
  \end{enumerate}  
  \end{defn}
  
  The \emph{Toeplitz algebra} of~$\Hilm$, denoted by $\Toep_{\Hilm}$, is the universal $\Cst$\nb-algebra for Toeplitz representations of $\Hilm$ \cite{Fowler:Product_systems, Pimsner:Generalizing_Cuntz-Krieger}. 
  
 \subsection{The coaction on the Toeplitz algebra} Let~$G$ be a discrete group. Let $\delta_G$ be the \Star homomorphism $\Cst(G)\rightarrow \Cst(G)\otimes \Cst(G)$ defined by~$\delta_G(u_g)=u_g\otimes u_g$, where~$u_g$ denotes the image of~$g\in G$ under the canonical unitary representation of~$G$ in~$\Cst(G))$. A \emph{full coaction} of $G$ on a $\Cst$\nb-algebra~$A$ is a nondegenerate and injective \Star homomorphism $\delta\colon A\rightarrow A\otimes \Cst(G)$ such that $$(\delta\otimes\id_{\Cst(G)})\delta=(\id_A\otimes\delta_G)\delta.$$ The triple $(A,G,\delta)$ is referred to as a coaction. See, for instance, \cite[Definition A.21]{Echterhoff-Kaliszewski-Quigg-Raeburn:Categorical} and also~\cite{Quigg:Discrete_coactions_and_bundles}. Replacing~$\Cst(G)$ by~$\Cst_r(G)$ and adapting the coaction identity accordingly, we obtain what is called a \emph{reduced} coaction~\cite{Quigg:FullAndReducedCoactions}. Here we will only use full coactions. So we will omit the term ``full''. 
  
If~$(A,G,\delta)$ is a coaction, then $\delta$ provides~$B$ with a topological $G$\nb-grading for which the spectral subspace at $g\in G$ is given by $A_g=\{a\in A\mid \delta(a)=a\otimes u_g\}$. We refer to~$A_e$ as the \emph{fixed-point algebra} for the coaction of~$G$ on~$A$.

 The idea of considering coactions on Toeplitz algebras associated to product systems goes back to~\cite[Proposition 4.7]{Fowler:Product_systems} and also~\cite{Carlsen-Larsen-Sims-Vittadello:Co-universal} for Nica--Toeplitz algebras. Given a product system~$\Hilm=(\Hilm_p)_{p\in P}$ and a discrete group~$G$ with~$G\supseteq P$, we will need the topological $G$\nb-grading coming from the canonical coaction of~$G$ on the Toeplitz algebra~$\Toep_{\Hilm}$ in the subsequent section. Hence we begin with a description of such a coaction and the corresponding $G$\nb-grading of~$\Toep_{\Hilm}$.

Let $\Hilm=(\Hilm_p)_{p\in P}$ be a product system. Suppose that~$P$ is a subsemigroup of a group~$G$. There is a representation of~$\Hilm$ in $\Toep_{\Hilm}\otimes \Cst(G)$ which sends~$\xi_p\in\Hilm_p$ to $\tilde{t}(\xi_p)\otimes u_p$. By the universal property of~$\Toep_{\Hilm}$, this yields a \Star homomorphism $\widetilde{\delta}\colon\Toep_{\Hilm}\rightarrow\Toep_{\Hilm}\otimes \Cst(G)$.

\begin{lem}\label{lem:grading_description} The above \Star homomorphism $\widetilde{\delta}\colon\Toep_{\Hilm}\to\Toep_{\Hilm}\otimes \Cst(G)$ provides a full coaction of~$G$ on~$\Toep_{\Hilm}$. Moreover, the spectral subspace~$\Toep_{\Hilm}^g$ at~$g\in G$ associated to $\widetilde{\delta}$ is the closure of sums of elements of the form $$\tilde{t}(\xi_{p_1})\tilde{t}(\xi_{p_2})^*\ldots\tilde{t}(\xi_{p_{n-1}})\tilde{t}(\xi_{p_n})^*,$$ where $n\in \NN$, $p_1p_2^{-1}\ldots p_{n-1}p_n^{-1}=g$ and $\xi_{p_i}\in \Hilm_{p_i}$ for all $i\in\{1,2,\ldots,n\}$.
\end{lem}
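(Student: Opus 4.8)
The plan is to verify the coaction identity by appealing to the universal property of $\Toep_{\Hilm}$ twice, and then to identify the spectral subspaces by a direct spanning argument together with the conditional expectation supplied by the grading.

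First I would check that $\widetilde{\delta}$ is a coaction. Nondegeneracy of $\widetilde{\delta}$ follows because the elements $\tilde t(\xi_p)\otimes u_p$ generate $\Toep_{\Hilm}\otimes\Cst(G)$ as a $\Cst$-algebra (using that $P$ generates $G$ and that $\psi_e=\tilde t|_A$ is nondegenerate), so $\widetilde\delta(\Toep_{\Hilm})(1\otimes\Cst(G))$ is dense. For the coaction identity $(\widetilde\delta\otimes\id)\widetilde\delta=(\id\otimes\delta_G)\widetilde\delta$, I would observe that both sides are $\Star$-homomorphisms $\Toep_{\Hilm}\to\Toep_{\Hilm}\otimes\Cst(G)\otimes\Cst(G)$, and by universality it suffices to check they agree on the generating representation, i.e.\ on each $\tilde t(\xi_p)$. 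Both sides send $\tilde t(\xi_p)$ to $\tilde t(\xi_p)\otimes u_p\otimes u_p$, using $\delta_G(u_p)=u_p\otimes u_p$. Injectivity of $\widetilde\delta$ is automatic once we exhibit the spectral subspace decomposition, or alternatively follows from the existence of a faithful conditional expectation; I would note that a standard slice-map argument (apply $\id_{\Toep_{\Hilm}}\otimes f$ for $f$ the canonical trace-like state on $\Cst(G)$ dual to $e$) recovers $\id$ on the dense subalgebra spanned by the monomials, hence $\widetilde\delta$ is faithful.

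For the description of $\Toep_{\Hilm}^g$, recall that $\Toep_{\Hilm}$ is densely spanned by monomials $\tilde t(\xi_{p_1})^{\varepsilon_1}\cdots\tilde t(\xi_{p_k})^{\varepsilon_k}$ with $\varepsilon_i\in\{1,*\}$; using (T2) to absorb any $\tilde t(\xi)^*\tilde t(\eta)=\tilde t(\langle\xi,\eta\rangle)$ into the coefficient algebra, and (T1) to merge adjacent factors of the same type, every such monomial can be rewritten as a sum of elements of the normal form $\tilde t(\xi_{p_1})\tilde t(\xi_{p_2})^*\cdots\tilde t(\xi_{p_{n-1}})\tilde t(\xi_{p_n})^*$ (allowing some $p_i=e$). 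Under $\widetilde\delta$ such a normal-form element is an eigenvector with eigenvalue $u_{p_1}u_{p_2}^{-1}\cdots u_{p_{n-1}}u_{p_n}^{-1}$, so it lies in $\Toep_{\Hilm}^g$ with $g=p_1p_2^{-1}\cdots p_{n-1}p_n^{-1}$. This shows the closed span $S_g$ of such normal-form elements is contained in $\Toep_{\Hilm}^g$. For the reverse inclusion, I would use that the grading comes with a faithful conditional expectation $E_g\colon\Toep_{\Hilm}\to\Toep_{\Hilm}^g$ onto each spectral subspace that is continuous and kills monomials of other degrees; since $\Toep_{\Hilm}=\overline{\sum_g S_g}$ and $E_g$ maps this dense set into $S_g$, continuity gives $\Toep_{\Hilm}^g=E_g(\Toep_{\Hilm})\subseteq S_g$.

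The main obstacle is the bookkeeping in the normal-form reduction: one must be careful that applying (T2) in the middle of a long word genuinely produces elements of the coefficient algebra that can then be pushed into neighbouring generators via the module actions $\varphi_p$ and the right action, and that no degree is changed in the process — each such move replaces $\cdots\tilde t(\xi)^*\tilde t(\eta)\cdots$ (degree contribution $p^{-1}q$ with $p,q$ the relevant indices) by $\tilde t(\langle\xi,\eta\rangle_A\cdot)$, and one checks the total group element is preserved because $p^{-1}q$ collapses correctly when $\xi,\eta$ lie in the same fibre. The rest is routine, relying on the general fact (recorded in the appendix on topological gradings from discrete coactions) that a discrete coaction always yields a topological grading with faithful expectations onto the spectral subspaces.
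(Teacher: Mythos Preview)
Your overall strategy for the spectral-subspace description matches the paper's: approximate an arbitrary element of $\Toep_{\Hilm}^g$ by finite sums of monomials and apply the contractive projection $\widetilde{\delta}_g$ onto the $g$-component to kill the wrong-degree terms. That part is fine, though note that (T2) only gives $\tilde t(\xi)^*\tilde t(\eta)=\tilde t_e(\langle\xi,\eta\rangle)$ when $\xi,\eta$ lie in the \emph{same} fibre; your discussion of ``absorbing $\tilde t(\xi_p)^*\tilde t(\eta_q)$ into the coefficient algebra'' for $p\neq q$ is incorrect and in fact unnecessary, since the normal form in the statement already allows alternating factors from different fibres. One simply uses (T1) to merge adjacent like factors and pads with $\tilde t_e(a)$, $\tilde t_e(a)^*$ at the ends via Cohen factorisation.

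There are, however, two genuine errors in the coaction part. First, your nondegeneracy argument is wrong: the elements $\tilde t(\xi_p)\otimes u_p$ do \emph{not} generate $\Toep_{\Hilm}\otimes\Cst(G)$ as a $\Cst$-algebra---they generate only the image of $\widetilde{\delta}$, which is a proper subalgebra whenever $G$ is nontrivial (and $P$ need not generate $G$ in any case). The paper instead takes an approximate unit $(u_\lambda)$ for $A$, observes that $(\tilde t_e(u_\lambda))$ is an approximate unit for $\Toep_{\Hilm}$ by nondegeneracy of the left actions, and checks that $\widetilde{\delta}(\tilde t_e(u_\lambda))=\tilde t_e(u_\lambda)\otimes u_e$ acts as an approximate unit on $\Toep_{\Hilm}\otimes\Cst(G)$. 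Second, your injectivity argument fails: slicing with the canonical trace $\tau$ on $\Cst(G)$ gives $(\id\otimes\tau)\widetilde{\delta}(b)=b$ only for $b$ of degree $e$ and zero otherwise, so it does \emph{not} recover $\id$. The paper uses the trivial character $1_G\colon\Cst(G)\to\CC$, $u_g\mapsto 1$, for which $(\id\otimes 1_G)\circ\widetilde{\delta}=\id_{\Toep_{\Hilm}}$ on the nose; this is the clean one-line argument you want.
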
 
\begin{proof} We begin by proving that $\widetilde{\delta}$ is nondegenerate.  Let $(u_{\lambda})_{\lambda\in\Lambda}$ be an approximate identity for~$A$. For each $p\in P$, both the left and right actions of~$A$ on~$\Hilm_p$ are nondegenerate. Consequently, $\big(\widetilde{t}_e(u_{\lambda})\big)_{\lambda\in\Lambda}$ is an approximate unit for~$\Toep_{\Hilm}$. Hence its image under~$\widetilde{\delta}$ satisfies, for all $b\in\Toep_{\Hilm}$ and $g\in G$, $$\lim_{\substack{\lambda}}\widetilde{\delta}(\widetilde{t}_e(u_{\lambda}))(b\otimes u_g)=\lim_{\substack{\lambda}}\,(\widetilde{t}_e(u_{\lambda})\otimes u_e)(b\otimes u_g)=\lim_{\substack{\lambda}}\widetilde{t}_e(u_{\lambda})b\otimes u_g=b\otimes u_g.$$ This guarantees that $\widetilde{\delta}$ is nondegenerate. In addition, for all $p\in P$, we have $$(\widetilde{\delta}_p\otimes\id_{\Cst(G)})\widetilde{\delta}_p=(\id_{\Toep_{\Hilm}}\otimes\delta_G)\widetilde{\delta}_p$$ on $\widetilde{t}(\Hilm_p)$. Thus $\widetilde{\delta}$ satisfies the coaction identity on~$\Toep_{\Hilm}$ as well, because it is generated by~$\widetilde{t}(\Hilm)$ as a $\Cst$\nb-algebra.

It remains to prove that $\widetilde{\delta}$ is injective. Indeed, let $1_G\colon G\to\CC,$ $g\mapsto 1$ be the trivial group homomorphism. Then $(\id_{\Toep_{\Hilm}}\otimes1_G)\circ\widetilde{\delta}=\id_{\Toep_{\Hilm}}$ if we identify~$\Toep_{\Hilm}$ with~$\Toep_{\Hilm}\otimes\CC$ in the canonical way. So~$\widetilde{\delta}$ is injective. Hence~$\widetilde{\delta}$ is a full coaction of~$G$ on the Toeplitz algebra of~$\Hilm$. 

 Now let~$\Toep^g_{\Hilm}$ be the spectral subspace at~$g\in G$ for~$\widetilde{\delta}$ and let~$\widetilde{\delta}_g$ denote the projection of~$\Toep_{\Hilm}$ onto~$\Toep_{\Hilm}^g$ as in \cite[Lemma 1.3]{Quigg:Discrete_coactions_and_bundles} (see also \cite[Corollary 19.6]{Exel:Partial_dynamical}). Take~$b$ in~$\Toep^g_{\Hilm}$. Since~$\widetilde{\delta}_g$ is contractive and~$\Toep_{\Hilm}$ is generated by~$\tilde{t}(\Hilm)$ as a $\Cst$\nb-algebra, we may suppose that $$b=\overset{m}{\sum_{\substack{j=1}}}\,\tilde{t}(\xi_{p^j_{1}})\tilde{t}(\xi_{p^j_{2}})^*\ldots\tilde{t}(\xi_{p^j_{k_j-1}})\tilde{t}(\xi_{p^j_{k_j}})^*,$$ where $m, k_j\in\NN$ for all $j$ in $\{1,2,\ldots,m\}$ and $\xi_{p^j_{i}}\in \Hilm_{p^j_i}$. The assertion then follows from the fact that~$\widetilde{\delta}_g$ vanishes on any element of the form $$\tilde{t}(\xi_{p_1})\tilde{t}(\xi_{p_2})^*\ldots\tilde{t}(\xi_{p_{n-1}})\tilde{t}(\xi_{p_n} )^*$$ with $p_1{p_2}^{-1}\ldots p_{n-1}p_{n}^{-1}\neq g$.
\end{proof}

We will refer to the coaction obtained in the previous lemma as the \emph{generalised gauge coaction} of~$G$ on~$\Toep_{\Hilm}$.

\section{\texorpdfstring{$\Cst$\nb-}{C*-}algebras associated to product systems}\label{sec:covariance}

In this section, we combine ideas of Exel and Sims and Yeend (see \cite{Exel:New_look,Sims-Yeend:Cstar_product_systems}) to construct a $\Cst$\nb-algebra~$A\times_{\Hilm}P$ out of a product system~$\Hilm$ so that a representation of~$A\times_{\Hilm}P$ is faithful on its fixed-point algebra for the canonical coaction of a group containing~$P$ if and only if it is faithful on the coefficient algebra. Our results apply to product systems over semigroups that can be embedded into groups.

\subsection{Strongly covariant representations} We first introduce the notion of strongly covariant representations. Let~$P$ be a semigroup with unit~$e$. Assume that~$P$ is embeddable into a group. That is, there is a group~$G$ and an injective semigroup homomorphism~$\gamma\colon P\to G$. Fix a $\Cst$\nb-algebra~$A$ and a product system~$\Hilm=(\Hilm_p)_{p\in P}$ over~$A$.

 Let $F\subseteq G$ be a finite subset. We set $$K_F\coloneqq\underset{g\in F}{\bigcap}gP.$$ So $K_{\{e,g\}}\neq\emptyset$ if and only if~$g$ may be written as~$pq^{-1}$ for some $p,q\in P$. In addition, $K_{gF}=gK_{F}$ for all~$g\in G$, where~$$gF=\left\{gh\middle|\,h\in F\right\}.$$ If~$p\in P$ and~$p\in K_{\{p,g\}}$, then~$p=gq$ for some~$q\in P$, which implies~$g=pq^{-1}$. 
 
For each~$p\in P$ and each~$F\subseteq G$ finite, we define an ideal~$I_{p^{-1}(p\vee F)}\idealin A$ as follows. Given 
$g\in F$, we let
$$I_{p^{-1}K_{\{p,g\}}}\coloneqq\begin{cases}\underset{r\in K_{\{p,g\}}}{\bigcap}\ker\varphi_{p^{-1}r}&\text{if } K_{\{p,g\}}\neq\emptyset\text{ and }p\not\in K_{\{p,g\}},\\
A& \text{otherwise. }
\end{cases}$$ 
We then let $$I_{p^{-1}(p\vee F)}\coloneqq\underset{g\in F}{\bigcap}I_{p^{-1}K_{\{p,g\}}}.$$  This gives a new correspondence $\Hilm_{F}\colon A\leadsto A$ by setting \begin{equation}\label{eq:finite_set_correspondence}\Hilm_{F}\coloneqq\bigoplus_{\substack{p\in P}}\Hilm_pI_{p^{-1}(p\vee F)}.
\end{equation}

Finally, let $\Hilm^+_F$ denote the right Hilbert $A$\nb-module $\bigoplus_{\substack{g\in G}}\Hilm_{gF}$. For each $\xi\in \Hilm_p$, we define an operator $t_F^p(\xi)\in\Bound(\Hilm_F^+)$ so that it maps the direct summand~$\Hilm_{gF}$ into~$\Hilm_{pgF}$ for all~$g\in G$. Explicitly, $$t_F^p(\xi)(\eta_{r})\coloneqq\mu_{p,r}(\xi\otimes_A\eta_{r}),\quad\eta_{r}\in\Hilm_{r}I_{r^{-1}(r\vee gF)}.$$ This is well defined because $I_{r^{-1}(r\vee F)}=I_{(pr)^{-1}(pr\vee pF)}$ for each
$F\subseteq G$ finite and each~$p\in P$. Its adjoint $t_F^p(\xi)^*$ sends $\mu_{p,r}(\zeta_p\otimes \eta_r)$ to $\varphi_r(\braket{\xi}{\zeta_p})\eta_r$. This is well defined because~$I_{s^{-1}(s\vee F)}=I_{s^{-1}p(p^{-1}s\vee p^{-1}F)}$ for all~$s\in pP$.  This gives a representation~$t_F=\{t_F^p\}_{p\in P}$ of~$\Hilm$ and hence a \Star homomorphism $\Toep_{\Hilm}\rightarrow\Bound(\Hilm_F^+),$ which we still denote by~$t_F$.

Let us denote by $Q^F_g$ the projection of $\Hilm_F^+$ onto the direct summand $\Hilm_{gF}$. Then $$t_F^p(\xi)Q^F_g=Q^F_{pg}t_F^p(\xi),\quad t_F^p(\xi)^*Q^F_g=Q^F_{p^{-1}g}t_F^p(\xi)^*$$ for all $p\in P$. Set $$\Toep^{e,F}_{\Hilm}\coloneqq Q^F_et_F(\Toep_{\Hilm}^e)Q^F_e,$$ where~$\Toep_{\Hilm}^e$ is the fixed-point algebra of~$\Toep_{\Hilm}$ for the gauge coaction of~$G$. If~$F_1\subseteq F_2$ are finite subsets of~$G$, then $$I_{p^{-1}(p\vee F_1)}\supseteq I_{p^{-1}(p\vee F_2)}$$ for all~$p\in P$. Hence~$\Hilm_{F_2}$ may be regarded as a closed submodule of~$\Hilm_{F_1}$. The restriction of~$Q^{F_1}_et_{F_1}(\Toep_{\Hilm}^e)Q^{F_1}_e$ to~$\Hilm_{F_2}$ gives a \Star homomorphism $$t_{F_1,F_2}\colon\Toep^{e,F_1}_{\Hilm}\rightarrow\Toep^{e,F_2}_{\Hilm}$$ such that~$\left(t_{F_1,F_2}\circ t_{F_1}\right)(b)=t_{F_2}(b)$ on~$\Hilm_{F_2}$ for all~$b\in\Toep_{\Hilm}^e$. For $F_1\subseteq F_2\subseteq F_3$, we have $t_{F_2,F_3}\circ t_{F_1,F_2}=t_{F_1,F_3}$. So we let~$F$ range in the directed set determined by all finite subsets of~$G$ and define an ideal~$J_e\idealin\Toep_{\Hilm}^e$ by $$J_e\coloneqq\left\{b\in\Toep_{\Hilm}^e\middle|\;\lim_{\substack{F}}\|b\|_F=0\right\},$$ where $\|b\|_F\coloneqq \|Q_F^et_F(b)Q_F^e\|$. We are now ready to introduce our notion of strong covariant representations.

\begin{defn}\label{def:strong_covariance} We will say that a representation of~$\Hilm$ is \emph{strongly covariant} if it vanishes on~$J_e$.

Let $J_\infty\idealin\Toep_{\Hilm}$ be the ideal generated by~$J_e$. Then~$\Toep_{\Hilm}\big/J_{\infty}$ is universal for strongly covariant representations of~$\Hilm$.
\end{defn}

The idea behind~\eqref{eq:finite_set_correspondence} started from the realisation that the correspondences~$\widetilde{\Hilm}_p$'s built in \cite{Sims-Yeend:Cstar_product_systems} out of~$\Hilm$ could be replaced by the~$\Hilm_F$'s in order to give the same notion of covariant representations if~$\Hilm$ is compactly aligned and $\widetilde{\phi}$\nb-injective and~$P$ is directed.  This is shown in Proposition~\ref{prop:Cuntz-Nica-Pimsner}. In this case, it suffices to consider finite subsets of~$P$ because~$(G,P)$ is quasi-lattice ordered. Exel constructed a $\Cst$\nb-algebra out of a nondegenerate interaction group~$(A,G,V)$ with the property that a representation of this crossed product is faithful on the fixed-point algebra for the canonical coaction of~$G$ if and only if it is faithful on~$A$. To show that~$A$ embeds into the crossed product, he built a faithful covariant representation by using inductive limits over finite subsets of~$G$ (see \cite[Section 9]{Exel:New_look}). This is related to product systems because, in fact, the main purpose in~\cite{Exel:New_look} was to introduce a new notion of crossed products by semigroups of unital and injective endomorphisms which can be enriched to interaction groups. Here we want to associate a $\Cst$\nb-algebra to a product system~$\Hilm=(\Hilm_p)_{p\in P}$ with the property that a representation of this resulting $\Cst$\nb-algebra is faithful on the fixed-point algebra for the canonical coaction of a group containing~$P$ if and only if it is faithful on~$A$. To achieve this goal, we believe its unit fibre should be a direct limit of $\Cst$\nb-algebras with injective connecting maps (see~\cite{Albandik-Meyer:Product,Exel:New_look,Kwasniewski-Szymanski:Ore,Pimsner:Generalizing_Cuntz-Krieger}), although in general this fact is not established. So, combining all these ideas and modifying the Cuntz--Nica--Pimsner covariance condition accordingly, we arrived at the~$\Hilm_F$'s and Definition~\ref{def:strong_covariance}.

Our next immediate goal is to prove that $A$ embeds into the quotient~$\Toep_{\Hilm}\big/J_{\infty}$.

\begin{lem}\label{lem:ideal_description} The ideal $J_{\infty}$ coincides with~$\overline{\underset{g\in G}{\bigoplus}}\Toep^g_{\Hilm}J_e.$ As a consequence, $$J_{\infty}=\overline{\underset{g\in G}{\bigoplus}}(J_{\infty}\cap\Toep^g_{\Hilm}).$$
\end{lem}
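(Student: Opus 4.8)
The inclusion $\overline{\bigoplus_{g\in G}}\Toep^g_{\Hilm}J_e\subseteq J_\infty$ is immediate, since $J_\infty$ is an ideal containing $J_e$ and hence contains $\Toep_{\Hilm}J_e\Toep_{\Hilm}$, which contains each $\Toep^g_{\Hilm}J_e$; closedness of $J_\infty$ does the rest. For the reverse inclusion the key point is that the linear span $\sum_{g\in G}\Toep^g_{\Hilm}J_e$ is already a two-sided ideal of $\Toep_{\Hilm}$, so that its closure equals $J_\infty$. The plan is therefore: first show that $\bigoplus_{g\in G}\Toep^g_{\Hilm}J_e$ is closed under left and right multiplication by the generators $\tilde t(\xi_p)$ and $\tilde t(\xi_p)^*$; then invoke the grading decomposition from Lemma~\ref{lem:grading_description} to conclude it is a two-sided ideal; and finally, once $J_\infty=\overline{\bigoplus_{g\in G}}\Toep^g_{\Hilm}J_e$ is established, deduce the second assertion by observing that the summand at $g$ lands inside $\Toep^g_{\Hilm}$ and invoking the topological $G$\nb-grading of $\Toep_{\Hilm}$.

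In more detail, I would fix $b\in J_e\subseteq\Toep^e_{\Hilm}$ and $c\in\Toep^h_{\Hilm}$ for some $h\in G$, so $cb\in\Toep^h_{\Hilm}\cdot\Toep^e_{\Hilm}\subseteq\Toep^h_{\Hilm}$; but I also need $cb$ to lie in $\Toep^h_{\Hilm}J_e$, which is automatic since $b\in J_e$. The substantive content is the other side: I must check that $b\Toep^h_{\Hilm}\subseteq\Toep^{h^{-1}}_{\Hilm}J_e$ — equivalently, that $J_e$ is invariant under the maps $c\mapsto$ (the $g$\nb-component of) $\tilde t(\xi_p)^*\,b\,\tilde t(\eta_q)$ — or, more efficiently, argue that $\sum_g\Toep^g_{\Hilm}J_e$ is a \emph{right} ideal (it is a left ideal for the trivial reason just noted combined with $J_e\Toep^e_{\Hilm}\subseteq J_e$, using that $J_e$ is an ideal of $\Toep^e_{\Hilm}$). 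To get the right-ideal property one uses that $J_e$ is defined via $\|b\|_F=\|Q^e_Ft_F(b)Q^e_F\|$ and the covariance relations $t_F^p(\xi)Q^F_g=Q^F_{pg}t_F^p(\xi)$, $t_F^p(\xi)^*Q^F_g=Q^F_{p^{-1}g}t_F^p(\xi)^*$: for $b\in J_e$ and a homogeneous element $\tilde t(\xi_{p_1})\tilde t(\xi_{p_2})^*\cdots$ of degree $h$, conjugating by $Q^e_F$ and using these intertwining relations shows the compression of $t_F(b\cdot(\text{that element}))$ is dominated in norm by $\|Q^e_{p_1\cdots F}t_F(b)\|$ times a bounded factor, which tends to $0$ along $F$; hence $b\cdot(\text{homogeneous element of degree }h)\in J_e\cdot\Toep^h_{\Hilm}$ up to the appropriate degree bookkeeping, landing in $\Toep^h_{\Hilm}J_e$ after taking adjoints. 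I expect this norm estimate — tracking how the Fock-type representations $t_F$ behave under the grading shift and producing the factor that vanishes along the net — to be the main obstacle; everything else is bookkeeping with the $G$\nb-grading.

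Once $J_\infty=\overline{\bigoplus_{g\in G}}\Toep^g_{\Hilm}J_e$ is in hand, the consequence is formal: each $\Toep^g_{\Hilm}J_e$ is contained in $\Toep^g_{\Hilm}$, so $J_\infty$ is the closure of a direct sum of subspaces $\Toep^g_{\Hilm}J_e\subseteq J_\infty\cap\Toep^g_{\Hilm}$; conversely, applying the grading projection $\widetilde\delta_g$ (which is contractive and restricts to the identity on $\Toep^g_{\Hilm}$) to an element of $J_\infty$ written as a limit of finite sums $\sum_g x_g$ with $x_g\in\Toep^g_{\Hilm}J_e$ shows $J_\infty\cap\Toep^g_{\Hilm}=\overline{\Toep^g_{\Hilm}J_e}$, and the decomposition $J_\infty=\overline{\bigoplus_{g\in G}}(J_\infty\cap\Toep^g_{\Hilm})$ follows from the fact that $J_\infty$, being a $\widetilde\delta$\nb-invariant ideal, inherits the topological $G$\nb-grading of $\Toep_{\Hilm}$ (cf.\ the appendix on gradings from discrete coactions).
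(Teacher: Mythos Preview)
Your overall plan matches the paper's: the crux is indeed the inclusion $J_e\Toep^h_{\Hilm}\subseteq\Toep^h_{\Hilm}J_e$ for every~$h$, after which everything follows formally from the grading.  But the argument you sketch for this key step does not go through as written.

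First, the bookkeeping is off: $b\Toep^h_{\Hilm}$ has degree~$h$, so the target must be $\Toep^h_{\Hilm}J_e$, not $\Toep^{h^{-1}}_{\Hilm}J_e$.  More seriously, ``taking adjoints'' does not give what you claim.  The adjoint of $J_e\Toep^h_{\Hilm}$ is $\Toep^{h^{-1}}_{\Hilm}J_e$, which already sits trivially in your sum; it does not put $bc_h$ into $\Toep^h_{\Hilm}J_e$.  Likewise, the norm you propose to estimate, namely $\|Q^F_e t_F(bc_h)Q^F_e\|$, is automatically zero for $h\neq e$ because $bc_h$ has degree~$h$---so that computation proves nothing.

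The paper packages the same shifting idea correctly by working with the degree-$e$ element $c_h^*b^*bc_h$.  For $F\subseteq G$ finite and $F'\coloneqq h^{-1}F$, the element $c_h$ maps $\Hilm_{F'}$ into $\Hilm_{hF'}=\Hilm_F$, so
\[
\|c_h^*b^*bc_h\|_{F'}\;\leq\;\|c_h\|^2\,\|b\|_F^2\;\longrightarrow\;0.
\]
Hence $(J_e\Toep^h_{\Hilm})^*(J_e\Toep^h_{\Hilm})\subseteq J_e$.  The passage from this to $J_e\Toep^h_{\Hilm}\subseteq\Toep^h_{\Hilm}J_e$ is \emph{not} immediate: it uses the Hilbert-module fact that, viewing $\Toep^h_{\Hilm}$ as a right Hilbert $\Toep^e_{\Hilm}$-module via $\braket{x}{y}=x^*y$, one has $\Toep^h_{\Hilm}J_e=\{x\in\Toep^h_{\Hilm}:x^*x\in J_e\}$ (this is \cite[Lemma~3.5]{Pimsner:Generalizing_Cuntz-Krieger}).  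That lemma is the missing link in your argument---your norm estimate can only ever tell you that $x^*x\in J_e$, and you need a separate step to factor $x$ as (something in $\Toep^h_{\Hilm}$)$\cdot$(something in $J_e$).

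Your treatment of the ``consequence'' part is fine and coincides with the paper's: continuity of the grading projections forces $J_\infty\cap\Toep^g_{\Hilm}=\Toep^g_{\Hilm}J_e$.
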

\begin{proof} In order to prove the first assertion, it suffices to show that $J_e\Toep^g_{\Hilm}\subseteq\Toep^g_{\Hilm}J_e$ for all~$g\in G$. To do so, let~$b\in J_e$ and~$0\neq c_g\in\Toep^g_{\Hilm}$. Let~$\varepsilon>0$. There is~$F\subseteq G$ finite with $\|b\|_S<\dfrac{\varepsilon}{\|c_g\|}$ for all finite subsets~$S$ of~$G$ with~$S\supseteq F$ because $b\in J_e$. Set $$F'\coloneqq g^{-1}F=\{g^{-1}h\mid h\in F\}.$$ Since $c_g$ maps $\Hilm_{F'}$ into $\Hilm_{gF'}=\Hilm_F$, it follows that~$\|c_g^*b^*bc_g\|_{F'}<\varepsilon^2$.
This guarantees that $$(J_e\Toep^g_{\Hilm})^*(J_e\Toep^g_{\Hilm})\subseteq J_e.$$ Hence~$J_e\Toep^g_{\Hilm}\subseteq\Toep^g_{\Hilm}J_e$ (see, for example, \cite[Lemma 3.5]{Pimsner:Generalizing_Cuntz-Krieger}). Applying the first assertion and the continuity of the projection of~$\Toep_{\Hilm}$ onto~$\Toep^g_{\Hilm}$, we deduce that $J_{\infty}\cap\Toep^g_{\Hilm}=\Toep^g_{\Hilm}J_e$. This gives the last statement.  \end{proof}

\begin{lem} Let $q\colon\Toep_{\Hilm}\rightarrow\Toep_{\Hilm}/ J_{\infty}$ be the quotient map. There is a full coaction $\delta\colon\Toep_{\Hilm}/ J_{\infty}\rightarrow\Toep_{\Hilm}/J_{\infty}\otimes\Cst(G)$ satisfying $\delta\circ q=(q\otimes\id_{\Cst(G)})\circ\widetilde{\delta}$. Moreover, the spectral subspace for~$\delta$ at~$g\in G$  is canonically isomorphic to~$\Toep_{\Hilm}^g/\Toep_{\Hilm}^gJ_e.$ 
\begin{proof} Given~$c\in\Toep_{\Hilm}/J_{\infty}$, choose~$b\in\Toep_{\Hilm}$ with~$q(b)=c$ and set $$\delta(q(b))\coloneqq (q\otimes\id_{\Cst(G)})\widetilde{\delta}(b).$$ Lemma~\ref{lem:ideal_description} and Proposition~\ref{prop:induced_coaction} say that this is indeed a well-defined full coaction of~$G$ on~$\Toep_{\Hilm}/J_{\infty}$ . The equality $\delta\circ q=(q\otimes\id_{\Cst(G)})\circ\widetilde{\delta}$ follows from the definition of~$\delta$.

In order to prove the last assertion, let~$(\Toep_{\Hilm}/J_{\infty})^g$ denote the spectral subspace at~$g\in G$ for the coaction~$\delta$. Clearly, the map which sends~$b_g\in \Toep^g_{\Hilm}$ to~$q(b_g)$ vanishes on~$\Toep^g_{\Hilm}J_e$. Moreover, Lemma \ref{lem:ideal_description} implies that this produces an injective map from~$\Toep^g_{\Hilm}/\Toep^g_{\Hilm}J_e$ into~$(\Toep_{\Hilm}/J_{\infty})^g$. That it is also surjective follows by the same argument used in Lemma~\ref{lem:grading_description}.
\end{proof}
\end{lem}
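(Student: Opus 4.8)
The plan is to recognise $J_\infty$ as a coaction-invariant ideal for the generalised gauge coaction $\widetilde{\delta}$, so that $\widetilde{\delta}$ descends to $\Toep_{\Hilm}/J_\infty$, and then to read off the spectral subspaces. First I would note that $\Toep^g_{\Hilm}J_e\subseteq\Toep^g_{\Hilm}$, since $J_e\idealin\Toep^e_{\Hilm}$ and $\Toep^g_{\Hilm}\Toep^e_{\Hilm}\subseteq\Toep^g_{\Hilm}$; hence $\widetilde{\delta}(x)=x\otimes u_g$ for every $x\in\Toep^g_{\Hilm}J_e$. By Lemma~\ref{lem:ideal_description} we have $J_\infty=\overline{\bigoplus_{g\in G}}\,\Toep^g_{\Hilm}J_e$, so by linearity and continuity $\widetilde{\delta}(J_\infty)$ lies in the closed span of $\{x\otimes f : x\in J_\infty,\ f\in\Cst(G)\}$, which is contained in $\ker(q\otimes\id_{\Cst(G)})$. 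Consequently $b\mapsto(q\otimes\id_{\Cst(G)})\widetilde{\delta}(b)$ annihilates $J_\infty$, so it factors through $q$ to give a \Star homomorphism $\delta$ with $\delta\circ q=(q\otimes\id_{\Cst(G)})\circ\widetilde{\delta}$; surjectivity of $q$ makes $\delta$ the unique map with this property. That $\delta$ is in fact a full coaction then follows from Proposition~\ref{prop:induced_coaction}; alternatively one argues directly, deducing nondegeneracy from the fact that $q$ carries an approximate unit of $\Toep_{\Hilm}$ to one of the quotient, the coaction identity by composing both sides with the surjection $q$ and using the coaction identity for $\widetilde{\delta}$, and injectivity, as in Lemma~\ref{lem:grading_description}, from $(\id\otimes 1_G)\circ\delta=\id$.

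For the last assertion I would consider the map $\Toep^g_{\Hilm}\to(\Toep_{\Hilm}/J_\infty)^g$, $b_g\mapsto q(b_g)$. It does land in the spectral subspace at $g$ because $\delta(q(b_g))=(q\otimes\id_{\Cst(G)})(b_g\otimes u_g)=q(b_g)\otimes u_g$, and it kills $\Toep^g_{\Hilm}J_e\subseteq J_\infty$, so it induces $\overline{\iota}\colon\Toep^g_{\Hilm}/\Toep^g_{\Hilm}J_e\to(\Toep_{\Hilm}/J_\infty)^g$. The kernel of $\overline{\iota}$ is $(\Toep^g_{\Hilm}\cap J_\infty)/\Toep^g_{\Hilm}J_e$, which is trivial by the equality $\Toep^g_{\Hilm}\cap J_\infty=\Toep^g_{\Hilm}J_e$ proved inside the proof of Lemma~\ref{lem:ideal_description}; hence $\overline{\iota}$ is injective. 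For surjectivity I would use the projections $\widetilde{\delta}_g$ and $\delta_g$ of $\Toep_{\Hilm}$ and $\Toep_{\Hilm}/J_\infty$ onto their spectral subspaces at $g$: since these projections are canonically determined by the respective coactions and $q$ intertwines $\widetilde{\delta}$ and $\delta$, one gets $\delta_g\circ q=q\circ\widetilde{\delta}_g$, which is exactly the argument already used in Lemma~\ref{lem:grading_description}. Then any lift $b\in\Toep_{\Hilm}$ of $c\in(\Toep_{\Hilm}/J_\infty)^g$ satisfies $\widetilde{\delta}_g(b)\in\Toep^g_{\Hilm}$ and $q(\widetilde{\delta}_g(b))=\delta_g(q(b))=\delta_g(c)=c$, so $\overline{\iota}$ is onto and hence an isomorphism.

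The substantive input is Lemma~\ref{lem:ideal_description}: once we know that $J_\infty$ is the closure of the sum of its homogeneous components, the descent of the coaction is the standard quotient construction for graded ideals (Proposition~\ref{prop:induced_coaction}), and the identification of the spectral subspaces is formal. The only step that needs a little care is the compatibility $\delta_g\circ q=q\circ\widetilde{\delta}_g$ of the spectral projections with the quotient map, but this repeats the surjectivity argument from the proof of Lemma~\ref{lem:grading_description} verbatim, so it is not a genuine obstacle; accordingly I expect no part of this lemma to be hard once its inputs are in place.
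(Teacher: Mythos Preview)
Your proposal is correct and follows essentially the same approach as the paper: both invoke Lemma~\ref{lem:ideal_description} together with Proposition~\ref{prop:induced_coaction} to obtain the induced coaction, then use Lemma~\ref{lem:ideal_description} for injectivity of the spectral-subspace map and the argument of Lemma~\ref{lem:grading_description} for surjectivity. Your write-up is simply more explicit about the intermediate steps (the invariance of $J_\infty$, the intertwining $\delta_g\circ q=q\circ\widetilde{\delta}_g$), but the structure and key inputs are identical.
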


We will often use the above description of the $G$\nb-grading for~$\Toep_{\Hilm}/J_{\infty}$.

\begin{prop}\label{thm:strongly_covariant} The quotient map~$q\colon\Toep_{\Hilm}\rightarrow\Toep_{\Hilm}/J_{\infty}$ is injective on~$\tilde{t}(A)$.
\end{prop}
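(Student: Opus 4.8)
The goal is to show that $q$ is injective on $\tilde{t}(A)=\tilde{t}_e(A)\cong A$, equivalently that $\tilde{t}(A)\cap J_\infty=0$. By Lemma~\ref{lem:ideal_description}, $J_\infty\cap\Toep_{\Hilm}^e=\Toep_{\Hilm}^e J_e$, and since $\tilde{t}(A)\subseteq\Toep_{\Hilm}^e$, it suffices to prove that $\tilde{t}(A)\cap\overline{\Toep_{\Hilm}^e J_e}=0$, or more strongly that $J_e\cap\tilde{t}(A)=0$ and that $\tilde{t}(A)$ survives in $\Toep_{\Hilm}^e/\Toep_{\Hilm}^eJ_e$. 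The natural strategy is to exhibit, for a fixed finite $F\subseteq G$, a concrete faithful-enough representation of $\Toep_{\Hilm}^{e,F}$ — or better, to pass to the inductive limit $\varinjlim_F\Toep_{\Hilm}^{e,F}$ along the connecting maps $t_{F_1,F_2}$ and show that the composite $A\to\Toep_{\Hilm}^e\to\varinjlim_F\Toep_{\Hilm}^{e,F}$ is injective. Since $J_e$ is by definition the kernel of $\Toep_{\Hilm}^e\to\varinjlim_F\Toep_{\Hilm}^{e,F}$ (the elements whose norms $\|b\|_F$ go to zero), injectivity of this composite is exactly what is needed.

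\textbf{Key steps.} First I would isolate the image of $A$ inside each $\Toep_{\Hilm}^{e,F}=Q_e^F t_F(\Toep_{\Hilm}^e)Q_e^F$. Note $t_F^e$ restricted to $A$ acts on $\Hilm_F=\bigoplus_{p\in P}\Hilm_p I_{p^{-1}(p\vee F)}$ diagonally: $t_F^e(a)$ acts on the summand $\Hilm_p I_{p^{-1}(p\vee F)}$ by $\varphi_p(a)$, and $Q_e^F$ projects onto this whole module (since $\Hilm_{eF}=\Hilm_F$). So the image of $a$ in $\Toep_{\Hilm}^{e,F}$ is the operator $\bigoplus_p\varphi_p(a)|_{\Hilm_p I_{p^{-1}(p\vee F)}}$. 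In particular the summand indexed by $p=e$ is $\Hilm_e I_{e^{-1}(e\vee F)}$; one checks from the definitions that $e\in K_{\{e,g\}}$ for every $g$, so $I_{e^{-1}K_{\{e,g\}}}=A$ for all $g\in F$, whence $I_{e^{-1}(e\vee F)}=A$ and the $p=e$ summand is all of $\Hilm_e=A$. The action of $t_F^e(a)$ on this summand is $\varphi_e(a)\xi=a\xi$, i.e. left multiplication of $A$ on itself. Therefore $\|t_F^e(a)Q_e^F\xi\|\geq\|a\xi\|$ for $\xi$ in the $p=e$ summand, giving $\|a\|_F=\|Q_e^F t_F(a) Q_e^F\|\geq\|a\|_{\Bound(A)}=\|a\|$ for \emph{every} finite $F$. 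Hence $\lim_F\|a\|_F\geq\|a\|$, so $a\notin J_e$ unless $a=0$; in fact $\|a\|_F=\|a\|$ for all $F$ once one checks the reverse inequality (each $\varphi_p$ is contractive, and the $p=e$ block saturates the bound).

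\textbf{Promoting this to $J_\infty$.} Having shown $\tilde{t}(A)\cap J_e=\{0\}$, I must upgrade to $\tilde{t}(A)\cap J_\infty=\{0\}$. Here I use the $G$-grading: the preceding lemma gives $(\Toep_{\Hilm}/J_\infty)^e\cong\Toep_{\Hilm}^e/\Toep_{\Hilm}^eJ_e$, and the quotient map $\Toep_{\Hilm}^e\to\Toep_{\Hilm}^e/\Toep_{\Hilm}^eJ_e$ factors (after completion) through $\varinjlim_F\Toep_{\Hilm}^{e,F}$; since $A\hookrightarrow\Toep_{\Hilm}^e$ lands injectively in $\varinjlim_F\Toep_{\Hilm}^{e,F}$ by the norm estimate above (the connecting maps $t_{F_1,F_2}$ are isometric on the image of $A$ by the same $p=e$-block argument, so the inductive-limit norm of $\tilde t(a)$ equals $\|a\|$), the composite $A\to\Toep_{\Hilm}^e\to\Toep_{\Hilm}^e/\overline{\Toep_{\Hilm}^eJ_e}$ is injective. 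But $q|_{\Toep_{\Hilm}^e}$ is, under the grading identification, exactly this composite, so $q$ is injective on $\tilde t(A)$.

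\textbf{Main obstacle.} The delicate point is the bookkeeping that the $p=e$ summand of $\Hilm_F$ is honestly all of $A$ and that the restriction of $t_F^e(a)$ there is genuine left multiplication — one must verify $I_{e^{-1}(e\vee F)}=A$ carefully from the case distinction in the definition of $I_{p^{-1}K_{\{p,g\}}}$ (the relevant case is $p=e\in K_{\{e,g\}}$, giving $A$), and one must confirm that $Q_e^F t_F^e(a)Q_e^F$ really does not mix this summand destructively with the others. A secondary subtlety is checking that the connecting maps $t_{F_1,F_2}$ are isometric on $\tilde t(A)$ — equivalently that enlarging $F$ does not shrink the $p=e$ block, which is immediate since $I_{e^{-1}(e\vee F)}=A$ independently of $F$ — so that the inductive-limit argument goes through cleanly. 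Everything else is a routine application of the $G$-grading decomposition from Lemma~\ref{lem:ideal_description} and the spectral-subspace identification in the preceding lemma.
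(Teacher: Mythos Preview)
Your argument contains a genuine error at the ``key step''. You claim that $e\in K_{\{e,g\}}$ for every $g\in G$, and hence that $I_{e^{-1}(e\vee F)}=A$ for all finite $F$. But $K_{\{e,g\}}=P\cap gP$, and $e\in gP$ if and only if $g^{-1}\in P$, i.e.\ $g\in P^{-1}$. For $g\in G\setminus P^{-1}$ with $P\cap gP\neq\emptyset$ (for instance $g\in P\setminus\{e\}$ in any quasi-lattice order), the defining case distinction gives $I_{K_{\{e,g\}}}=\bigcap_{r\in P\cap gP}\ker\varphi_r$, which is typically a proper ideal --- indeed it is $\{0\}$ whenever some $\varphi_r$ with $r\in gP$ is injective. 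Concretely, take $P=\NN\subseteq\ZZ$ with a faithful correspondence $\Hilm_1$: for $F=\{1\}$ one has $I_{e\vee F}=\bigcap_{r\geq1}\ker\varphi_r=\{0\}$, so the $p=e$ summand of $\Hilm_F$ is $A\cdot\{0\}=\{0\}$, not $A$. Your lower bound $\|a\|_F\geq\|a\|$ therefore fails, and the whole argument collapses.

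This is exactly why the paper's proof is more elaborate. Rather than relying on the $p=e$ summand, the paper shows that for each nonzero $a\in A$ and each finite $F$ there is \emph{some} $r\in P$ with $\varphi_r(a)\neq0$ on $\Hilm_rI_{r^{-1}(r\vee F)}$. The argument is iterative: if $a$ kills $\Hilm_eI_{e\vee F}$ then $a\notin I_{K_{\{e,g_1\}}}$ for some $g_1\in F$, so $\varphi_{r_1}(a)\neq0$ for some $r_1\in K_{\{e,g_1\}}$; one then replaces $F$ by the strictly smaller set $F_1=\{g\in F: K_{\{r_1,g\}}\neq\emptyset,\ r_1\notin K_{\{r_1,g\}}\}$ and repeats. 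Finiteness of $F$ forces termination at some $r_j$ where $\varphi_{r_j}(a)$ acts nontrivially on the $r_j$-summand. Your reduction to $\tilde t(A)\cap J_e=\{0\}$ via the grading (the ``promoting to $J_\infty$'' paragraph) is correct, but the core norm estimate needs this more careful search for a good summand.
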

\begin{proof} We will show that~$\tilde{t}(A)\cap J_e=0$ in~$\Toep_{\Hilm}$. This implies the conclusion by the previous lemma.

Let~$F\subseteq G$ be finite and~$0\neq a\in A$. We claim that~$t_F^e(a)\neq 0$ on~$\Hilm_{F}$. Indeed, if~$a I_{e\vee F}\neq 0$ we are done. Otherwise, $a\not\in I_{K_{\{e,g_1\}}}$ for some~$g_1\in F$, because~$I_{e\vee F}=\bigcap_{\substack{g\in G}}I_{K_{\{e,g\}}}$. Since  $$I_{K_{\{e,g_1\}}}=\bigcap_{\substack{r\in g_1P\cap P}}\ker\varphi_r,$$ there exists~$r_1\in P\cap g_1P$ with~$\varphi_{r_1}(a)\neq 0$. Put $$F_1:=\left\{g\in F\middle|\, K_{\{r_1, g\}}\neq\emptyset\text{ and }r_1\not\in K_{\{r_1, g\}}\right\}.$$ Thus $g_1\not\in F_1$. So~$F_1\subsetneq F$ and
$\Hilm_{r_1}I_{r_1^{-1}(r_1\vee F)}=\Hilm_{r_1}I_{r_1^{-1}(r_1\vee F_1)}.$ Our claim is proved if~$\varphi_{r_1}(a)\neq 0$ on~$\Hilm_{r_1}I_{r_1^{-1}(r_1\vee F_1)}$. This is so, in particular, if~$F_1=\emptyset$ because~$\varphi_{r_1}(a)\neq 0$ on~$\Hilm_{r_1}$. Assume that~$a$ acts trivially on~$\Hilm_{r_1}I_{r_1^{-1}(r_1\vee F_1)}$. Then $$\braket{\varphi_{r_1}(a)\Hilm_{r_1}}{\varphi_{r_1}(a)\Hilm_{r_1}}\cap I_{r_1^{-1}(r_1\vee F_1)}=\{0\}.$$ Thus there exist~$g_2\in F_1$ and~$\xi\in\Hilm_{r_1}$ so that $\braket{\varphi_{r_1}(a)(\xi)}{\varphi_{r_1}(a)(\xi)}\not\in I_{r_1^{-1}(r_1\vee g_2)}$. As a consequence, one can find~$r_2\in K_{\{r_1,g_2\}}$ such that $$\braket{\varphi_{r_1}(a)(\xi)}{\varphi_{r_1}(a)(\xi)}\not\in\ker\varphi_{r_1^{-1}r_2}.$$ We see that~$a\not\in\ker\varphi_{r_2}$ because $\mu_{r_1,r_1^{-1}r_2}\colon\Hilm_{r_1}\otimes_A\Hilm_{r_1^{-1}r_2}\to\Hilm_{r_2}$ is an isomorphism of correspondences. Let $$F_2\coloneqq\left\{g\in F\middle|\; K_{\{r_2, g\}}\neq\emptyset\text{ and }r_2\not\in K_{\{r_2, g\}}\right\}.$$ Notice that~$F_2\subsetneq F_1$ and $$\Hilm_{r_2}I_{r_2^{-1}(r_2\vee F)}=\Hilm_{r_2}I_{r_2^{-1}(r_2\vee F_2)}.$$ If~$\varphi_{r_2}(a)$ vanishes on~$\Hilm_{r_2}I_{r_2^{-1}(r_2\vee F_2)}$, then the same reasoning as above yields~$g_3\in F_2$ and~$r_3\in K_{\{r_2,g_3\}}$ with~$\varphi_{r_3}(a)\neq 0$ on~$\Hilm_{r_3}$. Set $$F_3\coloneqq\left\{g\in F\middle|\; K_{\{r_3, g\}}\neq\emptyset\text{ and }r_3\not\in K_{\{r_3, g\}}\right\}.$$ We then have~$F_3\subsetneq F_2\subsetneq F_1\subsetneq F$ and $\Hilm_{r_3}I_{r_3^{-1}(r_3\vee F)}=\Hilm_{r_3}I_{r_3^{-1}(r_3\vee F_3)}$. This process cannot continue infinitely because~$F$ is finite. So we must stop at some~$r_j$ with~$\varphi_{r_j}(a)\neq0$ on~$\Hilm_{r_j}I_{r_j^{-1}(r_j\vee F)}$. 

Thus~$t_F^e(a)$ is nonzero on~$\Hilm_{F}$. Therefore, for all~$a\in A$, we have that $$\lim_{\substack{F}}\|a\|_F=\lim_{\substack{F}}\|a\|=\|a\|.$$ This completes the proof.\end{proof}

\subsection{The covariance algebra of a product system}

Our goal in this subsection is to associate a $\Cst$\nb-algebra $A\times_{\Hilm}P$ to a given product system~$(\Hilm_p)_{p\in P}$ satisfying two properties: the representation of~$\Hilm$ in~$A\times_{\Hilm}P$ is injective and any representation of~$A\times_{\Hilm}P$ in a $\Cst$\nb-algebra $B$ that is faithful on~$A$ is also faithful on the fixed-point algebra~$(A\times_{\Hilm}P)^{\delta}$ for the canonical gauge coaction of~$G$ on $A\times_{\Hilm}P$, where~$G$ is a group with~$P\subseteq G$. A candidate for~$A\times_{\Hilm}P$ is of course the universal $\Cst$\nb-algebra for strongly covariant representations introduced previously. We shall prove that this is independent of the choice of the group containing~$P$.

\begin{lem}\label{lem:pre_uniqueness_theorem} Let~$P$ be a subsemigroup of a group~$G$ and~$\Hilm$ a product system over~$P$. Let $\psi=\{\psi_p\}_{p\in P}$ be a strongly covariant representation of~$\Hilm$ in a $\Cst$\nb-algebra~$B$. The resulting \Star homomorphism $\widetilde{\psi}\colon\Toep_{\Hilm}\big/J_{\infty}\rightarrow B$ is faithful on~$\Toep^e_{\Hilm}/J_e$ if and only if~$\psi_e$ is injective.
\end{lem}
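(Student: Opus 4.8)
The plan is to exploit the $G$-grading on $\Toep_{\Hilm}/J_\infty$ together with a conditional-expectation (averaging) argument, exactly as in the classical gauge-invariant uniqueness theorems for Cuntz--Pimsner algebras. One direction is trivial: if $\widetilde{\psi}$ is faithful on $\Toep^e_{\Hilm}/J_e$, then since $\tilde t(A)$ embeds into $\Toep^e_{\Hilm}/J_e$ by Proposition~\ref{thm:strongly_covariant}, the restriction $\psi_e$ is injective on $A$. So the content is the converse: assuming $\psi_e$ injective, show $\widetilde{\psi}$ is isometric on the unit fibre $\Toep^e_{\Hilm}/J_e$.

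For the converse, the first step is to produce a faithful conditional expectation $E\colon \Toep_{\Hilm}/J_\infty \to \Toep^e_{\Hilm}/J_e$ coming from the coaction $\delta$: concretely $E = (\id\otimes 1_G)\circ\delta$ using the trivial character $1_G$ of $G$ (compare the injectivity argument in Lemma~\ref{lem:grading_description}), or equivalently the projection onto the $e$-fibre of the topological grading. This $E$ is faithful on positive elements because $\Toep_{\Hilm}/J_\infty = \overline{\bigoplus_g}(\Toep_{\Hilm}/J_\infty)^g$ as a topologically graded $\Cst$-algebra. The key point is then to build a compatible expectation on the target: since $\psi$ is a strongly covariant representation, it is in particular a Toeplitz representation, so the grading on $\Toep_{\Hilm}/J_\infty$ pushes forward and one checks that $\widetilde{\psi}$ intertwines $\delta$ with a coaction on the $\Cst$-subalgebra $C^*(\psi(\Hilm))\subseteq B$ — this is routine from the universal property since $\psi_p(\Hilm_p)$ should land in the appropriate spectral subspace. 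Denote by $E^B$ the associated faithful conditional expectation onto $\widetilde\psi(\Toep^e_{\Hilm}/J_e)$, so that $E^B\circ\widetilde\psi = \widetilde\psi\circ E$.

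Now the heart of the matter reduces to the unit fibre itself. Take $0\neq x\in\Toep^e_{\Hilm}/J_e$; we must show $\widetilde\psi(x)\neq 0$, and it suffices to treat $x\geq 0$. By the description of $\Toep^e_{\Hilm}/J_e$ as (a quotient of the fixed-point algebra, hence) a limit related to the operators $t_F$, and using that $\|x\| = \lim_F \|x\|_F$ is witnessed on some $\Hilm_F$, one should compress to a finite-subset level: the point of Definition~\ref{def:strong_covariance} is precisely that on the quotient the norm is $\lim_F\|Q^F_e t_F(\cdot)Q^F_e\|$, so a nonzero class $x$ has $\|x\|_F>0$ for all large $F$. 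Then, working inside $Q^F_e t_F(\Toep^e_{\Hilm})Q^F_e$, which acts on the Hilbert module $\Hilm_F$ built from the summands $\Hilm_p I_{p^{-1}(p\vee F)}$, I would use an approximation argument: any such $x$ is, up to $\varepsilon$, a finite sum of operators $t(\xi)t(\eta)^*$ with $\xi,\eta$ in fibres $\Hilm_p$, $\Hilm_q$ with $pq^{-1}=e$, i.e.\ $p=q$; applying $\psi$ and using the $\Cst$-identity together with axiom (T2), $\psi_p(\xi)^*\psi_p(\xi)=\psi_e(\braket{\xi}{\xi})$, one transports the norm of $x$ down to the injective $\psi_e$ on $A$. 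The main obstacle is exactly this last transport: making rigorous that the norm on $\Toep^e_{\Hilm}/J_e$ is computed faithfully by a representation that is injective on $A$ — i.e.\ that $\Toep^e_{\Hilm}/J_e$ is itself a ``gauge-invariant uniqueness'' object over $A$. I expect this to follow by recognizing $t_F$ (for each fixed $F$) as a genuine representation on a Hilbert $A$-module whose left-module structure, restricted to $\tilde t(A)$, is faithful (Proposition~\ref{thm:strongly_covariant}), hence isometric on the unit fibre of that representation; taking $F$ along the directed set and using continuity of the compressions $t_{F_1,F_2}$, the inductive-limit norm is then faithfully detected by any representation injective on $A$. Passing this through the intertwining $E^B\circ\widetilde\psi=\widetilde\psi\circ E$ with $E$, $E^B$ faithful then yields that $\widetilde\psi$ is faithful on all of $\Toep^e_{\Hilm}/J_e$, completing the proof.
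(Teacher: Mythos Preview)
There is a genuine gap. First, the conditional-expectation machinery you set up is beside the point: the lemma is already a statement about the unit fibre $\Toep^e_{\Hilm}/J_e$, so intertwining $E$ with an $E^B$ buys nothing extra; and in any case no coaction on~$B$ is assumed, so you cannot simply push $\delta$ forward to $C^*(\psi(\Hilm))$---that would require $\psi$ to be gauge-compatible, which the lemma does not suppose.

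The real issue is your treatment of ``the heart of the matter''. Two problems. (i)~Your approximation step is wrong in this generality: for an arbitrary $P\subseteq G$ the fixed-point algebra is \emph{not} spanned by simple products $t(\xi_p)t(\eta_p)^*$; one must allow arbitrary words $\tilde t(\xi_{p_1})\tilde t(\xi_{p_2})^*\cdots\tilde t(\xi_{p_{2k}})^*$ with $p_1p_2^{-1}\cdots p_{2k}^{-1}=e$ (Lemma~\ref{lem:grading_description}). (ii)~The ``transport of norm down to the injective $\psi_e$'' is asserted but not supplied, and it is exactly where the work lies. The paper's mechanism is specific: to each such monomial $b_j$ one attaches an explicit finite set $F_j\subseteq G$ (equation~\eqref{eq:associated_right_ideal}) and proves, using strong covariance of~$\psi$ via the auxiliary representation $t_{\{e\}}$, the vanishing
\[
\widetilde{\psi}(b_j)\,\psi_r\big(\Hilm_r I_{r^{-1}(r\vee F_j)}\big)=0\qquad\text{whenever }r\notin K_{F_j}.
\]
Only with this in hand can one take $F=\bigcup_j F_j$, a unit vector $\xi=\bigoplus_r\xi_r\in\Hilm_F$, and compute that $\big\|\sum_r\psi_r(\xi_r)^*\widetilde\psi(b')^*\widetilde\psi(b')\psi_r(\xi_r)\big\|$ coincides with $\|t_F(b')(\xi)\|^2$ (here injectivity of $\psi_e$ is used to identify the $A$-valued inner products). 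Your proposal contains neither the construction of $F_j$ nor the vanishing claim, and the appeal to ``$t_F$ is faithful on $A$, hence isometric on the unit fibre'' is circular: that is essentially the statement to be proved.
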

\begin{proof} If $\widetilde{\psi}$ is faithful on~$\Toep^e_{\Hilm}/J_e$, then Proposition~\ref{thm:strongly_covariant} implies that~$\psi_e$ is injective. Suppose that~$\psi$ is strongly covariant and~$\psi_e$ is faithful. Let us prove that~$\widetilde{\psi}$ is injective on~$\Toep^e_{\Hilm}/J_e$. First, pick $b\in \Toep^e_{\Hilm}$ of the form \begin{equation}\label{eq:generating_set}\tilde{t}(\xi_{p_1})\tilde{t}(\xi_{p_2})^*\ldots\tilde{t}(\xi_{p_{2k-1}})\tilde{t}(\xi_{p_{2k}})^*\end{equation} with $k\in \NN$, $p_1p_2^{-1}\ldots p_{2k-1}p_{2k}^{-1}=e$ and $\xi_{p_i}\in \Hilm_{p_i}$ for all $i\in\{1,2,\ldots,2k\}$. Assume that~$q(b)\neq 0$. This entails~$$K_{\{p_{2i},p_{2i+1}\}}=p_{2i}P\cap p_{2i+1}P\neq\emptyset$$ for each $i\in\{1,2,\ldots,k-1\}$ because, otherwise, $t_{F}\left(\tilde{t}(\xi_{p_{2i}})^*\tilde{t}(\xi_{p_{2i+1}})\right)$ acts trivially on~$\Hilm^+_F$, which would imply~$q(b)=0$. A similar argument employed to \begin{equation}\label{eq:associated_right_ideal}
F\coloneqq\left\{p_{2k}, p_{2k}p_{2k-1}^{-1}p_{2k-2},p_{2k}p_{2k-1}^{-1}p_{2k-2}p_{2k-3}^{-1}p_{2k-4},\ldots,p_{2k}p_{2k-1}^{-1}p_{2k-2}\cdots p_3^{-1}p_{2}\right\}\end{equation} shows that~$K_{F}\neq\emptyset$. This is precisely the right ideal $$p_{2k}p_{2k-1}^{-1}p_{2k-2}\cdots p_3^{-1}p_{2}P.$$ These ideals are used in~\cite{Li:Semigroup_amenability} to study semigroup $\Cst$\nb-algebras. 

We claim that, if~$r\not\in K_{F}$, then
$$\widetilde{\psi}(b)(\psi_r(\Hilm_rI_{r^{-1}(r\vee F)}))=0.$$ Since~$\psi$ is strongly covariant, it suffices to prove that $t_{\{e\}}(b)t^r_{\{e\}}\left(\Hilm_rI_{r^{-1}(r\vee F)}\right)$ vanishes on~$\Hilm_{\{e\}}^+$. First, notice that~$t_{\{e\}}(b)=0$ on~$\Hilm_sI_{s^{-1}(s\vee g)}$ whenever~$s\not\in K_F$. Hence if $K_{\{r,F\}}=\emptyset$, it follows that~$t_{\{e\}}(b)t^r_{\{e\}}(\Hilm_r)=\{0\}$ because the image of~$\Hilm_r$ under~$t^r_{\{e\}}$ sends~$\Hilm_sI_{s^{-1}(s\vee g)}$ to~$\Hilm_{rs}I_{s^{-1}r^{-1}(rs\vee rg)}$ and~$t_{\{e\}}(b)$ vanishes on the latter. We are then left with the case in which $K_{\{r,F\}}\neq\emptyset$. Thus~$r\not\in K_{F}$ implies that~$r\not\in K_{\{g\}}$ for some~$g\in F$. Let $\xi_r\in\Hilm_rI_{r^{-1}(r\vee F)}$. Then $t_{\{e\}}(b)t_{\{e\}}^r(\xi_r)$ vanishes on the direct summand~$\Hilm_sI_{s^{-1}(s\vee g)}$ if $rs\not\in K_{F}$. So assume that~$rs\in K_{F}$. In particular, $rs\in K_{\{r,g\}}.$ Hence $I_{r^{-1}(r\vee F)}\subseteq\ker\varphi_{s}$ and~$t_{\{e\}}^r(\xi_r)=0$ on the direct summand~$\Hilm_sI_{s^{-1}(s\vee g)}$. This concludes the proof that~$t_{\{e\}}(b)t_{\{e\}}^r(\xi_r)=0$ on~$\Hilm_{\{e\}}^+$. Therefore, $\widetilde{\psi}(b)\psi_r(\Hilm_rI_{r^{-1}(r\vee F)})=0$ as claimed.

Now let $b\in\Toep^e_{\Hilm}$ be such that $\widetilde{\psi}(b)=0$. Given $\varepsilon>0$, we must find a finite set $F\subseteq G$ such that $\|b\|_F<\varepsilon$. By Lemma~\ref{lem:grading_description}, there exists 
$b'=\sum_{j=1}^nb_j\in\Toep^e_{\Hilm}$ with $\|b-b'\|<\frac{\varepsilon}{2}$, where each~$b_j$ is of the form~\eqref{eq:generating_set}. For each $j\in\{1,\ldots,n\}$, let $F_j\subseteq G$ be the finite set associated to~$b_j$ as in~\eqref{eq:associated_right_ideal}. Thus $\widetilde{\psi}(b_j)(\psi_r(\Hilm_rI_{r^{-1}(r\vee F_j)}))=0$ if $r\not\in K_{F_j}$. We also set $$F\coloneqq\overset{m}{\underset{j=1}{\bigcup}}F_j$$ and let $\xi=\bigoplus_{r\in P}\xi_r\in\Hilm_F$ with $\|\xi\|_F\leq 1$, where $\xi_r=0$ except for finitely many $r$'s. Then $$\bigg\|\sum\psi_r(\xi_r)^*\widetilde{\psi}(b')^*\widetilde{\psi}(b')\psi_r(\xi_r)\bigg\|=\bigg\|\sum\psi_r(\xi_r)^*\widetilde{\psi}(b-b')^*\widetilde{\psi}(b-b')\psi_r(\xi_r)\bigg\|<\frac{\varepsilon^2}{4}.$$ Since $\psi_e$ is injective and $\widetilde{\psi}(b_j)\psi_r(\xi_r)=0$ if $r\not\in K_{F_j}$, it follows that the left-hand-side above is precisely $\|t_F(b')(\xi)\|^2.$ This implies that~$\|b\|_F<\varepsilon$. Hence~$b$ belongs to~$J_e$ as desired.
\end{proof}

\begin{lem}\label{lem:well_definedness} Let $G$ and $H$  be groups containing~$P$ as a subsemigroup and let~$\Hilm$ be a product system over~$P$. A representation of $\Hilm$ is strongly covariant as in Definition~\textup{\ref{def:strong_covariance}} with respect to~$G$ if and only if it is strongly covariant with respect to~$H$. Thus different groups provide the same notion of strong covariance.
\begin{proof} We may assume that $G=G(P)$ is the universal group of~$P$. By its universal property, there is a group homomorphism $\gamma\colon G\rightarrow H$ extending the embedding of~$P$ into~$H$. Let~$e_G$ and~$e_{H}$ denote the unit elements of~$G$ and $H$, respectively. Let~$\Toep^{e_G}_{\Hilm}$ be the fixed-point algebra for the generalised gauge coaction of~$G$ on~$\Toep_{\Hilm}$. It follows from Lemma~\ref{lem:grading_description} that~$\Toep^{e_G}_{\Hilm}$ is a $\Cst$\nb-subalgebra of~$\Toep^{e_H}_{\Hilm}$, where~$\Toep^{e_H}_{\Hilm}$, in turn, is the fixed-point algebra for the gauge coaction of~$H$ on~$\Toep_{\Hilm}$. Let us prove that~$J_{e_G}\subseteq J_{e_H}$, where~$J_{e_G}\idealin\Toep^{e_G}_{\Hilm}$ and~$J_{e_H}\idealin\Toep^{e_H}_{\Hilm}$ are the ideals described in the construction before Definition~\ref{def:strong_covariance} with respect to the groups~$G$ and~$H$, respectively. 

Indeed, it suffices to show that, given a finite set $F$ with $F\subseteq G$, one has $$I_{p^{-1}(p\vee F)}\supseteq I_{\gamma(p)^{-1}(\gamma(p)\vee \gamma(F))},$$ where $\gamma(F)$ is the range of~$F$ under~$\gamma$. To do so, let $g\in F$. If either $K_{\{p,g\}}=\emptyset$ or $p\in K_{\{p,g\}}$, then $I_{p^{-1}(p\vee g)}=A\supseteq I_{\gamma(p)^{-1}(\gamma(p)\vee \gamma(g))}$. 

Suppose that $K_{\{p,g\}}\neq\emptyset$ and $p\not\in K_{\{p,g\}}$. Given $r\in K_{\{p,g\}}$, $\gamma(r)\in K_{\{\gamma(p),\gamma(g)\}}$ so that $\ker\varphi_{p^{-1}r}\supseteq  I_{\gamma(p)^{-1}(\gamma(p)\vee \gamma(g))}$ because $\gamma$ is a group homomorphism. So we conclude that $I_{p^{-1}(p\vee F)}\supseteq I_{\gamma(p)^{-1}(\gamma(p)\vee \gamma(F))}$ and hence $J_{e_G}\subseteq J_{e_H}$ as asserted. 

Thus we obtain a \Star homomorphism~$\phi\colon\Toep_{\Hilm}/J^G_{\infty}\rightarrow\Toep_{\Hilm}/J^H_{\infty}$. Combining Proposition~\ref{thm:strongly_covariant} with Lemma~\ref{lem:pre_uniqueness_theorem}, we deduce that~$\phi$ is injective on~$\Toep^{e_G}_{\Hilm}/J_{e_G}$. To see that~$\phi$ is an isomorphism, we will show that~$J_{e_H}\subseteq J^G_{\infty}$.

First, let $b\in \Toep^{e_H}_{\Hilm}$ be of the form~\eqref{eq:generating_set}, that is, $$b=\tilde{t}(\xi_{p_1})\tilde{t}(\xi_{p_2})^*\ldots\tilde{t}(\xi_{p_{2k-1}})\tilde{t}(\xi_{p_{2k}})^*,$$ with~$k\in \NN$, $\gamma(p_1)\gamma(p_2)^{-1}\ldots \gamma(p_{2k-1})\gamma(p_{2k})^{-1}=e_H$ and~$\xi_{p_i}\in \Hilm_{p_i}$ for all~$i\in\{1,2,\ldots,2k\}$. We claim that~$p_1p_2^{-1}\ldots p_{2k-1}p_{2k}^{-1}\neq e_G$ in~$G$ entails~$b\in J_{e_H}\cap J^G_{\infty}$. To see this,  we will prove that~$K_F=K_{\gamma(F)}=\emptyset$, where $$F\coloneqq\{p_{2k}, p_{2k}p_{2k-1}^{-1}p_{2k-2},\ldots,p_{2k}p_{2k-1}^{-1}p_{2k-2}\cdots p_3^{-1}p_{2}\}$$ is the finite subset of~$G$ associated to~$b$. 

Let~$r\in K_{\gamma(F)}$.  Then there is a unique~$s_1\in P$ with~$r=p_{2k}s_1$. Here we have omitted~$\gamma$ because it is injective on~$P$. Now~$r$ also lies in $\gamma(p_{2k}p_{2k-1}^{-1}p_{2k-2})P$. So there is a unique $s_2\in P$ so that $r=\gamma(p_{2k}p_{2k-1}^{-1}p_{2k-2})s_2$. This implies that~$\gamma(p_{2k-1}^{-1}p_{2k-2})s_2=s_1$. This is so if and only if~$p_{2k-2}s_2=p_{2k-1}s_1$. Hence~$r=p_{2k}p_{2k-1}^{-1}p_{2k-2}s_2$ in~$G$ as well. Repeating this procedure, we deduce that~$r\in K_F$. Thus~$K_{\gamma(F)}=K_F$, since the inclusion $K_{F}\subseteq K_{\gamma(F)}$ is clear. 

It remains to show that~$K_F=K_{\gamma(F)}=\emptyset$. Let us argue by contradiction and suppose that~$K_F$ is a nonempty subset of~$G$. Hence one can find~$r,s\in P$ with $$p_{2k}p_{2k-1}^{-1}p_{2k-2}\cdots p_3^{-1}p_{2}p_1^{-1}(p_1s)=r.$$ Since~$\gamma$ is injective on~$P$ and $\gamma\left(p_{2k}p_{2k-1}^{-1}p_{2k-2}\cdots p_3^{-1}p_{2}p_1^{-1}\right)=e_H$, it follows that~$p_1s=r$. This gives~$$p_{2k}p_{2k-1}^{-1}p_{2k-2}\cdots p_3^{-1}p_{2}p_1^{-1}r=r$$ and thus~$p_{2k}p_{2k-1}^{-1}p_{2k-2}\cdots p_3^{-1}p_{2}p_1^{-1}=e_G$, contradicting our assumption. Therefore, $K_F=K_{\gamma(F)}=\emptyset$ and hence $b\in J_{e_H}\cap J^G_{\infty}.$ 

As a consequence, the image of~$\Toep^{e_H}_{\Hilm}$ under the quotient map $q\colon\Toep_{\Hilm}\rightarrow\Toep_{\Hilm}/J^G_{\infty}$ lies in the fixed-point algebra~$\Toep^{e_G}_{\Hilm}/J_{e_G}$. Since~$\phi$ is faithful on this latter $\Cst$\nb-algebra and the quotient map~$\Toep_{\Hilm}\rightarrow\Toep_{\Hilm}/J^H_{\infty}$ is precisely the composite~$\phi\circ q$, we conclude that~$J_{e_H}\subseteq J^G_{\infty}$. Therefore $J^G_{\infty}=J^H_{\infty}$. This shows that the notion of covariance described in Definition~\ref{def:strong_covariance} is independent of the choice of the group containing~$P$ as a subsemigroup.
\end{proof}
\end{lem}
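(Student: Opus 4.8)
The plan is to reduce to the case where $G$ is the universal group $G(P)$ of $P$: once strong covariance with respect to $G(P)$ is shown to coincide with strong covariance with respect to an arbitrary group containing $P$, the general statement follows by comparing any two such groups to $G(P)$. So fix the group homomorphism $\gamma\colon G(P)\to H$ extending the two embeddings of $P$, write $e_G$ and $e_H$ for the identity elements, and record from Lemma~\ref{lem:grading_description} that $\Toep^{e_G}_\Hilm$ is a $\Cst$\nb-subalgebra of $\Toep^{e_H}_\Hilm$, since $p_1p_2^{-1}\cdots p_{2k}^{-1}=e_G$ forces $\gamma(p_1)\gamma(p_2)^{-1}\cdots\gamma(p_{2k})^{-1}=e_H$. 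Because a representation of $\Hilm$ is strongly covariant with respect to a given group precisely when it vanishes on the ideal generated by the corresponding $J_e$, it is enough to prove that the ideals $J^G_\infty$ and $J^H_\infty$ of $\Toep_\Hilm$ coincide.

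For the inclusion $J^G_\infty\subseteq J^H_\infty$ I would compare the ideals $J_{e_G}\idealin\Toep^{e_G}_\Hilm$ and $J_{e_H}\idealin\Toep^{e_H}_\Hilm$ at the module level. The key estimate is the pointwise comparison $I_{p^{-1}(p\vee F)}\supseteq I_{\gamma(p)^{-1}(\gamma(p)\vee\gamma(F))}$ for every finite $F\subseteq G(P)$ and every $p\in P$, verified by examining each $g\in F$: the relevant factor of the left-hand ideal equals $A$ unless $K_{\{p,g\}}\neq\emptyset$ and $p\notin K_{\{p,g\}}$, and in that case one checks that $\gamma(p)\notin K_{\{\gamma(p),\gamma(g)\}}$ while every $r\in K_{\{p,g\}}$ has $\gamma(r)\in K_{\{\gamma(p),\gamma(g)\}}$ since $\gamma$ is a homomorphism, so $\ker\varphi_{p^{-1}r}$ already contains $I_{\gamma(p)^{-1}(\gamma(p)\vee\gamma(g))}$. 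Thus $\Hilm_{\gamma(F)}$ is a submodule of $\Hilm_F$ and, restricting the operators $t_F$, one gets $\|b\|_{\gamma(F)}\leq\|b\|_F$ for $b\in\Toep^{e_G}_\Hilm$; since these nets are decreasing, $b\in J_{e_G}$ implies $b\in J_{e_H}$, and therefore $J^G_\infty\subseteq J^H_\infty$. This yields a surjective \Star homomorphism $\phi\colon\Toep_\Hilm/J^G_\infty\to\Toep_\Hilm/J^H_\infty$; the quotient map onto $\Toep_\Hilm/J^H_\infty$ is strongly covariant with respect to $G(P)$ and injective on $\tilde{t}(A)$ by Proposition~\ref{thm:strongly_covariant}, so Lemma~\ref{lem:pre_uniqueness_theorem} shows $\phi$ is faithful on the fixed-point algebra $\Toep^{e_G}_\Hilm/J_{e_G}$.

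The reverse inclusion $J^H_\infty\subseteq J^G_\infty$ is the heart of the matter and rests on a combinatorial identity. Let $b=\tilde{t}(\xi_{p_1})\tilde{t}(\xi_{p_2})^*\cdots\tilde{t}(\xi_{p_{2k}})^*$ be a generator of $\Toep^{e_H}_\Hilm$, so that $\gamma(p_1)\gamma(p_2)^{-1}\cdots\gamma(p_{2k})^{-1}=e_H$, and let $F=\{p_{2k},\,p_{2k}p_{2k-1}^{-1}p_{2k-2},\,\ldots,\,p_{2k}p_{2k-1}^{-1}p_{2k-2}\cdots p_3^{-1}p_2\}\subseteq G(P)$ be the finite set attached to $b$ as in~\eqref{eq:associated_right_ideal}. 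I claim that if $p_1p_2^{-1}\cdots p_{2k}^{-1}\neq e_G$ then $K_F=K_{\gamma(F)}=\emptyset$. First one shows $K_F=K_{\gamma(F)}$: the inclusion $\subseteq$ is immediate, and conversely an $r\in K_{\gamma(F)}$ is rewritten step by step using the unique factorisations $r=\gamma(p_{2k})s_1=\gamma(p_{2k}p_{2k-1}^{-1}p_{2k-2})s_2=\cdots$ in $P$ (uniqueness from cancellativity of $P\subseteq H$ and injectivity of $\gamma$ on $P$) to exhibit $r\in K_F$. Then, arguing by contradiction, a nonempty $K_F$ produces $r,s\in P$ with $p_{2k}p_{2k-1}^{-1}\cdots p_2p_1^{-1}(p_1s)=r$; applying $\gamma$ and using $\gamma(p_{2k}p_{2k-1}^{-1}\cdots p_1^{-1})=e_H$ gives $p_1s=r$, whence $p_{2k}p_{2k-1}^{-1}\cdots p_1^{-1}=e_G$, contradicting the hypothesis. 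Emptiness of $K_{\gamma(F)}$ makes $b$ act trivially in the Fock picture over $H$, exactly as in the proof of Lemma~\ref{lem:pre_uniqueness_theorem}, so $b\in J_{e_H}$, while emptiness of $K_F$ gives $b\in J^G_\infty$.

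Combining this claim with Lemma~\ref{lem:grading_description}, the image of $\Toep^{e_H}_\Hilm$ under $q\colon\Toep_\Hilm\to\Toep_\Hilm/J^G_\infty$ lies in the fixed-point algebra $\Toep^{e_G}_\Hilm/J_{e_G}$. Since the quotient map onto $\Toep_\Hilm/J^H_\infty$ factors as $\phi\circ q$ and $\phi$ is injective on $\Toep^{e_G}_\Hilm/J_{e_G}$, every $b\in J_{e_H}$ is annihilated by $q$; hence $J_{e_H}\subseteq J^G_\infty$, so $J^H_\infty\subseteq J^G_\infty$ and $J^G_\infty=J^H_\infty$, which gives the lemma. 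The main obstacle is the identity $K_F=K_{\gamma(F)}=\emptyset$: establishing $K_F=K_{\gamma(F)}$ forces one to exploit uniqueness of factorisations in $P$ together with injectivity of $\gamma$ on $P$, and deducing emptiness requires the precise choice of $F$ in~\eqref{eq:associated_right_ideal}, designed so that the relation defining the $G(P)$\nb-degree of $b$ can be recovered.
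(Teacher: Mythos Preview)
Your proof is essentially the same as the paper's: both reduce to the universal group $G(P)$, establish $J_{e_G}\subseteq J_{e_H}$ via the pointwise comparison $I_{p^{-1}(p\vee F)}\supseteq I_{\gamma(p)^{-1}(\gamma(p)\vee\gamma(F))}$, use Proposition~\ref{thm:strongly_covariant} together with Lemma~\ref{lem:pre_uniqueness_theorem} to make $\phi$ faithful on the fixed-point algebra, and then run the same combinatorial argument that $K_F=K_{\gamma(F)}=\emptyset$ for generators $b$ whose $G(P)$-degree is nontrivial. The one place you are slightly more careful than the paper is in flagging that one must check $\gamma(p)\notin K_{\{\gamma(p),\gamma(g)\}}$ in the ideal comparison; this is indeed needed (otherwise the right-hand ideal is all of $A$), and it follows from $K_{\{p,g\}}\neq\emptyset$ together with cancellativity, so your ``one checks that'' is justified.
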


The following is our main result:

\begin{thm} Let $P$ be a unital semigroup and let~$\Hilm=(\Hilm_p)_{p\in P}$ be a product system over $P$ of $A$\nb-correspondences. Suppose that~$P$ is embeddable into a group. There is a $\Cst$\nb-algebra~$A\times_{\Hilm}P$ associated to~$\Hilm$ with a representation $j_{\Hilm}\colon\Hilm\rightarrow A\times_{\Hilm}P$ such that the pair $(A\times_{\Hilm}P,j_{\Hilm})$ has the following properties:
\begin{enumerate}
\item[\textup{(C1)}] \label{thm:cuntz_pimsner_algebra} $A\times_{\Hilm}P$ is generated by~$j_{\Hilm}(\Hilm)$ as a $\Cst$\nb-algebra and~$j_{\Hilm}$ is strongly covariant in the sense of Definition~\textup{\ref{def:strong_covariance}}, where the group~$G$ in question may be taken to be any group containing~$P$ as a subsemigroup.

\item[\textup{(C2)}] if $\psi=\{\psi_p\}_{p\in P}$ is a strongly covariant representation of~$\Hilm$ in a $\Cst$\nb-algebra~$B$ with respect to a group containing~$P$, then there is a unique \Star homomorphism $\widehat{\psi}\colon A\times_{\Hilm}P\rightarrow B$ such that $\widehat{\psi}\circ j_p=\psi_p$ for all $p\in P$;

\item[\textup{(C3)}] $j_e$ is faithful and if~$G$ is a group with~$P\subseteq G$ as a subsemigroup, there is a canonical full coaction of~$G$ on~$A\times_{\Hilm}P$ so that a \Star homomorphism $A\times_{\Hilm}P\rightarrow B$ is faithful on the fixed-point algebra~$(A\times_{\Hilm}P)^\delta$ if and only if it is faithful on~$j_e(A)$.
\end{enumerate}

Moreover, up to canonical isomorphism, $(A\times_{\Hilm}P, j_{\Hilm})$ is the unique pair with the properties \textup{(C1)--(C3)}.
\end{thm}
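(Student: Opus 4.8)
The natural candidate for $A\times_{\Hilm}P$ is the quotient $\Toep_{\Hilm}/J_\infty$ together with the representation $j_{\Hilm} = q\circ\tilde t$, where $q\colon\Toep_{\Hilm}\to\Toep_{\Hilm}/J_\infty$ is the quotient map. The plan is to verify (C1)--(C3) for this pair and then to deduce uniqueness from (C1)--(C3) by a standard universal-property argument.

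First I would dispatch (C1): that $A\times_{\Hilm}P$ is generated by $j_{\Hilm}(\Hilm)$ is immediate since $\Toep_{\Hilm}$ is generated by $\tilde t(\Hilm)$ and $q$ is surjective; that $j_{\Hilm}$ is strongly covariant means exactly that $j_{\Hilm}$ vanishes on $J_e$, which holds because $J_e\subseteq J_\infty=\ker q$; and the independence of the group is precisely Lemma~\ref{lem:well_definedness}. For (C2), a strongly covariant representation $\psi$ factors through $\Toep_{\Hilm}$ by universality of the Toeplitz algebra, giving $\widetilde\psi\colon\Toep_{\Hilm}\to B$; since $\psi$ vanishes on $J_e$ and hence on the ideal $J_\infty$ it generates, $\widetilde\psi$ descends to $\widehat\psi\colon\Toep_{\Hilm}/J_\infty\to B$ with $\widehat\psi\circ j_p=\psi_p$, and uniqueness of $\widehat\psi$ follows from the generation statement in (C1). (Here one should note that by Lemma~\ref{lem:well_definedness} it does not matter which group the covariance of $\psi$ is tested against, so the hypothesis in (C2) is unambiguous.)

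For (C3): that $j_e=q\circ\tilde t_e$ is faithful is Proposition~\ref{thm:strongly_covariant}. The canonical coaction $\delta$ on $\Toep_{\Hilm}/J_\infty$ and the identification of its spectral subspaces with $\Toep_{\Hilm}^g/\Toep_{\Hilm}^gJ_e$ were established in the lemma following Lemma~\ref{lem:ideal_description}; in particular the fixed-point algebra $(A\times_{\Hilm}P)^\delta$ is $\Toep_{\Hilm}^e/J_e$. Now if $\pi\colon A\times_{\Hilm}P\to B$ is faithful on $j_e(A)$, then $\pi\circ j_{\Hilm}$ is a strongly covariant representation of $\Hilm$ with injective unit map, so Lemma~\ref{lem:pre_uniqueness_theorem} gives that the induced map on $\Toep_{\Hilm}^e/J_e=(A\times_{\Hilm}P)^\delta$ is faithful; the converse is trivial since $j_e(A)$ sits inside the fixed-point algebra. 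I expect this is the step that genuinely uses all the preceding machinery, but since Lemma~\ref{lem:pre_uniqueness_theorem} is already proved the remaining work is essentially bookkeeping.

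Finally, uniqueness: suppose $(C,\iota)$ is another pair satisfying (C1)--(C3). Since $\iota$ is strongly covariant (C1), property (C2) of $(A\times_{\Hilm}P,j_{\Hilm})$ yields a $*$-homomorphism $\widehat\iota\colon A\times_{\Hilm}P\to C$ with $\widehat\iota\circ j_p=\iota_p$; symmetrically, applying (C2) of $(C,\iota)$ to the strongly covariant representation $j_{\Hilm}$ gives $\widehat{\jmath}\colon C\to A\times_{\Hilm}P$ with $\widehat{\jmath}\circ\iota_p=j_p$. The composites $\widehat{\jmath}\circ\widehat\iota$ and $\widehat\iota\circ\widehat{\jmath}$ fix the generators, hence are the identity by the generation clauses of (C1). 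The only subtle point is that (C2) a priori only furnishes $*$-homomorphisms, not isomorphisms, but the two-sided inverse constructed this way makes $\widehat\iota$ an isomorphism, and it is canonical because it is the unique map intertwining the two representations. (One could alternatively invoke (C3) together with Proposition~\ref{thm:strongly_covariant} to see directly that $\widehat\iota$ is injective, but the mutual-inverse argument is cleaner.) The main obstacle throughout is not any single step but rather assembling the earlier lemmas in the right order; each individual verification is short once the preceding results are in hand.
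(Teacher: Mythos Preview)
Your proposal is correct and follows exactly the same approach as the paper: define $A\times_{\Hilm}P\coloneqq\Toep_{\Hilm}/J_\infty$ with $j_{\Hilm}=q\circ\tilde t$, then derive (C1) from Lemma~\ref{lem:well_definedness}, (C2) from the universal property of $\Toep_{\Hilm}$ together with Lemma~\ref{lem:well_definedness}, and (C3) from Proposition~\ref{thm:strongly_covariant} and Lemma~\ref{lem:pre_uniqueness_theorem}, with uniqueness handled by the standard mutual-inverse argument. Your write-up is in fact more detailed than the paper's own proof, which simply cites the lemmas and declares uniqueness ``clear''.
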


\begin{proof}Let~$G$ be a group containing~$P$ as a subsemigroup. Let~$J_{\infty}$ be the ideal in~$\Toep_{\Hilm}$ as in Lemma~\ref{lem:ideal_description}. That is, $J_{\infty}$ is the ideal generated by $J_e$, which in turn is the ideal in $\Toep^e_{\Hilm}$ constructed before Definition~\ref{def:strong_covariance}. Set~$A\times_{\Hilm}P\coloneqq\Toep_{\Hilm}/J_{\infty}$ and let~$j_{\Hilm}$ be the representation of~$\Hilm$ in $A\times_{\Hilm}P$ given by the composition of $\tilde{t}\colon\Hilm\rightarrow\Toep_{\Hilm}$ with the quotient map $q\colon\Toep_{\Hilm}\rightarrow\Toep_{\Hilm}/J_{\infty}$. By Lemma~\ref{lem:well_definedness}, this does not depend on the chosen group and hence it satisfies (C1). By the universal property of~$\Toep_{\Hilm}$ and again by Lemma~\ref{lem:well_definedness}, $A\times_{\Hilm}P$ also fulfils (C2). Now (C3) follows from Lemma~\ref{lem:pre_uniqueness_theorem}. Uniqueness of~$(A\times_{\Hilm}P, j_{\Hilm})$ is then clear.
\end{proof}

We call~$A\times_{\Hilm}P$ the \emph{covariance algebra} of~$\Hilm$, following the terminology of~\cite{MR2171670} for $\Cst$\nb-algebras associated to partial dynamical systems.

\begin{rem} The proof of Lemma~\ref{lem:well_definedness} also tells us that the fixed-point algebras of the canonical coactions on~$A\times_{\Hilm}P$ of all groups containing~$P$ coincide.
\end{rem}

\begin{example} Let~$G$ be a group and~$(B_g)_{g\in G}$ a saturated Fell bundle over~$G$. View~$(B_g)_{g\in G}$ as a product system over~$G$. For each~$g\in G$ and~$F\subseteq G$ finite, we have that~$I_{g^{-1}(g\vee F)}=B_e$ since~$g\in K_{\{g,h\}}$ for all~$h\in F$. Hence~$J_{e}=\{0\}$ and the associated covariance algebra is isomorphic to the cross sectional~$\Cst$\nb-algebra of~$(B_g)_{g\in G}$.
\end{example}

\section{Relationship to other constructions}\label{sec:examples}

In this section, we relate the covariance algebras of product systems defined here to other constructions in the setting of irreversible dynamical systems. We also give an equivalent notion of strongly covariant representations for compactly aligned product systems over quasi-lattice ordered groups.

\subsection{Relationship to a construction by Sims and Yeend} Let us restrict our attention to semigroups arising from quasi-lattice orders in the sense of~\cite{Nica:Wiener--hopf_operators}: let~$G$ be a group and let~$P$ be a subsemigroup of~$G$ with~$P\cap P^{-1}=\{e\}$. We say that~$(G,P)$ is a \emph{quasi-lattice ordered group} if given elements~$g_1, g_2$ of~$G$ with a common upper bound in~$P$ with respect to the partial order $g_1\leq g_2\Leftrightarrow g_1^{-1}g_2\in P$, then they also have a least upper bound~$g_1\vee g_2$ in~$P$. We write $g_1\vee g_2=\infty$ if~$g_1$ and~$g_2$ have no common upper bound in~$P$. In~\cite{Sims-Yeend:Cstar_product_systems}, Sims and Yeend constructed a $\Cst$\nb-algebra $\mathcal{NO}_{\Hilm}$ from a \emph{compactly aligned} product system~$\Hilm=(\Hilm_p)_{p\in P}$ so that it generalises constructions such as $\Cst$\nb-algebras associated to finitely aligned higher rank graphs and Katsura's Cuntz--Pimsner algebra of a single correspondence. The universal representation of~$\Hilm$ in~$\mathcal{NO}_{\Hilm}$ is quite often faithful, but Example 3.16 of~\cite{Sims-Yeend:Cstar_product_systems} shows that it may fail to be injective even if~$(G,P)$ is totally ordered and~$A$ acts by compact operators on~$\Hilm_p$ for all~$p$ in~$P$. In this subsection, we will see that~$\mathcal{NO}_{\Hilm}$ coincides with~$A\times_{\Hilm}P$ when either the universal representation of~$\Hilm$ in~$\mathcal{NO}_{\Hilm}$ is faithful and $P$ is directed or $\Hilm$ is a faithful product system. In both cases $\mathcal{NO}_{\Hilm}$ satisfies an analogue of (C3)~\cite[Proposition 3.7]{Carlsen-Larsen-Sims-Vittadello:Co-universal}. This subsection is based on~\cite{Carlsen-Larsen-Sims-Vittadello:Co-universal} and~\cite{Sims-Yeend:Cstar_product_systems}.

We first recall the definitions from~\cite{Sims-Yeend:Cstar_product_systems} of Cuntz--Nica--Pimsner covariance and Cuntz--Nica--Pimsner algebra. Let~$\psi=\{\psi_p\}_{p\in P}$ be a representation of~$\Hilm$ in~$B$. For each~$p\in P$, we will denote by $\psi^{(p)}$ the \Star homomorphism from~$\Comp(\Hilm_p)$ to~$B$ obtained as in~\cite{Pimsner:Generalizing_Cuntz-Krieger}. This is defined on a generalised rank\nobreakdash-$1$ operator~$\ket{\xi}\bra{\eta}$ by $$\psi^{(p)}\big(\ket{\xi}\bra{\eta}\big)\coloneqq\psi_p(\xi)\psi_p(\eta)^*.$$

Given a product system $\Hilm=(\Hilm_p)_{p\in P}$, we may use the multiplication maps to define \Star homomorphisms $\iota_p^{pq}\colon\Bound(\Hilm_p)\rightarrow\Bound(\Hilm_{pq})$. Explicitly, $\iota_p^{pq}$ sends $T\in\Bound(\Hilm_p)$ to $\mu_{p,q}\circ T\circ\mu_{p,q}^{-1}$. 

Fix a quasi-lattice ordered group~$(G,P)$. A product system~$\Hilm=(\Hilm_p)_{p\in P}$ is \emph{compactly aligned} if, for all~$p, q\in P $ with $p\vee q<\infty$, we have $$\iota_p^{p\vee q}(T)\iota_q^{p\vee q}(S)\in\Comp(\Hilm_{p\vee q}),\qquad\text{for all } T\in\Comp(\Hilm_p)\text{ and } S\in\Comp(\Hilm_q).$$ 
A representation $\psi=\{\psi_p\}_{p\in P}$ of $\Hilm$ in a $\Cst$\nb-algebra $B$ is \emph{Nica covariant} if for all $p,q\in P$, $T\in \Comp(\Hilm_p)$ and~$S\in\Comp(\Hilm_q)$, we have $$\psi^{(p)}(T)\psi^{(q)}(S)=\begin{cases} \psi^{(p\vee q)}\big(\iota_p^{p\vee q}(T)\iota_q^{p\vee q}(S)\big)  &\text{if } p\vee q<\infty,\\
0 & \text{otherwise.}
\end{cases}$$

The \emph{Nica--Toeplitz algebra} of~$\Hilm$, denoted by $\mathcal{N}\Toep_{\Hilm}$, is the $\Cst$\nb-algebra generated by a copy of~$\Hilm$ that is universal for Nica covariant representations of~$\Hilm$ (see~\cite[Theorem 6.3]{Fowler:Product_systems}).

Fix a compactly aligned product system~$\Hilm=(\Hilm_p)_{p\in P}$. Let $\bar{I}_e\coloneqq A$ and for each~$p\in P\setminus\{e\}$, set $$\bar{I}_p=\bigcap_{\substack{e<s\leq p}}\ker\varphi_{s}\idealin A.$$ Given~$p\in P$, we define a correspondence~$\widetilde{\Hilm}_p\colon A\leadsto A$ by $$\widetilde{\Hilm}_p\coloneqq \bigoplus_{\substack{r\leq p}} \Hilm_r\bar{I}_{r^{-1}p}.$$ For all $s\in P$, there is a \Star homomorphism $\widetilde{\iota}_s^p\colon\Bound(\Hilm_s)\rightarrow\Bound(\widetilde{\Hilm}_p)$ defined by $$\widetilde{\iota}_s^p(T)=\bigg(\bigoplus_{\substack{s\leq r\leq p}}\iota_s^r(T)\vert_{\Hilm_r\bar{I}_{r^{-1}p}}\bigg)\oplus\bigg(\bigoplus_{\substack{s\not\leq r\leq p}}0_{\Hilm_r\bar{I}_{r^{-1}p}}\bigg)\quad\text{for all } T\in\Bound(\Hilm_s).$$

Let $F\subseteq P$ be a finite set and let $T_s\in\Comp(\Hilm_s)$ for each $s\in F$. We say that~$\sum_{\substack{s\in F}}\widetilde{\iota}_s^p(T_s)=0$ for \emph{large}~$p$ if given an arbitrary element~$r$ in $P$, there exists $r'\geq r$ such that $\sum_{\substack{s\in F}}\widetilde{\iota}_s^p(T_s)=0$ for all $p\geq r'$. A representation $\psi$ of $\Hilm$ in a $\Cst$\nb-algebra~$B$ is \emph{Cuntz--Pimsner covariant} according to~\cite[Definition 3.9]{Sims-Yeend:Cstar_product_systems} if $$\sum_{\substack{s\in F}}\psi^{(s)}(T_s)=0$$ whenever~$\sum_{\substack{s\in F}}\widetilde{\iota}_s^p(T_s)=0$ for large~$p$. It is called \emph{Cuntz--Nica--Pimsner covariant} if it is both Nica covariant and Cuntz--Pimsner covariant. 

Suppose that~$\Hilm$ has the extra property that $\widetilde{\iota}_e^p$ is injective on~$A$ for all~$p\in P$. The \emph{Cuntz--Nica--Pimsner algebra} associated to~$\Hilm$, denoted by $\mathcal{NO}_{\Hilm}$, is then the universal $\Cst$\nb-algebra for Cuntz--Nica--Pimsner covariant representations (see \cite[Proposition 3.2]{Sims-Yeend:Cstar_product_systems} for further details). The requirement that $\widetilde{\iota}_e^p$ be faithful for all~$p\in P$ implies that the representation of~$\Hilm$ in~$\mathcal{NO}_{\Hilm}$ is faithful. Sims and Yeend proved in~\cite[Lemma 3.15]{Sims-Yeend:Cstar_product_systems} that this is satisfied whenever~$P$ has the following property: given a nonempty set $F\subseteq P$ that is bounded above, in the sense that there is $p\in P$ with $s\leq p$ for all $s\in F$, then $F$ has a maximal element~$r$. That is, $r\not\leq s$ for all $s\in F\setminus\{r\}$.  

The next example of a product system is given by Sims and Yeend in~\cite[Example 3.16]{Sims-Yeend:Cstar_product_systems}. It consists of a compactly aligned product system for which not all $\widetilde{\iota}_e^p$'s are injective. We recall their example here and describe its associated covariance algebra.

\begin{example}\label{ex:lexicographic} Let $\ZZ\times\ZZ$ be equipped with the lexicographic order and let~$P$ be its positive cone. So $P=\big((\NN\setminus\{0\})\times \ZZ\big)\cup\big(\{0\}\times\NN\big)$ and $e=(0,0)$. Define a product system over $P$ as follows: let $A\coloneqq\CC^2$ and, for each $p\in P$, let $\Hilm_p\coloneqq\CC^2$ be regarded as a Hilbert $A$\nb-module with right action given by coordinatewise multiplication and usual $\CC^2$\nb-valued inner product. Following the notation of~\cite{Sims-Yeend:Cstar_product_systems}, we set $S\coloneqq\{0\}\times\NN$ and for all $p\in S$, we let $\CC^2$ act on $\Hilm_p$ on the left by coordinatewise multiplication, so that $\varphi_p\big((\lambda_1,\lambda_2)\big)\coloneqq(\lambda_1,\lambda_2)\in \Bound(\Hilm_p)$ for all $p\in P$ and $(\lambda_1,\lambda_2)\in A$. For $p\in P\setminus S$, put $\varphi_p\big((\lambda_1,\lambda_2)\big)\coloneqq(\lambda_1,\lambda_1).$ Thus $\ker\varphi_p=\{0\}\times\CC$ for all $p\in P\setminus S$. If $q\in S$, define a correspondence isomorphism $\mu_{p,q}\colon\Hilm_p\otimes_{\CC^2}\Hilm_q\cong\Hilm_{pq}$ by $$(z_1,z_2)\otimes(w_1,w_2)\mapsto(z_1w_1,z_2w_2).$$ For all~$q\in P\setminus S$, define~$\mu_{p,q}\colon\Hilm_p\otimes_{\CC^2}\Hilm_q\cong\Hilm_{pq}$ by $$(z_1,z_2)\otimes(w_1,w_2)\mapsto(z_1w_1,z_1w_2).$$

This is a proper product system~$\Hilm=(\Hilm_p)_{p\in P}$ over $\CC^2$. Thus it is also compactly aligned. Since~$P$ is totally ordered, all representations of~$\Hilm$ are Nica covariant. Sims and Yeend proved that such a product system has no injective Cuntz--Nica--Pimsner covariant representation. Their argument is the following: for all~$p\neq e$, $\bar{I}_p=\ker\varphi_{(0,1)}=\{0\}$. Hence, if~$q\in P\setminus S$, $\widetilde{\iota}_e^q=\varphi_q$ is not injective and any Cuntz--Nica--Pimsner covariant representation of~$\Hilm$ vanishes on~$\ker\varphi_q=\{0\}\times\CC$. 

Let us now describe the associated covariance algebra $A\times_{\Hilm}P$. We will show that $(A\times_{\Hilm}P)^\delta$ is isomorphic to the $\Cst$\nb-algebra of all convergent sequences. To do so, given $p\in P$, write $p=(p_1,p_2)$. We define an isometry $v_p\in\Bound(\ell^2(\NN\times\ZZ))$ by $$v_p(f)(q)=\begin{cases}f(q-p)& \text{if }q_1\geq p_1,\\
0 & \text{otherwise,}\end{cases}$$ where~$f\in\ell^2(\NN\times\ZZ)$ and~$q=(q_1,q_2)\in\NN\times\ZZ$. Thus $v_p^*(f)(q)=f(q+p)$ and~$v_pv_p^*$ is the projection of $\ell^2(\NN\times\ZZ)$ onto the subspace $\ell^2(\NN_{\geq p_1}\times\ZZ)$. In particular, $v_p$ is unitary for all~$p\in S$. 

Let ~$\phi_e\colon\CC^2\rightarrow\Bound(\ell^2(\NN\times\ZZ))$ be the \Star homomorphism given by $$\phi_e\big((\lambda_1,\lambda_2))(f)(q)=\begin{cases}\lambda_2 f(q)& \text{if }q_1=0,\\
\lambda_1f(q) &\text{otherwise.}\end{cases}$$ For all $(z_1,z_2)_p\in \Hilm_p$, put $\phi_p\big((z_1,z_2)_p\big)\coloneqq v_p\phi_e\big((z_1,z_2)).$ This yields a representation $\phi=\{\phi_p\}_{p\in P}$ of $\Hilm$ in $\Bound(\ell^2(\NN\times\ZZ))$. We claim that~$\phi$ is strongly covariant and preserves the topological $\ZZ\times\ZZ$\nb-grading of $A\times_{\Hilm}P$. First, for each finite set~$F\subseteq P$, $$B_F\coloneqq\mathrm{span}\left\{ T_p\middle|\,p\in F, T_p\in\Comp(\Hilm_p)\right\}$$ is a $\Cst$\nb-subalgebra of~$\Toep^e_{\Hilm}$ since~$P$ is totally ordered (see also~\cite[Lemma 3.6]{Carlsen-Larsen-Sims-Vittadello:Co-universal}). In addition, $\Toep^{e}_{\Hilm}=\overline{\bigcup_{\substack{F\subseteq P}}B_F}$. By \cite[Lemma 1.3]{Adji-Laca-Nilsen-Raeburn}, $$J_{e}=\overline{\bigcup_{\substack{F\subseteq P}}J_{e}\cap B_F}.$$ So in order to prove that~$\phi$ is strongly covariant, it suffices to verify that, given a finite set~$F\subseteq P$, one has $$\sum_{\substack{p\in F}}\phi^{(p)}(T_p)=0$$ whenever $\sum_{\substack{p\in F}}j_{\Hilm}^{(p)}(T_p)=0$ in~$A\times_{\Hilm}P$. Here~$T_p\in \Comp(\Hilm_p)$ for each~$p\in F$. So suppose that~$F$ is a finite subset of~$P$ and $\sum_{\substack{p\in F}}j_{\Hilm}^{(p)}(T_p)=0$ in~$A\times_{\Hilm}P$, with~$T_p\in \Comp(\Hilm_p)$. Let~$(\lambda_{1,p},\lambda_{2,p})$ be such that~$T_p=(\lambda_{1,p},\lambda_{2,p})$. Write $$F=\overset{n}{\underset{i=1}{\bigcup}}F_{p_i}$$ with~$p_i<p_{i+1}$ for all $i\in\{1,\ldots,n\}$, where~$F_{p_i}$ is given by all of the elements in~$F$ having first component~$p_i$. Given a finite set $F'\subseteq P$ with $F'\supseteq F$, there is~$r_1=(p_1,q_1)\in P$ such that~$p'<r_1$ for all~$p'\in F'_{p_1}$. Then $$\Hilm_{r_1}I_{r_1^{-1}(r_1\vee F')}=\{0\}\times\CC.$$ So by taking finite sets $F'\subseteq P$ with $F'\supseteq F$, we conclude from the definition of strong covariance that $$\sum_{\substack{p\in F_{p_1}}}\lambda_{2,p}=0.$$  If~$p_2>p_1+1$, we deduce by a similar argument that the sum~$\sum_{\substack{p\in F_{p_1}}}\lambda_{1,p}$ must be zero as well because~$$\iota^r_p(T_p)(\lambda_1,\lambda_2)=\iota_p^r\left((\lambda_{1,p},\lambda_{2,p})\right)(\lambda_{1},\lambda_{2})=(\lambda_{1,p}\lambda_1,\lambda_{1,p}\lambda_2)$$ for all~$r=(r_1,r_2)>p$ with~$r_1>p_1$. In case~$p_2=p_1+1$, then $$\sum_{\substack{p\in F_{p_1}}}\lambda_{1,p}+\sum_{\substack{p\in F_{p_2}}}\lambda_{2,p}=0.$$ Repeating this argument for all of the~$p_i$'s and observing that $$\phi^{(p)}(T_p)(f)(q)=\begin{cases}\lambda_{2,p} f(q)& \text{if }q_1=p_1,\\
\lambda_{1,p}f(q) & \text{if } q_1>p_1,\\
0 &\text{otherwise,}\end{cases}$$ we conclude that~$\phi$ is indeed strongly covariant. The associated representation of~$A\times_{\Hilm}P$ on~$\ell^2(\NN\times\ZZ)$ is faithful on~$(A\times_{\Hilm}P)^\delta$ because it is injective on~$\CC^2$. Its image in~$\Bound(\ell^2(\NN\times\ZZ))$ is the $\Cst$\nb-algebra generated by~$\phi_e(\CC^2)$ and the family of isometries~$\{v_p\vert\, p\in P\}$.

To see that $\widehat{\phi}$ is faithful on $A\times_{\Hilm}P$, consider the canonical unitary representation of the torus~$\TT^2$ on $\ell^2(\NN\times\ZZ)$. Explicitly, the unitary~$U_{z}$ is given by $$U_{z}(f)(q)=z_1^{q_1}z_2^{q_2}f(q), \qquad q=(q_1,q_2)\in\NN\times\ZZ,$$ where $z=(z_1,z_2)\in\TT^2$. This produces a continuous action of~$\TT^2$ on $\widehat{\phi}(A\times_{\Hilm}P)$ by $T\mapsto UTU^*.$ Hence it carries a topological $\ZZ\times\ZZ$\nb-grading. The corresponding spectral subspace at~$(m,n)$ is determined by $$\left\{T\in \widehat{\phi}(A\times_{\Hilm}P)\middle|\,\, U_{z}TU^*_{z}=z_1^{m}z_2^{n}T\text{ for all }z=(z_1,z_2)\in\TT^2\right\}.$$ Since $\ZZ\times\ZZ=P\cup P^{-1}$, it is easy to verify that $\widehat{\phi}$ preserves the grading of~$A\times_{\Hilm}P$. Because~$\ZZ\times\ZZ$ is amenable, Proposition 20.2 and Theorem 20.7 of~\cite{Exel:Partial_dynamical} imply that $\widehat{\phi}$ is an isomorphism onto its image. Its restriction to~$(A\times_{\Hilm}P)^\delta$ yields an isomorphism onto the $\Cst$\nb-algebra of all convergent sequences $$\big\{(\zeta_n)_{n\in\NN}\in\ell^{\infty}(\NN)\vert\,\,\exists\lim_{\substack{n\rightarrow\infty}}\zeta_n\,\big\}.$$ This isomorphism sends $(\lambda_{1,p},\lambda_{2,p})\in\Comp(\Hilm_p)$ to the sequence $(\zeta_n)_{n\in\NN}$ with $$\zeta_n=\begin{cases}\lambda_{2,p} & \text{if }n=p_1,\\
\lambda_{1,p} &\text{if }n>p_1,\\
0 &\text{otherwise.}\end{cases}$$
\end{example}

The task of verifying whether a given representation is strongly covariant or not is considerably simplified when~$\Hilm$ is compactly aligned. The proof of the next proposition is taken from~\cite[Proposition 3.7]{Carlsen-Larsen-Sims-Vittadello:Co-universal} and adapted to our context.

\begin{prop}\label{prop:simplification} Let $\Hilm=(\Hilm_p)_{p\in P}$ be a compactly aligned product system. A representation~$\psi$ of $\Hilm$ in a $\Cst$\nb-algebra~$B$ is strongly covariant if and only if it is Nica covariant and satisfies
\begin{enumerate}
\item[\textup{(C)'}] $\sum_{p\in F}\psi^{(p)}(T_p)=0$ whenever~$\sum_{\substack{p\in F}}t^{(p)}_{F}(T_p)=0$ on~$\Hilm_{F}$, where $F\subseteq P$ is finite and $T_p\in\Comp(\Hilm_p)$ for all $p\in F$.
\end{enumerate}
\begin{proof} Suppose that~$\psi$ is strongly covariant. Let~$p,q\in P$, $T\in\Comp(\Hilm_p)$ and~$S\in \Comp(\Hilm_q)$. If~$p\vee q=\infty$, then~$K_{\{p,q\}}=\emptyset$ so that~$t_F^{(p)}(T)t_F^{(q)}(S)=0$ on the direct summand~$\Hilm_F$ for all finite subsets~$F$ of~$G$. Hence strong covariance implies~$j^{(p)}(T)j^{(q)}(S)=0$ in~$A\times_{\Hilm}P$. Assume that~$p\vee q<\infty$. Let~$T'\in\Comp(\Hilm_{p\vee q})$ be such that~$\iota_p^{p\vee q}(T)\iota_q^{p\vee q}(S)=T'$. Because~$t^{(p)}_F(T)t^{(q)}_F(S)=0$ on~$\Hilm_rI_{r^{-1}(r\vee F)}$ whenever~$r\not\in(p\vee q)P$, it follows that~$t^{(p)}_F(T)t^{(q)}(S)-t_F^{(p\vee q)}(T')=0$ for each finite subset~$F$ of~$G$. Hence $j^{(p)}(T)j^{(q)}(S)=j^{(p\vee q)}(T')$ and~$j_{\Hilm}$ is Nica covariant. 

Now if $\sum_{\substack{p\in F}}t^{(p)}_{F}(T_p)=0$ on~$\Hilm_{F}$ and~$F'\supseteq F$, then $\sum_{\substack{p\in F'}}t^{(p)}_{F'}(T_p)=0$ on~$\Hilm_{F'}$ since~$\Hilm^+_{F'}$ may be viewed as a closed submodule of~$\Hilm_F^+$. So~$\sum_{p\in F}j^{(p)}(T_p)=0$ in~$A\times_{\Hilm}P$ and, in particular, $\sum_{p\in F}\psi^{(p)}(T_p)=0$.

 Conversely, assume that~$\psi$ is Nica covariant and satisfies (C)'. In order to prove that~$\psi$ is strongly covariant, we use the ideas employed in~\cite{Carlsen-Larsen-Sims-Vittadello:Co-universal}. Let~$\mathcal{P}_{\mathrm{fin}}^{\vee}$ denote the set of all finite subsets of~$P$ that are $\vee$\nb-closed. Precisely, $F\in\mathcal{P}_{\mathrm{fin}}^{\vee}$ if it is finite and for all~$p,q\in F$ with~$p\vee q<\infty$, one has $p\vee q\in F$. For each~$F$ in~$\mathcal{P}_{\mathrm{fin}}^{\vee}$, let~$B_F$ denote the subspace of $\mathcal{N}\Toep^e_{\Hilm}$ given by  $$\bigg\{\sum_{\substack{p\in F}}T_p\vert\, T_p\in \Comp(\Hilm_p)\bigg\}.$$ Here we introduce no special notation to identify an element of~$\Comp(\Hilm_p)$ with its image in~$\mathcal{N}\Toep_{\Hilm}.$  We observe that~$B_F$ is a $\Cst$\nb-subalgebra of $\mathcal{N}\Toep^e_{\Hilm}$ and, in addition, $$\mathcal{N}\Toep^e_{\Hilm}=\overline{\bigcup_{\substack{F\in\mathcal{P}_{\mathrm{fin}}^{\vee}}} B_{F}}.$$ Hence, since~$A\times_{\Hilm}P$ is a quotient of~$\mathcal{N}\Toep_{\Hilm}$, Lemma 1.3 of~\cite{Adji-Laca-Nilsen-Raeburn} says that all we must do is prove that $\sum_{p\in F}t_F^{(p)}(T_p)=0$ on~$\Hilm_F$ if~$\sum_{\substack{p\in F}}j^{(p)}(T_p)=0$ in~$A\times_{\Hilm}P$.

Given $r\in P$, it follows from Nica covariance that $j^{(p)}(T_p)j_r(\Hilm_r)=0$ when $p\vee r=\infty$ and $$j^{(p)}(T_p)j_r(\Hilm_r)\subseteq j_r(\Hilm_r)j^{(r^{-1}(r\vee p))}\big(\Comp(\Hilm_{r^{-1}(r\vee p)})\big)$$ otherwise. So $j^{(p)}(T_p)j_r(\Hilm_r\ker\varphi_{r^{-1}(r\vee p)})=0$ if $p\not\leq r$. Therefore, \begin{align*}\underset{p\in F}{\sum}j^{(p)}(T_p)j_r(\Hilm_rI_{r^{-1}(r\vee F)})&=\sum_{\substack{p\in F\\p\leq r}}j^{(p)}(T_p)j_r(\Hilm_rI_{r^{-1}(r\vee F)})\\&=j_r\Big(\sum_{\substack{p\in F\\p\leq r}}t_F^{(p)}(T_p)\Hilm_rI_{r^{-1}(r\vee F)}\Big).\end{align*}Since~$j_{\Hilm}$ is injective, $\underset{p\in F}{\sum}j^{(p)}(T_p)=0$ yields~$\sum_{\substack{p\in F\\p\leq r}}t_F^{(p)}(T_p)\Hilm_rI_{r^{-1}(r\vee F)}=0$ for all~$r\in P$, and we deduce that~$\sum_{\substack{p\in F}}t^{(p)}_{F}(T_p)=0$ on~$\Hilm_{F}$ as desired.
\end{proof} 

\end{prop}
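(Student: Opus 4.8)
The plan is to use that a representation of $\Hilm$ is strongly covariant exactly when it factors through $A\times_{\Hilm}P=\Toep_{\Hilm}/J_{\infty}$, and then to prove the two implications separately, using compact alignment to cut everything down to finitely many ``levels''. For the forward implication it is enough to show that $j_{\Hilm}$ is Nica covariant and satisfies (C)$'$, since an arbitrary strongly covariant $\psi$ factors as $\widehat{\psi}\circ j_{\Hilm}$ through $A\times_{\Hilm}P$ and hence inherits both properties.

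To see $j_{\Hilm}$ is Nica covariant I would work directly inside $\Bound(\Hilm_F^+)$: for $T\in\Comp(\Hilm_p)$ and $S\in\Comp(\Hilm_q)$ with $p\vee q=\infty$, the operator $t_F^{(p)}(T)t_F^{(q)}(S)$ vanishes identically, because $pP\cap qP=\emptyset$ forces the intermediate factor $t_F^{p}(\,\cdot\,)^*t_F^{q}(\,\cdot\,)$ to act by zero; and when $p\vee q<\infty$ one checks the identity $t_F^{(p)}(T)t_F^{(q)}(S)=t_F^{(p\vee q)}\!\big(\iota_p^{p\vee q}(T)\iota_q^{p\vee q}(S)\big)$, both operators being supported on the summands coming from $\Hilm_r$ with $r\ge p\vee q$ and agreeing there under the identification $\Hilm_r\cong\Hilm_{p\vee q}\otimes_A\Hilm_{(p\vee q)^{-1}r}$. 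Since the differences $\tilde t^{(p)}(T)\tilde t^{(q)}(S)-\tilde t^{(p\vee q)}(\,\cdot\,)$ then satisfy $\|\,\cdot\,\|_F=0$ for every finite $F\subseteq G$, they lie in $J_e$, so Nica covariance descends to $A\times_{\Hilm}P$. For (C)$'$ I would use that $F\subseteq F'$ makes $\Hilm_{F'}$ a closed submodule of $\Hilm_F$ reducing the operators $t_F^{(p)}(T_p)$, so $\sum_{p\in F}t_F^{(p)}(T_p)=0$ on $\Hilm_F$ propagates to every $\Hilm_{F'}$ with $F'\supseteq F$; hence $\sum_{p\in F}\tilde t^{(p)}(T_p)$ has $\|\,\cdot\,\|_{F'}=0$ for all large $F'$, lies in $J_e$, and is annihilated by every strongly covariant representation.

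For the converse, assume $\psi$ is Nica covariant and satisfies (C)$'$. Nica covariance lets $\psi$ factor through $\mathcal{N}\Toep_{\Hilm}$, of which $A\times_{\Hilm}P$ is a quotient because $j_{\Hilm}$ is Nica covariant; so it suffices to show the kernel of the quotient map $\mathcal{N}\Toep_{\Hilm}\to A\times_{\Hilm}P$ lies in the kernel of the $\Cst$\nb-homomorphism induced by $\psi$. Writing the fixed-point algebra $\mathcal{N}\Toep^e_{\Hilm}$ as the closure of the increasing union $\bigcup_F B_F$ over finite $\vee$\nb-closed $F\subseteq P$, with $B_F=\{\sum_{p\in F}T_p : T_p\in\Comp(\Hilm_p)\}$ a $\Cst$\nb-subalgebra, \cite[Lemma 1.3]{Adji-Laca-Nilsen-Raeburn} reduces the claim to the fibrewise statement: $\sum_{p\in F}j^{(p)}(T_p)=0$ in $A\times_{\Hilm}P$ implies $\sum_{p\in F}\psi^{(p)}(T_p)=0$. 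By (C)$'$ it then suffices to deduce $\sum_{p\in F}t_F^{(p)}(T_p)=0$ on $\Hilm_F$ from $\sum_{p\in F}j^{(p)}(T_p)=0$; for this I would right-multiply by $j_r\big(\Hilm_r I_{r^{-1}(r\vee F)}\big)$ for $r\in P$, use Nica covariance together with $I_{r^{-1}(r\vee F)}\subseteq\ker\varphi_{r^{-1}(r\vee p)}$ to discard the terms with $p\not\le r$, rewrite each surviving term as $j_r\big(t_F^{(p)}(T_p)\,\Hilm_r I_{r^{-1}(r\vee F)}\big)$, and invoke injectivity of $j_{\Hilm}$ to conclude that $\sum_{p\in F}t_F^{(p)}(T_p)$ annihilates every summand $\Hilm_r I_{r^{-1}(r\vee F)}$ of $\Hilm_F$.

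I expect the main obstacle to be in the converse, namely justifying the reduction to the subalgebras $B_F$: this needs the spanning result $\mathcal{N}\Toep^e_{\Hilm}=\overline{\bigcup_F B_F}$, which itself rests on compact alignment and $\vee$\nb-closure, together with the fact that the relevant ideal is invariant for the gauge coaction and so is detected on the fixed-point algebra. The computations in the forward direction are routine but must be carried out carefully, summand by summand, in $\Bound(\Hilm_F^+)$.
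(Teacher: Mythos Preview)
Your proposal is correct and follows essentially the same route as the paper's proof: the forward direction via showing that $j_{\Hilm}$ itself is Nica covariant and satisfies (C)$'$ by computing in $\Bound(\Hilm_F^+)$, and the converse via the $B_F$ decomposition of $\mathcal{N}\Toep^e_{\Hilm}$ over $\vee$\nb-closed finite $F\subseteq P$, the inductive-limit lemma of Adji--Laca--Nilsen--Raeburn, and the right-multiplication-by-$j_r(\Hilm_r I_{r^{-1}(r\vee F)})$ computation using injectivity of $j_{\Hilm}$. The obstacle you flag is exactly the one the paper handles, and your outline of how to discard the $p\not\le r$ terms matches the paper's argument.
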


The previous proposition combined with~\cite[Theorem 6.3]{Rennie_groupoidfell} gives us the following:

\begin{cor} Let~$(G,P)$ be a quasi-lattice ordered group and let~$\Hilm=(\Hilm_p)_{p\in P}$ be a compactly aligned product system. Suppose that~$G$ is amenable. If~$A$ is nuclear, then~$A\times_{\Hilm}P$ is nuclear. 
\end{cor}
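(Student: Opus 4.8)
The plan is to realise $A\times_{\Hilm}P$ as a quotient of the Nica--Toeplitz algebra $\mathcal{N}\Toep_{\Hilm}$ and then to combine the permanence of nuclearity under quotients with the nuclearity criterion of~\cite[Theorem 6.3]{Rennie_groupoidfell}.

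First I would note that, by the ``only if'' direction of Proposition~\ref{prop:simplification}, every strongly covariant representation of~$\Hilm$ is Nica covariant; in particular the universal strongly covariant representation~$j_{\Hilm}$ of~$\Hilm$ in~$A\times_{\Hilm}P$ is itself a Nica covariant representation of~$\Hilm$. By the universal property of~$\mathcal{N}\Toep_{\Hilm}$ this yields a \Star homomorphism $\pi\colon\mathcal{N}\Toep_{\Hilm}\to A\times_{\Hilm}P$ carrying the canonical copy of~$\Hilm$ in~$\mathcal{N}\Toep_{\Hilm}$ onto~$j_{\Hilm}(\Hilm)$. Since~$A\times_{\Hilm}P$ is generated by~$j_{\Hilm}(\Hilm)$ as a $\Cst$\nb-algebra by property~(C1), the map~$\pi$ is surjective, so~$A\times_{\Hilm}P\cong\mathcal{N}\Toep_{\Hilm}/\ker\pi$.

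Next, as~$(G,P)$ is quasi-lattice ordered with~$G$ amenable and~$A$ is nuclear, \cite[Theorem 6.3]{Rennie_groupoidfell} gives that~$\mathcal{N}\Toep_{\Hilm}$ is nuclear. A quotient of a nuclear $\Cst$\nb-algebra is again nuclear, so~$A\times_{\Hilm}P$ is nuclear.

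I do not anticipate a real obstacle: the only step that depends on the foregoing development of the paper is the identification of~$A\times_{\Hilm}P$ as a quotient of~$\mathcal{N}\Toep_{\Hilm}$, and that is immediate from Proposition~\ref{prop:simplification}; the two remaining ingredients are the cited nuclearity theorem and the standard fact that nuclearity descends to quotients.
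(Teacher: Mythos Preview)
Your proposal is correct and matches the paper's approach exactly: the paper states this corollary as an immediate consequence of Proposition~\ref{prop:simplification} (which exhibits $A\times_{\Hilm}P$ as a quotient of $\mathcal{N}\Toep_{\Hilm}$) together with \cite[Theorem~6.3]{Rennie_groupoidfell}, and you have spelled out precisely these steps.
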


We denote by~$q_{\mathcal{N}}$ the \Star homomorphism from~$\mathcal{N}\Toep_{\Hilm}$ to~$A\times_{\Hilm}P$ induced by~$j_{\Hilm}=\{j_p\}_{p\in P}$. The proof of the next result is essentially identical to that of Proposition~\ref{prop:simplification}. This is inspired by \cite[Proposition 3.7]{Carlsen-Larsen-Sims-Vittadello:Co-universal}.

\begin{prop}\label{prop:uniqueness_theorem} Let~$\psi$ be an injective Nica covariant representation of~$\Hilm$ in a $\Cst$\nb-algebra~$B$ and let $\psi_{\mathcal{N}}$ denote the induced \Star homomorphism $\mathcal{N}\Toep_{\Hilm}\to B$. Then
$(\ker\psi_{\mathcal{N}})\cap\mathcal{NT}^e_{\Hilm}\subseteq\ker q_{\mathcal{N}}.$
\end{prop}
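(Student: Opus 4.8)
The plan is to mimic the argument of Proposition~\ref{prop:simplification}, but now tracking the kernel of an arbitrary injective Nica covariant representation rather than strong covariance of a single one. The key structural fact is that $\mathcal{N}\Toep^e_{\Hilm}$ is the closure of $\bigcup_{F\in\mathcal{P}_{\mathrm{fin}}^{\vee}} B_F$, where $B_F=\big\{\sum_{p\in F}T_p\mid T_p\in\Comp(\Hilm_p)\big\}$ is a $\Cst$\nb-subalgebra. By \cite[Lemma 1.3]{Adji-Laca-Nilsen-Raeburn}, an ideal of $\mathcal{N}\Toep^e_{\Hilm}$ is the closure of its intersections with the $B_F$'s, so it suffices to prove: if $\sum_{p\in F}\psi^{(p)}(T_p)=0$ in $B$ with $F\in\mathcal{P}_{\mathrm{fin}}^{\vee}$ and $T_p\in\Comp(\Hilm_p)$, then $\sum_{p\in F}j^{(p)}(T_p)=0$ in $A\times_{\Hilm}P$.

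First I would exploit injectivity of $\psi$ together with Nica covariance to reduce the vanishing of $\sum_{p\in F}\psi^{(p)}(T_p)$ to a module-level statement. For each $r\in P$, Nica covariance gives $\psi^{(p)}(T_p)\psi_r(\Hilm_r)=0$ when $p\vee r=\infty$, and $\psi^{(p)}(T_p)\psi_r(\Hilm_r)\subseteq\psi_r(\Hilm_r)\psi^{(r^{-1}(r\vee p))}\big(\Comp(\Hilm_{r^{-1}(r\vee p)})\big)$ otherwise; in particular $\psi^{(p)}(T_p)\psi_r(\Hilm_r\ker\varphi_{r^{-1}(r\vee p)})=0$ whenever $p\not\leq r$. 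Hence, exactly as in Proposition~\ref{prop:simplification},
\[
\sum_{p\in F}\psi^{(p)}(T_p)\,\psi_r\big(\Hilm_rI_{r^{-1}(r\vee F)}\big)
=\psi_r\Big(\sum_{\substack{p\in F\\ p\leq r}}t_F^{(p)}(T_p)\,\Hilm_rI_{r^{-1}(r\vee F)}\Big),
\]
and since $\psi_e$ is injective (so $\psi_r$ is isometric), $\sum_{p\in F}\psi^{(p)}(T_p)=0$ forces $\sum_{p\in F,\,p\leq r}t_F^{(p)}(T_p)\Hilm_rI_{r^{-1}(r\vee F)}=0$ for every $r\in P$, i.e.\ $\sum_{p\in F}t^{(p)}_F(T_p)=0$ on $\Hilm_F$. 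Now I would run the \emph{same} computation backwards using $j_{\Hilm}$ in place of $\psi$: because $j_{\Hilm}$ is a Nica covariant representation (it is strongly covariant, and strong covariance implies Nica covariance by Proposition~\ref{prop:simplification}) and is injective by property~(C3), the displayed identity holds verbatim with $j$ replacing $\psi$, and $\sum_{p\in F}t^{(p)}_F(T_p)=0$ on $\Hilm_F$ yields $\sum_{p\in F}j^{(p)}(T_p)j_r\big(\Hilm_rI_{r^{-1}(r\vee F)}\big)=0$ for all $r\in P$. Taking $r$ through an approximate identity for $A$ and using that $\bigoplus_r\Hilm_rI_{r^{-1}(r\vee F)}$ is exactly $\Hilm_F$ — together with the fact that $j_{\Hilm}$ is strongly covariant, so the representation $t_F$ of $\Toep_{\Hilm}^e$ factors through $A\times_{\Hilm}P$ faithfully enough on the relevant piece — gives $\sum_{p\in F}j^{(p)}(T_p)=0$ in $A\times_{\Hilm}P$, as required.

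The main obstacle is the very last implication: passing from ``$\sum_{p\in F}j^{(p)}(T_p)$ annihilates $j_r(\Hilm_rI_{r^{-1}(r\vee F)})$ for all $r$'' back to ``$\sum_{p\in F}j^{(p)}(T_p)=0$''. One cannot simply invoke injectivity of $j_{\Hilm}$ on $\Hilm$, since $\sum_{p\in F}j^{(p)}(T_p)$ lives in the unit fibre and is not of the form $j_p(\xi)$. The correct tool is that $A\times_{\Hilm}P=\Toep_{\Hilm}/J_\infty$ and, by the construction preceding Definition~\ref{def:strong_covariance}, the operator $t_F$ realises $\Toep_{\Hilm}^e/J_e$ faithfully after passing to the inductive limit over finite subsets; concretely, $\sum_{p\in F}j^{(p)}(T_p)=0$ in $A\times_{\Hilm}P$ is \emph{equivalent} to the lift $\sum_{p\in F}\tilde t^{(p)}(T_p)$ lying in $J_e$, which by definition means $\big\|\sum_{p\in F}t_S^{(p)}(T_p)\big\|_S\to 0$ as $S$ runs over finite subsets of $G$; and for $S\supseteq F$ (with $S\subseteq P$), $\Hilm^+_S$ is a submodule of $\Hilm^+_F$, so vanishing of $\sum_{p\in F}t^{(p)}_F(T_p)$ on $\Hilm_F$ already forces vanishing on $\Hilm_S$ for all such $S$ — which is exactly the strong covariance condition (C)'. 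Thus the final step is precisely the ``if'' direction of Proposition~\ref{prop:simplification} applied to $j_{\Hilm}$, and I would simply cite it rather than reprove it. Having both directions, $(\ker\psi_{\mathcal{N}})\cap\mathcal{NT}^e_{\Hilm}\subseteq\ker q_{\mathcal{N}}$ follows on intersecting with each $B_F$ and applying \cite[Lemma 1.3]{Adji-Laca-Nilsen-Raeburn}.
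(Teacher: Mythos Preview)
Your proposal is correct and takes the same route the paper indicates (it simply says the proof is ``essentially identical to that of Proposition~\ref{prop:simplification}''). The detour in your middle paragraph---running the displayed identity backwards with~$j_{\Hilm}$ and then worrying about how to recover $\sum_{p\in F}j^{(p)}(T_p)=0$---is unnecessary: once you have established $\sum_{p\in F}t^{(p)}_F(T_p)=0$ on~$\Hilm_F$ from the injectivity and Nica covariance of~$\psi$, you may immediately cite that $j_{\Hilm}$ satisfies~(C)$'$ (this is the ``only if'' direction of Proposition~\ref{prop:simplification}, not the ``if'' direction as you wrote) to conclude $\sum_{p\in F}j^{(p)}(T_p)=0$.
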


The following is \cite[Example 3.9]{Carlsen-Larsen-Sims-Vittadello:Co-universal}.

\begin{example}\label{ex:free_group} Let $\FF_2$ denote the free group on two generators $a$ and $b$. Then $\FF_2$ is quasi-lattice ordered and its positive cone $\FF_2^+$ is the unital semigroup generated by~$a$ and~$b$. Define a product system over~$\FF_2^+$ by setting $A\coloneqq\CC$, $\Hilm_a\coloneqq \CC$ and $\Hilm_b\coloneqq\{0\}$, where~$\CC$ is regarded as a Hilbert bimodule over~$\CC$ in the usual way.  So~$\Hilm_{a^n}=\CC$ for all~$n\in\NN$. A subset of~$\FF_2^+$ that is bounded above has a maximal element, so that the representation of~$\Hilm$ in~$\mathcal{NO}_{\Hilm}$ is injective. However, in~\cite{Carlsen-Larsen-Sims-Vittadello:Co-universal} this example illustrates the fact that the conclusion of Proposition~\ref{prop:uniqueness_theorem} may fail for $\mathcal{NO}_{\Hilm}$ if~$P$ is not directed and~$\Hilm$ is non-faithful. 

Define a representation of~$\Hilm$ in $\CC$ by $\psi_p(\lambda_p)=\lambda_p$ for all $p\in P$ and $\lambda_p\in\Hilm_p$. So~$\psi_e$ is faithful. Let $1_a\in\Comp(\Hilm_a)$. Then~$\psi_e(1)-\psi^{(a)}(1_a)=0$ but $\widetilde{\iota}^p_e(1)-\widetilde{\iota}_a^p(1_a)\neq 0$ for all~$p\geq b$. Hence the image of $1-1_a$ in $\mathcal{NO}_{\Hilm}$ is nonzero and it becomes clear that~$\mathcal{NO}_{\Hilm}$ and $A\times_{\Hilm}P$ are not isomorphic, since $j_e(1)-j^{(a)}(1_a)=0$ in the latter. For this example, $A\times_{\Hilm}P$ is the universal $\Cst$-algebra generated by a unitary. That is, $A\times_{\Hilm}P\cong C(\TT)$ with~$j_a(\lambda_a)=\lambda_a z$ and~$j_e(\lambda)=\lambda$, where $z\colon\TT\rightarrow\CC$ is the inclusion function.
\end{example}

\begin{prop}\label{prop:Cuntz-Nica-Pimsner} Let $(G,P)$ be a quasi-lattice ordered group and let~$\Hilm=(\Hilm_p)_{p\in P}$ be a compactly aligned product system over~$P$. Suppose either that~$\Hilm$ is faithful or that~$P$ is directed and the representation of~$\Hilm$ in~$\mathcal{NO}_{\Hilm}$ is injective. Then~$\mathcal{NO}_{\Hilm}$ and $A\times_{\Hilm}P$ are canonically isomorphic to each other.
\end{prop}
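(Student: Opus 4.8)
The plan is to exhibit a mutual pair of $\Cst$\nb-homomorphisms between $\mathcal{NO}_{\Hilm}$ and $A\times_{\Hilm}P$ that are inverse to each other, using the universal properties of both algebras together with the simplified characterisation of strong covariance in Proposition~\ref{prop:simplification}. First I would note that by Proposition~\ref{prop:simplification} the canonical representation $j_{\Hilm}$ of~$\Hilm$ in~$A\times_{\Hilm}P$ is Nica covariant; the task is to check that it is also Cuntz--Pimsner covariant in the sense of \cite[Definition 3.9]{Sims-Yeend:Cstar_product_systems}, i.e.\ that $\sum_{s\in F}j^{(s)}(T_s)=0$ whenever $\sum_{s\in F}\widetilde{\iota}_s^p(T_s)=0$ for large~$p$. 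The hypothesis---either $\Hilm$ faithful or $P$ directed with $\Hilm$ injectively represented in $\mathcal{NO}_{\Hilm}$---is exactly what lets one identify the correspondences $\widetilde{\Hilm}_p$ of Sims and Yeend with suitable $\Hilm_F$'s: this is the content alluded to in the paragraph after Definition~\ref{def:strong_covariance}, and I would carry it out by comparing $\bar{I}_p=\bigcap_{e<s\le p}\ker\varphi_s$ with the ideals $I_{p^{-1}(p\vee F)}$ for $F$ a finite subset of~$P$. Concretely, for $P$ directed one shows that for any finite $F\subseteq P$ the module $\Hilm_F$ embeds as a complemented submodule of $\widetilde{\Hilm}_p$ for all sufficiently large $p$, compatibly with the representations $t_F$ and $\widetilde{\iota}$; the ``for large~$p$'' quantifier on the Sims--Yeend side matches the limit over the directed set of finite subsets of~$G$ (equivalently of~$P$, since $(G,P)$ is quasi-lattice ordered) on ours. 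This yields that $j_{\Hilm}$ descends to a $\Cst$\nb-homomorphism $\Phi\colon\mathcal{NO}_{\Hilm}\to A\times_{\Hilm}P$ which is surjective because $j_{\Hilm}(\Hilm)$ generates $A\times_{\Hilm}P$.

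Conversely, the universal Cuntz--Nica--Pimsner covariant representation of $\Hilm$ in $\mathcal{NO}_{\Hilm}$ is in particular Nica covariant, and I claim it satisfies condition (C)$'$ of Proposition~\ref{prop:simplification}, hence is strongly covariant. Again the identification of $\Hilm_F$ with a submodule of the $\widetilde{\Hilm}_p$'s does the work: if $\sum_{p\in F}t_F^{(p)}(T_p)=0$ on $\Hilm_F$, then pushing forward along the embedding $\Hilm_F\hookrightarrow\widetilde{\Hilm}_r$ for large $r$ gives $\sum_{p\in F}\widetilde{\iota}_p^r(T_p)=0$ for large~$r$, whence Cuntz--Pimsner covariance of the representation in $\mathcal{NO}_{\Hilm}$ forces $\sum_{p\in F}\psi^{(p)}(T_p)=0$ there. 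By property (C2) of the main theorem this produces a $\Cst$\nb-homomorphism $\Psi\colon A\times_{\Hilm}P\to\mathcal{NO}_{\Hilm}$, surjective for the same generation reason.

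It remains to see that $\Phi$ and $\Psi$ are mutually inverse. Both compositions $\Phi\circ\Psi$ and $\Psi\circ\Phi$ fix the generating set $j_{\Hilm}(\Hilm)$, respectively the image of $\Hilm$ in $\mathcal{NO}_{\Hilm}$, since each of $\Phi,\Psi$ is induced by the identity on the copy of~$\Hilm$; as both algebras are generated by these copies of~$\Hilm$, the two composites are the respective identity maps. Finally one checks that the isomorphism intertwines the canonical $G$\nb-coactions, which is immediate because on generators both coactions are given by $\xi_p\mapsto\xi_p\otimes u_p$; in particular the isomorphism is $G$\nb-equivariant and canonical. The main obstacle I expect is the first step: rigorously matching the ``large~$p$'' Cuntz--Pimsner covariance of Sims and Yeend with the inductive-limit definition of $J_e$ over finite subsets, i.e.\ proving that $\bigoplus_{r\le p}\Hilm_r\bar{I}_{r^{-1}p}$ and $\bigoplus_{r\in P}\Hilm_r I_{r^{-1}(r\vee F)}$ carry the same information in the relevant limit. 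This is where the two standing hypotheses---faithfulness of~$\Hilm$, or directedness of~$P$ together with injectivity of the representation in~$\mathcal{NO}_{\Hilm}$ (so that the ideals $\bar I_p$ behave well, cf.\ \cite[Lemma 3.15]{Sims-Yeend:Cstar_product_systems})---must be used in an essential way, and care is needed with the bookkeeping of the ideals $I_{p^{-1}(p\vee F)}$ when $F$ is enlarged.
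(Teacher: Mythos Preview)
Your approach is genuinely different from the paper's, and the difference is worth noting. You attempt a direct comparison of the Sims--Yeend modules $\widetilde{\Hilm}_p$ with the modules $\Hilm_F$, in order to show that each universal representation satisfies the other's covariance condition. The paper avoids this module comparison entirely. Instead it proceeds in two short steps. First, it applies Proposition~\ref{prop:uniqueness_theorem} to the universal Cuntz--Nica--Pimsner representation $\bar{j}_{\Hilm}$ (which is injective under either hypothesis): this gives $\ker\bar{j}_{\mathcal{N}}\cap\mathcal{NT}^e_{\Hilm}\subseteq\ker q_{\mathcal{N}}$, and since the Sims--Yeend relations all live in $\mathcal{NT}^e_{\Hilm}$, this immediately shows that $j_{\Hilm}$ is Cuntz--Nica--Pimsner covariant, yielding a surjection $j\colon\mathcal{NO}_{\Hilm}\to A\times_{\Hilm}P$. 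Second, it invokes \cite[Proposition~3.7]{Carlsen-Larsen-Sims-Vittadello:Co-universal}, which is stated for precisely the two hypotheses at hand, to conclude that $j$ is faithful on $\mathcal{NO}^e_{\Hilm}$. Since $j\circ\bar{j}_{\mathcal{N}}=q_{\mathcal{N}}$, faithfulness on the $e$-fibre forces $\ker q_{\mathcal{N}}\cap\mathcal{NT}^e_{\Hilm}\subseteq\ker\bar{j}_{\mathcal{N}}$, and the grading then gives the full inverse.

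What the paper's route buys is that the hypothesis is used only through the black box \cite[Proposition~3.7]{Carlsen-Larsen-Sims-Vittadello:Co-universal}; no explicit comparison of $\bar{I}_{r^{-1}p}$ with $I_{r^{-1}(r\vee F)}$ is ever needed. What your route would buy, if completed, is a self-contained argument that makes the heuristic ``the $\Hilm_F$'s replace the $\widetilde{\Hilm}_p$'s'' precise---the paper alludes to this after Definition~\ref{def:strong_covariance} but does not actually prove it that way. Your honest flagging of the module comparison as ``the main obstacle'' is accurate: the ideals $\bar{I}_{r^{-1}p}=\bigcap_{e<s\le r^{-1}p}\ker\varphi_s$ and $I_{r^{-1}(r\vee F)}=\bigcap_{g\in F}I_{r^{-1}K_{\{r,g\}}}$ are not obviously comparable termwise, and establishing the right inclusions in both directions (one for each half of your argument) is real work that your proposal leaves undone. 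The paper's shortcut via Proposition~\ref{prop:uniqueness_theorem} and the cited result sidesteps this entirely.
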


\begin{proof} Let~$\bar{j}_{\Hilm}$ denote the representation of~$\Hilm$ in~$\mathcal{NO}_{\Hilm}.$ By Proposition~\ref{prop:uniqueness_theorem}, $\ker\bar{j}_{\mathcal{N}}\cap \mathcal{N}\Toep_{\Hilm}^e\subseteq\ker q_{\mathcal{N}}.$ In particular, $j_{\Hilm}$ is an injective Cuntz--Nica--Pimsner covariant representation of~$\Hilm$ in~$A\times_{\Hilm}P$. Hence~\cite[Proposition 3.7]{Carlsen-Larsen-Sims-Vittadello:Co-universal} implies that the induced \Star homomorphism $j\colon\mathcal{NO}_{\Hilm}\rightarrow A\times_{\Hilm}P$ is faithful on the fixed-point algebra~$\mathcal{NO}^e_{\Hilm}$. Therefore, $\bar{j}_{\mathcal{N}}$ vanishes on~$\ker q_{\mathcal{N}}$ and it factors through~$A\times_{\Hilm}P$. Thus~$\widehat{\bar{j}}_{\mathcal{N}}$ is the inverse of~$j$
\end{proof}

\subsection{Fowler's Cuntz--Pimsner algebra} Recall that a representation~$\psi$ of a correspondence~$\Hilm\colon A\leadsto A$ is \emph{Cuntz--Pimsner covariant} on an ideal~$J\idealin A$ with~$\varphi(J)\subseteq\Comp(\Hilm)$ if $$\psi(a)=\psi^{(1)}(\varphi(a))$$ for all~$a\in J$ \cite{Muhly-Solel:Tensor}. Fowler defined the Cuntz--Pimsner algebra of~$\Hilm=(\Hilm_p)_{p\in P}$ to be the universal $\Cst$\nb-algebra for representations that are Cuntz--Pimsner covariant on~$J_p\coloneqq \varphi^{-1}(\Comp(\Hilm_p))$ for all~$p\in P$ \cite[Proposition 2.9]{Fowler:Product_systems}. We denote Fowler's Cuntz--Pimsner algebra by~$\CP_{\Hilm}$. 

Our next result provides sufficient conditions for $A\times_{\Hilm}P$ to coincide with Fowler's Cuntz--Pimsner algebra if~$P$ is a cancellative right Ore monoid, that is,  $P$ is cancellative and~$pP\cap qP\neq\emptyset$ for all~$p,q\in P$. In this case, $P$ can be embedded in a group~$G$ so that~$G=PP^{-1}$.

\begin{prop}\label{prop:fowler_cuntz_pimsner_algebra} Let $P$ be a cancellative right Ore monoid and let~$\Hilm=(\Hilm_p)_{p\in P}$ be a product system that is faithful and proper. Then $A\times_{\Hilm}P$ is canonically isomorphic to Fowler's Cuntz--Pimsner algebra.
\begin{proof} Observe that~$J_p=A$ for all $p\in P$. We begin by verifying that the representation of~$\Hilm$ in~$A\times_{\Hilm}P$ is Cuntz--Pimsner covariant on~$A$ for each~$p$ in~$P$.  Indeed, set~$F\coloneqq\{p\}$. Since~$\Hilm$ is faithful, it follows that $I_{r^{-1}(r\vee p)}=\{0\}$ if $r\not\in pP$. Thus $$\Hilm_{\{p\}}=\bigoplus_{\substack{r\in pP}}\Hilm_r.$$ Hence~$j_{\Hilm}$ is Cuntz--Pimsner covariant on~$J_p$ for each~$p\in P$. We then obtain a \Star homomorphism $j\colon\CP_{\Hilm}\rightarrow A\times_{\Hilm}P$. 

By \cite[Theorem 3.16]{Albandik-Meyer:Product}, we may view~$\CP^e_{\Hilm}$ as the inductive limit of~$\big(\Comp(\Hilm_p)\big)_{p\in P}$. Thus~$j$ is faithful on $\CP^e_{\Hilm}$ since it is so on all of the $\Comp(\Hilm_p)$'s. The quotient map $q\colon\Toep_{\Hilm}\rightarrow A\times_{\Hilm}P$ is the composition of~$j$ with the quotient map from~$\Toep_{\Hilm}$ to~$\CP_{\Hilm}$. Hence the representation of $\Hilm$ in this latter $\Cst$\nb-algebra must vanish on $J_{\infty}$. The induced \Star homomorphism~$A\times_{\Hilm}P\rightarrow\CP_{\Hilm}$ is then the inverse of~$j$.
\end{proof}
\end{prop}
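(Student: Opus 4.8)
The plan is to build mutually inverse $\Cst$\nb-homomorphisms between $\CP_{\Hilm}$ and $A\times_{\Hilm}P$. Since $\Hilm$ is proper, the ideals $J_p=\varphi_p^{-1}(\Comp(\Hilm_p))$ all equal $A$, so $\CP_{\Hilm}$ is simply the universal $\Cst$\nb-algebra for representations $\psi$ of $\Hilm$ satisfying $\psi_e(a)=\psi^{(p)}(\varphi_p(a))$ for every $a\in A$ and $p\in P$. The two halves of the argument are: (i) verify that the universal strongly covariant representation $j_{\Hilm}$ is Cuntz--Pimsner covariant on $A$, which yields a surjection $j\colon\CP_{\Hilm}\to A\times_{\Hilm}P$; and (ii) show that $j$ is injective by identifying the kernel of the quotient map $\pi\colon\Toep_{\Hilm}\to\CP_{\Hilm}$ with the ideal $J_{\infty}$ of Lemma~\ref{lem:ideal_description}. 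Throughout we use Lemma~\ref{lem:well_definedness} to embed $P$ into $G=PP^{-1}$, which exists because $P$ is cancellative right Ore.

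For (i), fix $p\in P$ and take $F=\{p\}$ in the construction of the correspondences $\Hilm_F$. Because $\Hilm$ is faithful, every $\ker\varphi_s$ is trivial, so for $r\in P$ the ideal $I_{r^{-1}K_{\{r,p\}}}$ equals $A$ when $r\in K_{\{r,p\}}$, i.e.\ when $r\in pP$, and equals $\{0\}$ otherwise; here $K_{\{r,p\}}=rP\cap pP$ is always nonempty since $P$ is right Ore, so the trivial alternative in the definition of $I_{r^{-1}K_{\{r,p\}}}$ never intervenes. Hence $\Hilm_{\{p\}}=\bigoplus_{r\in pP}\Hilm_r$, and on each summand $\Hilm_r=\Hilm_{pq}$ with $r=pq$ the left action $\varphi_r(a)$ agrees with $\iota_p^{r}(\varphi_p(a))$ by associativity of the multiplication maps. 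Consequently, for every finite $F'\supseteq\{p\}$ the degree-$e$ element $\tilde t(a)-\tilde t^{(p)}(\varphi_p(a))$ of $\Toep^e_{\Hilm}$ acts summandwise on $\Hilm_{F'}\subseteq\bigoplus_{r\in pP}\Hilm_r$ as $\varphi_r(a)-\iota_p^r(\varphi_p(a))=0$, so $\|\tilde t(a)-\tilde t^{(p)}(\varphi_p(a))\|_{F'}=0$ eventually, whence this element lies in $J_e$ and therefore in $J_{\infty}$. Thus $j_e(a)=j^{(p)}(\varphi_p(a))$ in $A\times_{\Hilm}P$, i.e.\ $j_{\Hilm}$ is Cuntz--Pimsner covariant on $J_p=A$ for all $p$; the universal property of $\CP_{\Hilm}$ gives $j\colon\CP_{\Hilm}\to A\times_{\Hilm}P$, and it is surjective because $A\times_{\Hilm}P$ is generated by $j_{\Hilm}(\Hilm)$.

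For (ii), invoke \cite[Theorem 3.16]{Albandik-Meyer:Product}: for a proper product system over a cancellative right Ore monoid, the fixed-point algebra $\CP^e_{\Hilm}$ is the inductive limit of $\bigl(\Comp(\Hilm_p)\bigr)_{p\in P}$ along the maps induced by the multiplication isomorphisms (these take values in $\Comp(\Hilm_{pq})$ because $\Hilm$ is proper, and are injective as conjugations by isomorphisms). Now $j_{\Hilm}$ is isometric on each $\Hilm_p$ — indeed $\|j_p(\xi)\|^2=\|j_e(\braket{\xi}{\xi})\|$ and $j_e$ is injective by Proposition~\ref{thm:strongly_covariant} — so the maps $j^{(p)}\colon\Comp(\Hilm_p)\to A\times_{\Hilm}P$ are injective, and they are compatible with the connecting maps by the covariance relations of (i); hence $j$ is faithful on $\CP^e_{\Hilm}$. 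Finally, from $q=j\circ\pi$, restrict to $\Toep^e_{\Hilm}$: here $\pi(\Toep^e_{\Hilm})=\CP^e_{\Hilm}$ and $\ker q\cap\Toep^e_{\Hilm}=J_e$ by Lemma~\ref{lem:ideal_description}, so faithfulness of $j$ on $\CP^e_{\Hilm}$ forces $\ker\pi\cap\Toep^e_{\Hilm}=J_e$. Thus $J_e\subseteq\ker\pi$, so the ideal it generates satisfies $J_{\infty}\subseteq\ker\pi$; combined with $\ker\pi\subseteq\ker q=J_{\infty}$ this gives $\ker\pi=J_{\infty}$, and $\pi$ descends to an isomorphism $A\times_{\Hilm}P=\Toep_{\Hilm}/J_{\infty}\cong\CP_{\Hilm}$ inverse to $j$.

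I expect the main obstacle to be part (ii), and specifically the dependence on \cite[Theorem 3.16]{Albandik-Meyer:Product}: it is the colimit description of $\CP^e_{\Hilm}$ that lets one bootstrap faithfulness of $j$ on $A$ up to faithfulness on the whole unit fibre, which is exactly what powers the kernel computation. The supporting step in (i) — recognising that faithfulness together with the Ore property collapse $\Hilm_{\{p\}}$ to $\bigoplus_{r\in pP}\Hilm_r$ and thereby force the Cuntz--Pimsner relation to hold modulo $J_e$ — also requires care, since one must track the operators $t_{F'}$ on the correspondences $\Hilm_{F'}$ for all large finite $F'$.
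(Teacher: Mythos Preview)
Your proposal is correct and follows essentially the same route as the paper's proof: you verify Cuntz--Pimsner covariance of $j_{\Hilm}$ via the computation $\Hilm_{\{p\}}=\bigoplus_{r\in pP}\Hilm_r$, invoke \cite[Theorem 3.16]{Albandik-Meyer:Product} to obtain faithfulness of $j$ on the unit fibre, and then compare kernels through the factorisation $q=j\circ\pi$. Your write-up is in fact more explicit than the paper's in unwinding why $\ker\pi=J_\infty$; the one cosmetic slip is that the embedding $P\hookrightarrow G=PP^{-1}$ is Ore's theorem rather than Lemma~\ref{lem:well_definedness}, which only guarantees independence of the covariance notion from the choice of group.
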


We will say that a correspondence~$\Hilm\colon A\leadsto A$ is a \emph{Hilbert $A$\nb-bimodule} if $\Hilm$ carries a structure of \emph{left} Hilbert $A$\nb-module such that $\varphi(\BRAKET{\xi}{\eta})\zeta = \xi\braket{\eta}{\zeta}$
  for all \(\xi, \eta, \zeta\in \Hilm\), where $\BRAKET{\cdot}{\cdot}$ denotes the left $A$\nb-valued inner product.  Let~$\Hilm=(\Hilm_p)_{p\in P}$ be a product system of Hilbert bimodules. For each~$p\in P$, set~$J_{\Hilm_p}\coloneqq\BRAKET{\Hilm_p}{\Hilm_p}$. So~$\Hilm[J]_{\Hilm}=\{J_{\Hilm_p}\}_{p\in P}$ is a family of ideals in~$A$ satisfying $J_{\Hilm_p}=\varphi_p^{-1}(\Comp(\Hilm_p))\cap(\ker\varphi_p)^\perp$ for all~$p\in P$. Let~$\CP_{\Hilm[J]_{\Hilm},\Hilm}$ denote the universal $\Cst$\nb-algebra for representations of~$\Hilm$ that are Cuntz--Pimsner covariant on~$J_{\Hilm_p}$ for all~$p\in P$. 

\begin{prop} Suppose that $\Hilm=(\Hilm_p)_{p\in P}$ is a product system of Hilbert bimodules with the following properties: \begin{enumerate}
\item[\textup{(i)}] $\BRAKET{\Hilm_p}{\Hilm_p}\BRAKET{\Hilm_r}{\Hilm_r}\subseteq\BRAKET{\Hilm_{q}}{\Hilm_{q}}$ for some $q\in pP\cap rP$ whenever $pP\cap rP\neq\emptyset$;
\item[\textup{(ii)}] $\BRAKET{\Hilm_p}{\Hilm_p}\BRAKET{\Hilm_r}{\Hilm_r}=\{0\}$ if $pP\cap rP=\emptyset$;
\end{enumerate} Then $A\times_{\Hilm}P$ is canonically isomorphic to~$\CP_{\Hilm[J]_{\Hilm},\Hilm}$.
\end{prop}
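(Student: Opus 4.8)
The plan is to identify $\CP_{\Hilm[J]_{\Hilm},\Hilm}$ with $A\times_{\Hilm}P$ by showing that the two universal $\Cst$\nb-algebras have the same covariant representations. Concretely, I would first show that the representation $j_{\Hilm}$ of $\Hilm$ in $A\times_{\Hilm}P$ is Cuntz--Pimsner covariant on $J_{\Hilm_p}=\BRAKET{\Hilm_p}{\Hilm_p}$ for every $p\in P$, which produces a $\Star$\nb-homomorphism $j\colon\CP_{\Hilm[J]_{\Hilm},\Hilm}\to A\times_{\Hilm}P$. For the converse direction I would show that the canonical representation $\bar j_{\Hilm}$ of $\Hilm$ in $\CP_{\Hilm[J]_{\Hilm},\Hilm}$ is strongly covariant, i.e.\ vanishes on $J_e$, which by (C2) gives a $\Star$\nb-homomorphism $A\times_{\Hilm}P\to\CP_{\Hilm[J]_{\Hilm},\Hilm}$; the two maps will then be mutually inverse because each is determined on the generators $j_p(\Hilm_p)$, respectively $\bar j_p(\Hilm_p)$.

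For the first step, fix $p\in P$ and $a\in J_{\Hilm_p}$, so $\varphi_p(a)\in\Comp(\Hilm_p)$ and $a\in(\ker\varphi_p)^\perp$; I would write $a=\sum\BRAKET{\xi_i}{\eta_i}$ with $\xi_i,\eta_i\in\Hilm_p$ and use the Hilbert-bimodule identity together with axioms (T1)--(T2) to compute $j_{\Hilm}^{(p)}(\varphi_p(a))-j_e(a)$ on the submodule $\Hilm_F$ for $F=\{p\}$. Since $\Hilm$ consists of Hilbert bimodules, for $r\in pP$ the left action on $\Hilm_r$ restricted to $\Hilm_p$-compacts is implemented faithfully, while for $r\notin pP$ one needs $a\in I_{r^{-1}(r\vee p)}$; here hypothesis (ii) is what forces $a$ to annihilate the fibres $\Hilm_r$ with $pP\cap rP=\emptyset$, so that $t_{\{p\}}^e(a)$ and $t_{\{p\}}^{(p)}(\varphi_p(a))$ agree on $\Hilm_{\{p\}}$. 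Arguing as in the proof of Proposition~\ref{prop:simplification}, this equality on $\Hilm_F$ for all finite $F$ gives $j_e(a)=j_{\Hilm}^{(p)}(\varphi_p(a))$ in $A\times_{\Hilm}P$.

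For the second step I would use the simplification afforded by Proposition~\ref{prop:simplification}: a product system of Hilbert bimodules satisfying (i)--(ii) is compactly aligned (the compact-alignment condition follows from (i) and the identification $\Comp(\Hilm_p)\cong\BRAKET{\Hilm_p}{\Hilm_p}$), so it suffices to check that $\bar j_{\Hilm}$ is Nica covariant and satisfies (C)'. Nica covariance of $\bar j_{\Hilm}$ should follow from (i)--(ii): for $p,q\in P$ with $pP\cap qP=\emptyset$ one uses (ii) and Cuntz--Pimsner covariance to get $\bar j^{(p)}(T)\bar j^{(q)}(S)=0$, and when $pP\cap qP\neq\emptyset$ one picks $r$ with $\BRAKET{\Hilm_p}{\Hilm_p}\BRAKET{\Hilm_q}{\Hilm_q}\subseteq\BRAKET{\Hilm_r}{\Hilm_r}$ and rewrites the product of compacts inside $\Comp(\Hilm_r)$ using the bimodule structure and Cuntz--Pimsner covariance on the relevant ideals. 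Condition (C)' then follows because $\sum_{p\in F}t^{(p)}_F(T_p)=0$ on $\Hilm_F$ can be tested fibrewise, and on the fibre $\Hilm_r$ with $r\in K_F$ the operator is $\varphi_r$ applied to an element of $\BRAKET{\Hilm_r}{\Hilm_r}\subseteq(\ker\varphi_r)^\perp$, which Cuntz--Pimsner covariance on $J_{\Hilm_r}$ translates faithfully into the relation $\sum_{p\in F}\bar j^{(p)}(T_p)=0$.

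I expect the main obstacle to be the bookkeeping in the second step, specifically verifying (C)': one must carefully match, for each $r\in P$, the value of $\sum_{p\in F}t^{(p)}_F(T_p)$ on the direct summand $\Hilm_r I_{r^{-1}(r\vee F)}$ of $\Hilm_F$ with an honest element of $\BRAKET{\Hilm_r}{\Hilm_r}$ acting on $\Hilm_r$, and then invoke injectivity of $\bar j_{\Hilm}$ together with Cuntz--Pimsner covariance to pull the vanishing back to $\CP_{\Hilm[J]_{\Hilm},\Hilm}$. This is essentially the same argument as in Proposition~\ref{prop:simplification}, but one has to keep track of the Hilbert-bimodule inner products and check that hypotheses (i)--(ii) exactly supply the compatibility needed when $r\notin pP$ for some $p\in F$. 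Once (C)' is in hand, the conclusion follows formally from Proposition~\ref{prop:simplification} and the universal properties.
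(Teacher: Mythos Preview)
Your first step is essentially the paper's argument, though you only invoke hypothesis~(ii) to kill the fibres with $pP\cap rP=\emptyset$; the case $r\notin pP$ but $pP\cap rP\neq\emptyset$ is where hypothesis~(i) enters, and you should say why $\varphi_r(a)$ vanishes on $\Hilm_rI_{r^{-1}(r\vee\{p\})}$ there (the paper uses that $\BRAKET{\Hilm_p}{\Hilm_p}\BRAKET{\Hilm_r}{\Hilm_r}\subseteq\BRAKET{\Hilm_q}{\Hilm_q}$ together with $I_{r^{-1}(r\vee\{p\})}\subseteq\ker\varphi_{r^{-1}q}$ and faithfulness of the left action of $\BRAKET{\Hilm_q}{\Hilm_q}$ on~$\Hilm_q$).

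Your second step has a genuine gap: Proposition~\ref{prop:simplification} is stated and proved only for compactly aligned product systems over \emph{quasi-lattice ordered} groups, and the proposition you are proving makes no such assumption on~$(G,P)$. Even if $(G,P)$ were quasi-lattice ordered, your Nica-covariance argument is shaky because hypothesis~(i) gives only \emph{some} $q\in pP\cap rP$, not the least upper bound $p\vee r$ required by the Nica condition; so rewriting $\bar j^{(p)}(T)\bar j^{(q)}(S)$ as $\bar j^{(p\vee q)}(\cdot)$ does not follow directly.

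The paper avoids this entirely and does something much simpler for the converse direction. After constructing $\phi\colon\CP_{\Hilm[J]_{\Hilm},\Hilm}\to A\times_{\Hilm}P$ as in your first step, it observes that conditions~(i) and~(ii) together with Cuntz--Pimsner covariance on each $J_{\Hilm_p}$ force the fixed-point algebra $\CP^e_{\Hilm[J]_{\Hilm},\Hilm}$ to coincide with the copy of~$A$ (every product of compacts collapses into $A$). Since $\phi$ is faithful on~$A$, it is faithful on this fixed-point algebra; because $\phi$ is surjective and grading-preserving and the quotient map $\Toep_{\Hilm}\to A\times_{\Hilm}P$ factors through $\CP_{\Hilm[J]_{\Hilm},\Hilm}$, property~(C3) of $A\times_{\Hilm}P$ then forces $\phi$ to be an isomorphism. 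No direct verification of strong covariance in $\CP_{\Hilm[J]_{\Hilm},\Hilm}$ is needed.
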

\begin{proof} We begin by proving that the canonical representation of~$\Hilm$ in~$A\times_{\Hilm}P$ factors through~$\CP_{\Hilm[J]_{\Hilm},\Hilm}.$ Let~$p\in P$ and let~$r\in P$ be such that~$r\not\in pP$. Condition (ii) in the statement entails~$\varphi_r(\BRAKET{\Hilm_p}{\Hilm_p})\Hilm_r=\{0\}$ if $pP\cap rP=\emptyset$. Suppose that~$pP\cap rP\neq\emptyset$. Then there exists~$q\in pP\cap rP$ such that $$\BRAKET{\Hilm_p}{\Hilm_p}\BRAKET{\Hilm_r}{\Hilm_r}\subseteq\BRAKET{\Hilm_{q}}{\Hilm_{q}}.$$ In particular, if~$F\subseteq G$ is a finite subset and~$p\in F$, it follows that $$\BRAKET{\Hilm_p}{\Hilm_p}\BRAKET{\Hilm_r}{\Hilm_rI_{r^{-1}(r\vee F)}}\subseteq \BRAKET{\Hilm_p}{\Hilm_p}\BRAKET{\Hilm_r}{\Hilm_r\ker\varphi_{r^{-1}q}}=\{0\}$$ because $\BRAKET{\Hilm_{q}}{\Hilm_{q}}$ acts faithfully on~$\Hilm_{q}$. Hence $\varphi_r(\BRAKET{\Hilm_p}{\Hilm_p})\Hilm_rI_{r^{-1}(r\vee F)}=\{0\}$. So given~$a$ in~$\BRAKET{\Hilm_p}{\Hilm_p}$, $t_S^e(a)-t_S^{(p)}(\varphi_p(a))=0$ on~$\Hilm_F$ for all finite subsets~$F$ of~$G$ with~$F\supseteq \{p\}$. This shows that~$j=\{j_p\}_{p\in P}$ is an injective representation of~$\Hilm$ in~$A\times_{\Hilm}P$ that is Cuntz--Pimsner covariant on~$J_{\Hilm_p}=\BRAKET{\Hilm_p}{\Hilm_p}$ for all~$p\in P$. So it induces a \Star homomorphism $\phi\colon \CP_{\Hilm[J]_{\Hilm},\Hilm}\to A\times_{\Hilm}P$. As a consequence, the universal representation of~$\Hilm$ in $\CP_{\Hilm[J]_{\Hilm},\Hilm}$ is injective. Combining this with conditions~(i) and (ii) in the statement, we conclude that the fixed-point algebra~$\CP^e_{\Hilm[J]_{\Hilm},\Hilm}$ for the canonical coaction of~$G$ on~$\CP_{\Hilm[J]_{\Hilm},\Hilm}$ is isomorphic to~$A$. Since~$\phi$ is a surjective grading-preserving \Star homomorphism, the universal property of~$A\times_{\Hilm}P$ tells us that~$\phi$ is an isomorphism.
\end{proof}

\subsection{Semigroup \texorpdfstring{$\Cst$\nb-}{C*-}algebras} The semigroup $\Cst$\nb-algebra as introduced by Murphy in \cite{Murphy:Crossed_semigroups} is the universal $\Cst$\nb-algebra for representations of~$P$ by isometries, also called \emph{isometric} representations. Unlike the group case, the resulting $\Cst$\nb-algebra is usually badly behaved. For instance, it is not nuclear even when the semigroup in question is~$\NN\times\NN$ (see~\cite{murphy1996}). For semigroups that are positive cones of quasi-lattice ordered groups, Nica considered in~\cite{Nica:Wiener--hopf_operators} a sub-class of isometric representations, namely, those satisfying the Nica covariance condition. In this setting, he also introduced a notion of amenability for a quasi-lattice ordered group $(G,P)$ and proved, for instance, that~$(\FF_n,\FF_n^+)$ is amenable. Xin Li realised that one should also take into account the family of right ideals of~$P$ and proposed a construction generalising that of Nica to left cancellative semigroups~\cite{Li:Semigroup_amenability}. In analogy with the group case, he was able to relate amenability of a semigroup to its associated $\Cst$\nb-algebra (see \cite[Section 4]{Li:Semigroup_amenability}). In this subsection, we study the relationship between covariance algebras and the semigroup $\Cst$\nb-algebras of Xin Li. Under a certain assumption involving the family of constructible right ideals of~$P$, we will show that we can recover the semigroup $\Cst$\nb-algebra of Xin Li from the covariance algebra of a certain product system. This is obtained in~\cite[Section 5]{Albandik-Meyer:Product} for Ore monoids. 

Let us first recall Li's construction. Assume that~$G$ is generated by~$P$. Given $\alpha=(p_1,p_2,\ldots,p_{2k})\subseteq P$, define  \begin{equation}\label{eq:finite_set} F_{\alpha}=\{p_{2k}^{-1}p_{2k-1}, p_{2k}^{-1}p_{2k-1}p_{2k-2}^{-1}p_{2k-3},\ldots, p_{2k}^{-1}p_{2k-1}p_{2k-2}^{-1}\cdots p_2^{-1}p_1\}.\end{equation}  Then $K_{\{F_{\alpha}, e\}}$ is a right ideal in~$P$. This corresponds to the right ideal $$ p_{2k}^{-1}p_{2k-1}p_{2k-2}^{-1}\cdots p_2^{-1}p_1P$$ in the notation of~\cite{Li:Semigroup_amenability}. Given words $\alpha_1, \alpha_2,\ldots,\alpha_n$ in $P$, the intersection $$\overset{n}{\bigcap_{\substack{i=1}}}K_{\{F_{\alpha_i},e\}}$$ is again a right ideal in~$P$. Let~$\Hilm[J]$ be the smallest family of right ideals of~$P$ containing the "constructible" right ideals as above and the empty set~$\emptyset$. This is closed under finite intersection. In addition, if~$S\in\mathcal{J}$, then~$pS\in\mathcal{J}$  and~$p^{-1}S\in\mathcal{J}$, where~$pS$ and~$p^{-1}S$ denote the image and pre-image of~$S$, respectively, under left multiplication by~$p$. The following is \cite[Definition 3.2]{Li:Semigroup_amenability}.

\begin{defn} Let $P$ be a subsemigroup of a group $G$. The \emph{semigroup $\Cst$\nb-algebra} of~$P$, denoted by~$\Cst_s(P)$, is the universal $\Cst$\nb-algebra generated by a family of isometries~$\{v_p\vert\, p\in P\}$ and projections~$\{e_S\vert\, S\in\Hilm[J]\}$ satisfying the following:
\begin{enumerate}\label{def:semigroup_Cstaralgebra}
\item[\textup{(i)}] $v_pv_q=v_{pq}$
\item[\textup{(ii)}] $e_{\emptyset}=0$
\item[\textup{(iii)}] $v_{p_1}^*v_{p_2}\cdots v_{p_{2k-2}} v_{p_{2k-1}}^*v_{p_{2k}}=e_{K_{\{F_{\alpha},e\}}}$ whenever~$\alpha=(p_1,p_2,\ldots,p_{2k})$ is a word in~$P$ with~$p_1^{-1}p_2\cdots p_{2k-2}p_{2k-1}^{-1}p_{2k}=e$ in~$G$.
\end{enumerate}
\end{defn}

 The family~$\Hilm[J]$ of right ideals of~$P$ is called \emph{independent} (see \cite[Definition 2.26]{Li:Semigroup_amenability}) if given a right ideal of~$P$ of the form $$S=\underset{i=1}{\overset{m}{\bigcup}}S_i,$$ with $ S_i\in\mathcal{J}$ for all $i\in\{1,\ldots,m\}$, then~$S=S_i$ for some $i\in\{1,\ldots,m\}.$ By \cite[Lemma 3.3]{Li:Semigroup_amenability}, $e_{S_1}e_{S_2}=e_{S_1\cap S_2}$ in~$\Cst_s(P)$ for all $S_1, S_2$ in $\Hilm[J]$ and hence the closed linear span of the projections $\{e_{S}\vert\,S\in\Hilm[J]\}$ is a commutative $\Cst$\nb-subalgebra of~$\Cst_s(P)$. If~$\Hilm[J]$ is independent, this $\Cst$\nb-subalgebra is canonically isomorphic to the $\Cst$\nb-subalgebra of~$\ell^{\infty}(P)$ generated by the characteristic functions on elements of~$\Hilm[J]$~\cite[Corollary 3.4]{Li:Semigroup_amenability}. Let us denote this latter $\Cst$\nb-algebra by~$A$. That is, $$A=\overline{\mathrm{span}}\{\chi_{S}\vert S\in\Hilm[J]\},$$ where~$\chi_S\in\ell^{\infty}(P)$ is the characteristic function on~$S$. This will be the coefficient algebra of our product system~$\Hilm$. The idea is taken from~\cite[Section 5]{Albandik-Meyer:Product}. Our assumption, however, is different: we require~$P$ to be embeddable in a group, as usual. So we follow~\cite[Definition 3.2]{Li:Semigroup_amenability}.   
 
There is a semigroup action~$\beta\colon P\rightarrow\text{End}(A)$ by injective endomorphisms with hereditary range as follows. Let~$\beta_p$ be defined by $\chi_{S}\mapsto\chi_{pS}$. Its range~$\beta_p(A)$ is the corner~$\chi_{pP}A\chi_{pP}$ and hence it is hereditary. This gives us a product system over~$P$ as in~\cite{Larsen:Crossed_abelian}. The correspondence $\Hilm_p\colon A\leadsto A$ is $A\chi_{pP}$ with the following structure: we use the inverse~$\beta_p^{-1}$ to define the~$A$\nb-valued inner product, so that $$\braket{a\chi_{pP}}{b\chi_{pP}}\coloneqq \beta_p^{-1}(\chi_{pP}a^*\cdot b\chi_{pP}).$$ In particular, $\braket{\chi_S\chi_{pP}}{\chi_{pP}}=\chi_{(p^{-1}S)\cap P}$ for all~$S\in \Hilm[J]$. The right action of~$A$ on~$\Hilm$ is implemented by~$\beta_p$. That is, $(b\chi_{pP})\cdot\chi_{S}=b\chi_{pS}.$ The left action is then defined by left multiplication $a\cdot(b\chi_{pP})=ab\chi_{pP}$. Finally, the isomorphism~$\mu_{p,q}\colon\Hilm_p\otimes_A\Hilm_q\cong\Hilm_{pq}$ sends~$a\chi_{pP}\otimes_Ab\chi_{qP}$ to $a\chi_{pP}\beta_p(b)\chi_{pqP}$.

\begin{prop}\label{prop:independent_right_ideals} Suppose that~$\Hilm[J]$ is independent. The semigroup $\Cst$\nb-algebra  $\Cst_s(P)$ is naturally isomorphic to~$A\times_{\Hilm}P$.
\begin{proof} Let us define a \Star homomorphism~$\Cst_s(P)\rightarrow A\times_{\Hilm}P$ by using the universal property of~$\Cst_s(P)$. For each~$p\in P$, put $u_p\coloneqq j_p(\chi_{pP})$. Thus~$u\colon p\mapsto u_p$ is an isometric representation of~$P$ in~$A\times_{\Hilm}P.$ Given~$S\in\Hilm[J]$, set~$\bar{e}_{S}=\chi_S$. In order to prove that this data also satisfies condition (iii) of Definition \ref{def:semigroup_Cstaralgebra}, let $\alpha=(p_1,p_2,\ldots,p_{2k})$ be a word in~$P$ with~$p_1^{-1}p_2\cdots p_{2k-2}p_{2k-1}^{-1}p_{2k}=e$ . Let $F_{\alpha}$ be as in \eqref{eq:finite_set}. Let us show that \begin{equation}\label{eq:semigroup_representation}
t_{F_{\alpha}}\big(\widetilde{t}_{p_1}(\chi_{p_1P})^*\widetilde{t}_{p_2}(\chi_{p_2P})\cdots \widetilde{t}_{p_{2k-1}}(\chi_{p_{2k-1}P})^*\widetilde{t}_{p_{2k}}(\chi_{p_{2k-1}P})-\widetilde{t}(\chi_{K_{\{F_{\alpha},e\}}})\big)=0
\end{equation} on $\Hilm_{F_{\alpha}}$. This is clearly true if $K_{\{F_{\alpha},e\}}=\emptyset$ or~$K_{\{F_{\alpha},e\}}=P$. So let us assume otherwise. The ideal~$I_{e\vee F_{\alpha}}\idealin A$ is generated by the characteristic functions on the right ideals that have empty intersection with $$K_{\{F_{\alpha},e\}}= p_{2k}^{-1}p_{2k-1}p_{2k-2}^{-1}\cdots p_2^{-1}p_1P$$ so that $\chi_{K_{\{F_{\alpha},e\}}}I_{e\vee F_{\alpha}}=0$.  Similarly, let $r\not\in K_{\{F_{\alpha},e\}}$. Observe that $\chi_{K_{\{F_{\alpha},e\}}}$ vanishes on~$\Hilm_r$ whenever~$rP\cap K_{\{F_{\alpha},e\}}=\emptyset$. If $rP\cap K_{\{F_{\alpha},e\}}\neq\emptyset$, then $I_{r^{-1}(r\vee F_{\alpha})}$ consists of those functions in~$A$ that vanish on $P\cap r^{-1}K_{\{F_{\alpha},e\}}$. In particular, \begin{align*}
\varphi_r(\chi_{K_{\{F_{\alpha},e\}}})(\chi_{rP})\cdot I_{r^{-1}(r\vee F_{\alpha})}&=\chi_{K_{\{F_{\alpha},e\}}\cap rP}\beta_r(I_{r^{-1}(r\vee F_{\alpha})})\\
&=\chi_{K_{\{F_{\alpha},e\}}\cap rP}I_{(r\vee F_{\alpha})}=\{0\}.
\end{align*}

For~$r\in K_{\{F_{\alpha},e\}}$, one may easily verify that the left-hand side of~\eqref{eq:semigroup_representation} also vanishes on~$\Hilm_r$. This proves our claim that condition (iii) of Definition~\ref{def:semigroup_Cstaralgebra} is satisfied. So we obtain a \Star homomorphism $\phi\colon\Cst_s(P)\rightarrow A\times_{\Hilm}P.$ This sends~$v_p$ to the isometry~$u_p$ and~$e_{S}$ to~$j_e(\chi_S)$. 

In order to define a representation of~$\Hilm$ in $\Cst_s(P)$, we invoke the assumption that~$\Hilm[J]$ is independent. As mentioned before the statement, in this case the commutative~$\Cst$\nb-subalgebra of~$\Cst_s(P)$ generated by the projections $\{e_{S}\vert\,S\in\Hilm[J]\}$ is canonically isomorphic to~$A$. So there is a \Star homomorphism $A\rightarrow\Cst_s(P)$ which maps $\chi_{S}$ to~$e_S$. Lemmas 2.8 and 3.3 of~\cite{Li:Semigroup_amenability} imply the relations $$v_pe_Sv_p^*=e_{pS},\qquad v_p^*e_Sv_p=e_{p^{-1}S\cap P}$$ in~$\Cst_s(P)$ for all~$p\in P$ and~$S\in\Hilm[J]$. Hence the map which sends~$\chi_{pP}\in\Hilm_p$ to the isometry~$v_p$ together with the \Star homomorphism~$\chi_{S}\mapsto e_S$ gives us a representation of~$\Hilm$ in~$\Cst_s(P)$. The induced \Star homomorphism $\Toep_{\Hilm}\rightarrow\Cst_s(P)$ preserves the $G$\nb-grading for the coaction of~$G$. Moreover, it follows from condition (iii) and the equality~$v_e=1$ that the fixed-point algebra~$\Cst_s(P)^e$ for such a coaction is the $\Cst$\nb-algebra generated by the projections $\{e_S\vert\, S\in\Hilm[J]\}$, which in turn is isomorphic to~$A$. Hence~$\phi$ is injective on~$\Cst_s(P)^e.$ By the same argument employed in the proof of Proposition~\ref{prop:fowler_cuntz_pimsner_algebra}, we conclude that~$\phi$ is an isomorphism.
\end{proof}

\end{prop}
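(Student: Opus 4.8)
The plan is to construct a \Star homomorphism $\phi\colon\Cst_s(P)\to A\times_{\Hilm}P$ from the universal property of $\Cst_s(P)$ and then show it is a grading-preserving isomorphism, the crucial input being that independence of $\Hilm[J]$ forces the fixed-point algebra of $\Cst_s(P)$ to be $A$. For $\phi$, put $u_p\coloneqq j_p(\chi_{pP})$ for $p\in P$ and $\bar e_S\coloneqq j_e(\chi_S)$ for $S\in\Hilm[J]$. Since $j_{\Hilm}$ is a Toeplitz representation and $\mu_{p,q}$ carries $\chi_{pP}\otimes\chi_{qP}$ to $\chi_{pqP}$, the family $\{u_p\}$ is an isometric representation with $u_pu_q=u_{pq}$ and $\bar e_{\emptyset}=j_e(0)=0$, so relations~(i) and~(ii) in the definition of $\Cst_s(P)$ hold automatically. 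The content is relation~(iii): for a word $\alpha=(p_1,\dots,p_{2k})$ in $P$ with $p_1^{-1}p_2\cdots p_{2k-1}^{-1}p_{2k}=e$ in $G$ one must check $u_{p_1}^*u_{p_2}\cdots u_{p_{2k-1}}^*u_{p_{2k}}=\bar e_K$, where $K\coloneqq K_{\{F_\alpha,e\}}$ is the constructible right ideal $p_{2k}^{-1}p_{2k-1}\cdots p_2^{-1}p_1P$.

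Write the difference of the two sides as the quotient map $q$ applied to $b\coloneqq\widetilde{t}(\chi_{p_1P})^*\widetilde{t}(\chi_{p_2P})\cdots\widetilde{t}(\chi_{p_{2k}P})-\widetilde{t}(\chi_K)\in\Toep^e_{\Hilm}$. Since $q(b)=0$ amounts to $b\in J_\infty\cap\Toep^e_{\Hilm}=J_e$ by Lemma~\ref{lem:ideal_description}, it suffices to prove $b\in J_e$; and since $\|\cdot\|_{F'}\le\|\cdot\|_F$ for $F'\supseteq F$, it is enough to show that $t_{F_\alpha}(b)$ vanishes on the summand $\Hilm_{F_\alpha}=\bigoplus_{r\in P}\Hilm_rI_{r^{-1}(r\vee F_\alpha)}$. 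This is a summand-by-summand check. Tracking the degrees of the successive intermediate vectors, one sees that $\widetilde{t}(\chi_{p_1P})^*\cdots\widetilde{t}(\chi_{p_{2k}P})$ acts by $0$ on $\Hilm_r$ unless $r\in gP$ for every $g\in F_\alpha$, that is, unless $r\in K$; while $\varphi_r(\chi_K)$ acts by $0$ on $\Hilm_r$ if $rP\cap K=\emptyset$, and if $rP\cap K\ne\emptyset$ the ideal $I_{r^{-1}(r\vee F_\alpha)}$ — which, by the construction before Definition~\ref{def:strong_covariance}, consists of the functions in $A$ vanishing on $P\cap r^{-1}K$ — is such that $\varphi_r(\chi_K)$ vanishes on $\Hilm_rI_{r^{-1}(r\vee F_\alpha)}$. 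So both terms vanish on the $r$-th summand whenever $r\notin K$; and for $r\in K$ one has $I_{r^{-1}(r\vee F_\alpha)}=A$ (then $r\in K_{\{r,g\}}$ for all $g\in F_\alpha$) and $rP\subseteq K$, so a direct computation with $\mu_{p,q}$ and $\beta_p$ shows the two terms agree on $\Hilm_r$, both being $\id_{\Hilm_r}$. Therefore $b\in J_e$, relation~(iii) holds, and $\phi$ exists, with $\phi(v_p)=u_p$ and $\phi(e_S)=j_e(\chi_S)$; it is surjective because $j_p(\chi_S\chi_{pP})=j_e(\chi_S)u_p=\phi(e_Sv_p)$ and such elements generate $A\times_{\Hilm}P$.

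It remains to identify $\Cst_s(P)^e$, and this is where independence enters. By \cite[Corollary 3.4]{Li:Semigroup_amenability}, independence of $\Hilm[J]$ makes the commutative $\Cst$\nb-subalgebra of $\Cst_s(P)$ generated by $\{e_S\}_{S\in\Hilm[J]}$ canonically isomorphic to $A$, so $\chi_S\mapsto e_S$ extends to a \Star homomorphism $A\to\Cst_s(P)$; combining it with $\chi_{pP}\mapsto v_p$ and the relations $v_pe_Sv_p^*=e_{pS}$ and $v_p^*e_Sv_p=e_{p^{-1}S\cap P}$ from \cite[Lemmas 2.8 and 3.3]{Li:Semigroup_amenability}, one checks (T1) and (T2) and thus obtains a Toeplitz representation of $\Hilm$ in $\Cst_s(P)$, hence a surjective \Star homomorphism $\Toep_{\Hilm}\to\Cst_s(P)$ that intertwines the generalised gauge coaction of $G$ with the coaction of $G$ on $\Cst_s(P)$ determined by $v_p\mapsto v_p\otimes u_p$. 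Thus $\Cst_s(P)$ carries a topological $G$\nb-grading whose unit fibre, the image of $\Toep^e_{\Hilm}$, is — by relation~(iii) and $v_e=1$ — exactly $\overline{\mathrm{span}}\{e_S:S\in\Hilm[J]\}\cong A$. Under this isomorphism $\phi$ restricts on $\Cst_s(P)^e$ to $j_e\colon A\to A\times_{\Hilm}P$, which is faithful by~(C3); so $\phi$ is injective on $\Cst_s(P)^e$.

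Finally, as in the proof of Proposition~\ref{prop:fowler_cuntz_pimsner_algebra}, $\phi$ intertwines the two canonical coactions of $G$, so $\ker\phi$ is a graded ideal; for $x\in\ker\phi\cap\Cst_s(P)^g$ one has $x^*x\in\ker\phi\cap\Cst_s(P)^e=\{0\}$, whence $x=0$ and $\ker\phi=\{0\}$. Together with surjectivity this makes $\phi$ an isomorphism. I expect the main obstacle to be the verification of relation~(iii), namely that $b$ lies in $J_e$: this is where the combinatorics of the constructible right ideals of $P$ has to be matched against the definition of the ideals $I_{r^{-1}(r\vee F_\alpha)}$ and of strong covariance. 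Everything else is routine bookkeeping with the gauge coaction, together with the appeal to Li's description of $\Cst_s(P)$ under independence.
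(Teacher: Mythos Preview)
Your proof is correct and follows essentially the same route as the paper: build $\phi$ from the universal property of $\Cst_s(P)$ by verifying relation~(iii) via the vanishing of $t_{F_\alpha}(b)$ on $\Hilm_{F_\alpha}$, then use independence of $\Hilm[J]$ to identify $\Cst_s(P)^e\cong A$ and deduce that $\phi$ is faithful on the fixed-point algebra. The only minor difference is the last step: the paper, citing Proposition~\ref{prop:fowler_cuntz_pimsner_algebra}, constructs the inverse of $\phi$ by showing that the Toeplitz representation $\Toep_{\Hilm}\to\Cst_s(P)$ kills $J_e$ (since $\phi$ is faithful on $\Cst_s(P)^e$ and $\phi\circ\psi=q$), whereas you argue directly that $\ker\phi$ is a graded ideal meeting the unit fibre trivially; both arguments are standard and equivalent here.
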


The proof of the previous proposition shows that, in general, $A\times_{\Hilm}P$ is a quotient of~$\Cst_s(P)$. It is isomorphic to the $\Cst$\nb-algebra ${\Cst}_s^{(\cup)}(P)$ in the notation of~\cite{Li:Semigroup_amenability}. Indeed, let~$$\Hilm[J]^{\cup}\coloneqq\bigg\{\underset{i=1}{\overset{m}{\bigcup}}S_i\vert S_i\in\mathcal{J}\bigg\}.$$ Let ${\Cst}^{(\cup)}_s(P)$ be the universal $\Cst$\nb-algebra generated by isometries~$\{v_p\vert\, p\in P\}$ and projections~$\{e_S\vert\, S\in\Hilm[J]^{\cup}\}$ satisfying the conditions (i)--(iii) of Definition \ref{def:semigroup_Cstaralgebra} with the additional relation
\begin{enumerate}
\item[\textup{(iv)}] $e_{S_1\cup S_2}=e_{S_1}+e_{S_2}-e_{S_1\cap S_2}$ for all $S_1, S_2\in\Hilm[J]^{\cup}$.
\end{enumerate}

The $\Cst$\nb-algebra ${\Cst}^{(\cup)}_s(P)$ coincides with~${\Cst}_s(P)$ whenever~$\Hilm[J]$ is independent (see \cite[Proposition 2.24]{Li:Semigroup_amenability}). The next result generalises Proposition~\ref{prop:independent_right_ideals}.

\begin{cor} The semigroup $\Cst$\nb-algebra ${\Cst}^{(\cup)}_s(P)$ is naturally isomorphic to~$A\times_{\Hilm}P.$
\begin{proof} It follows from~\cite[Lemma 3.3]{Li:Semigroup_amenability} and \cite[Corollary 2.22]{Li:Semigroup_amenability} that the $\Cst$\nb-subalgebra of ${\Cst}^{(\cup)}_s(P)$ generated by the $e_S$'s is naturally isomorphic to~$A$. Again condition (iii) of Definition \ref{def:semigroup_Cstaralgebra} implies that this $\Cst$\nb-subalgebra coincides with the fixed-point algebra for the canonical coaction of~$G$ on~${\Cst}^{(\cup)}_s(P)$. Now we may employ the same argument used in the proof of Proposition \ref{prop:independent_right_ideals} to obtain an isomorphism $\Cst_s(P)\cong A\times_{\Hilm}P.$
\end{proof}

\end{cor}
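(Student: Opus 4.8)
The plan is to construct mutually inverse \Star homomorphisms between ${\Cst}^{(\cup)}_s(P)$ and $A\times_{\Hilm}P$, following the same scheme as in the proof of Proposition~\ref{prop:independent_right_ideals}. The only genuinely new ingredient is that, even without independence of $\Hilm[J]$, the additional relation~(iv) forces the $\Cst$\nb-subalgebra $D$ of ${\Cst}^{(\cup)}_s(P)$ generated by $\{e_S\mid S\in\Hilm[J]^{\cup}\}$ to be naturally isomorphic to $A=\overline{\mathrm{span}}\{\chi_S\mid S\in\Hilm[J]\}$; this is precisely the content of~\cite[Lemma 3.3]{Li:Semigroup_amenability} together with~\cite[Corollary 2.22]{Li:Semigroup_amenability}, the point being that the characteristic functions obey $\chi_{S_1\cup S_2}=\chi_{S_1}+\chi_{S_2}-\chi_{S_1\cap S_2}$ in $\ell^{\infty}(P)$.

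First I would use the universal property of ${\Cst}^{(\cup)}_s(P)$ to produce a \Star homomorphism $\phi\colon{\Cst}^{(\cup)}_s(P)\to A\times_{\Hilm}P$. Put $u_p\coloneqq j_p(\chi_{pP})$, which is an isometry by~(T2), and $\bar{e}_S\coloneqq j_e(\chi_S)$ for $S\in\Hilm[J]^{\cup}$. Relations~(i) and~(ii) of Definition~\ref{def:semigroup_Cstaralgebra} follow at once from~(T1) and the facts that $j_e$ is a \Star homomorphism with $\chi_\emptyset=0$, while relation~(iv) holds since $j_e$ is linear and multiplicative. The only relation requiring work is~(iii), and this is exactly the computation already carried out for~\eqref{eq:semigroup_representation} in the proof of Proposition~\ref{prop:independent_right_ideals}: for a word $\alpha=(p_1,\dots,p_{2k})$ in $P$ with $p_1^{-1}p_2\cdots p_{2k-1}^{-1}p_{2k}=e$, one checks that $t_{F_\alpha}$ annihilates $\widetilde{t}_{p_1}(\chi_{p_1P})^*\cdots\widetilde{t}_{p_{2k}}(\chi_{p_{2k}P})-\widetilde{t}(\chi_{K_{\{F_\alpha,e\}}})$ on $\Hilm_{F_\alpha}$, hence on $\Hilm_{F'}$ for every finite $F'\supseteq F_\alpha$, so this difference lies in $J_e$ and maps to $0$ in $A\times_{\Hilm}P$; no step of that argument uses independence of $\Hilm[J]$. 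Thus $\phi$ exists with $\phi(v_p)=u_p$ and $\phi(e_S)=j_e(\chi_S)$.

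For the reverse map, I would build a Toeplitz representation $\pi=\{\pi_p\}_{p\in P}$ of $\Hilm$ in ${\Cst}^{(\cup)}_s(P)$. By the cited results of Li, $\chi_S\mapsto e_S$ extends to an injective \Star homomorphism $\pi_e\colon A\to{\Cst}^{(\cup)}_s(P)$ with image $D$. Using the identities $v_pe_Sv_p^*=e_{pS}$ and $v_p^*e_Sv_p=e_{p^{-1}S\cap P}$ from Lemmas~2.8 and~3.3 of~\cite{Li:Semigroup_amenability}, one checks that $\pi_p(a\chi_{pP})\coloneqq\pi_e(a)v_p$ is a well-defined linear map on $\Hilm_p=A\chi_{pP}$ and that $(\pi_e,\{\pi_p\})$ satisfies~(T1) and~(T2), reading these off from the module structure of $\Hilm$. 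The resulting \Star homomorphism $\widetilde{\pi}\colon\Toep_{\Hilm}\to{\Cst}^{(\cup)}_s(P)$ satisfies $\widetilde{\pi}\circ\widetilde{t}=\pi$ and intertwines the generalised gauge coaction with the canonical coaction of $G$ on ${\Cst}^{(\cup)}_s(P)$ (the one with $v_p\mapsto v_p\otimes u_p$, $e_S\mapsto e_S\otimes u_e$), so it is grading-preserving; by~(iii) and $v_e=1$ the fixed-point algebra of ${\Cst}^{(\cup)}_s(P)$ for that coaction is exactly $D$, whence $\widetilde{\pi}(\Toep^e_{\Hilm})\subseteq D$. Comparing the two representations of $\Hilm$ on a spanning set gives $\phi\circ\pi=j_{\Hilm}$, hence $\phi\circ\widetilde{\pi}=q$ with $q\colon\Toep_{\Hilm}\to A\times_{\Hilm}P$ the quotient map; in particular $\phi(\widetilde{\pi}(J_e))=q(J_e)=0$. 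Since $\widetilde{\pi}(J_e)\subseteq D$ and $\phi$ restricted to $D$ corresponds under $D\cong A$ to the injective map $j_e$ (Proposition~\ref{thm:strongly_covariant}), this forces $\widetilde{\pi}(J_e)=0$, and then $\widetilde{\pi}(J_\infty)=0$ by Lemma~\ref{lem:ideal_description}. Hence $\widetilde{\pi}$ descends to $\overline{\widetilde{\pi}}\colon A\times_{\Hilm}P\to{\Cst}^{(\cup)}_s(P)$.

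It then remains to verify that $\overline{\widetilde{\pi}}$ and $\phi$ are mutually inverse: $\phi\circ\overline{\widetilde{\pi}}\circ q=\phi\circ\widetilde{\pi}=q$ forces $\phi\circ\overline{\widetilde{\pi}}=\id$ because $q$ is surjective, and $\overline{\widetilde{\pi}}\circ\phi$ fixes every generator $v_p$ and $e_S$ and is therefore the identity on ${\Cst}^{(\cup)}_s(P)$. I expect the only real obstacle to be bookkeeping: checking that each step of Proposition~\ref{prop:independent_right_ideals} survives the loss of independence. This comes down to two facts, both of which hold: relation~(iii), whose verification never invoked independence, still forces the fixed-point algebra of ${\Cst}^{(\cup)}_s(P)$ to equal $D$; and relation~(iv), via~\cite[Lemma 3.3, Corollary 2.22]{Li:Semigroup_amenability}, still identifies $D$ with the correct coefficient algebra $A$.
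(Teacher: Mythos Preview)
Your proposal is correct and follows essentially the same route as the paper: you invoke \cite[Lemma 3.3, Corollary 2.22]{Li:Semigroup_amenability} to identify the diagonal $D$ with $A$, use relation~(iii) to see that $D$ is the fixed-point algebra of the coaction, and then run the argument of Proposition~\ref{prop:independent_right_ideals} verbatim; the paper's proof is just a terser summary of exactly these steps. Your explicit unwinding of why $\widetilde{\pi}$ annihilates $J_e$ (via $\phi|_D$ corresponding to $j_e$) is the content of the reference to the argument of Proposition~\ref{prop:fowler_cuntz_pimsner_algebra} in the paper's proof of Proposition~\ref{prop:independent_right_ideals}.
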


\subsection{Crossed products by interaction groups} In this subsection, we will show how Exel's crossed products by interaction groups fit into our approach. This notion of crossed products was introduced in \cite{Exel:New_look} in order to study semigroups of unital and injective endomorphisms. We first recall some concepts from his work, although many details will be omitted. An \emph{interaction group} is a triple $(A,G,V)$, where~$A$ is a unital $\Cst$\nb-algebra, $G$ is a group and~$V$ is a \emph{partial representation} of~$G$ in the Banach algebra of bounded operators on~$A$. This consists of a family~$\{V_g\}_{g\in G}$ of continuous operators on~$A$ with $V_1=\id_A$ and~$$V_gV_hV_{h^{-1}}=V_{gh}V_{h^{-1}},\quad V_{g^{-1}}V_gV_{h}=V_{g^{-1}}V_{gh}$$ for all~$g,h\in G.$ It follows that~$E_g\coloneqq V_gV_{g^{-1}}$ is an idempotent for each~$g\in G$ and $E_gE_h=E_gE_h$, $g,h\in G$. The partial representation is also assumed to satisfy the following conditions:
\begin{enumerate}
\item[\textup{(i)}] $V_g$ is a positive map,
\item[\textup{(ii)}] $V_g(1)=1$,
\item[\textup{(iii)}] $V_g(ab)=V_g(a)V_g(b)$ if~$a$ or~$b$ belong to the range of~$V_{g^{-1}}$.
\end{enumerate}

For all~$g\in G$, the idempotent~$E_g$ is a conditional expectation onto the range of~$V_g$. An interaction group is said to be \emph{nondegenerate} if~$E_g$ is faithful for all~$g$ in~$G$. That is,  $E_g(a^*a)=0$ implies $a=0$ (see \cite[Definition 3.3]{Exel:New_look}). 

Frow now on let us fix a nondegenerate interaction group~$(A,G,V)$. Given a unital $\Cst$\nb-algebra~$B$, recall that $v\colon G\rightarrow B$ is a~\emph{\Star partial representation} if it is a partial representation satisfying~$v_g^*=v_{g^{-1}}$ for all $g\in G$. A \emph{covariant representation} of $(A,G,V)$ in~$B$ is a pair $(\pi,v)$, where~$\pi\colon A\rightarrow B$ is a unital \Star homomorphism and~$v$ is a \Star partial representation of~$G$ in~$B$ such that $$v_g\pi(a)v_{g^{-1}}=\pi(V_g(a))v_gv_{g^{-1}}.$$ The \emph{Toeplitz algebra} of $(A,G,V)$, denoted by~$\Toep(A,G,V)$, is the universal $\Cst$\nb-algebra for covariant representations of~$(A,G,V)$. It is generated by a copy of~$A$ and elements~$\{\widehat{s}_g\}_{g\in G}$ so that 
$\widehat{s}\colon g\mapsto\widehat{s}_g$ is a \Star partial representation and the pair~$(j_V,\widehat{s})$ is a covariant representation of~$(A,G,V)$ in~$\Toep(A,G,V)$, where $j_V\colon A\rightarrow\Toep(A,G,V)$ denotes the canonical embedding. 

In order to recall the notion of redundancy introduced by Exel in~\cite{Exel:New_look}, let us first define certain subspaces of~$\Toep(A,G,V)$. Given a word $\alpha=(g_1,g_2,\ldots,g_n)$ in~$G$, set $$\widehat{s}_\alpha=\widehat{s}_{g_1}\widehat{s}_{g_2}\cdots \widehat{s}_{g_n}.$$ Let $\widehat{\Hilm[M]}_{\alpha}=j_V(A)\widehat{s}_{\alpha}j_V(A)$ and $e_{\alpha}\coloneqq \widehat{s}_{\alpha}\widehat{s}_{\alpha^{-1}}$, where $\alpha^{-1}=(g^{-1}_n,\cdots,g_2^{-1},g_1^{-1})$. Then $\widehat{s}_{\alpha}j_V(a)\widehat{s}_{\alpha^{-1}}=j_V(V_{\alpha}(a))e_{\alpha}$ and by \cite[Proposition 2.7]{Exel:New_look},~$e_{\alpha}$ is also an idempotent. The subspace $\widehat{\Hilm[Z]}_{\alpha}$ associated to the word~$\alpha$ will be the closed linear span of elements of the form $$j_V(a_0)\widehat{s}_{g_1}j_V(a_1)\widehat{s}_{g_2}\cdots\widehat{s}_{g_n}j_V(a_n)$$ with $a_0,a_1, a_2,\ldots,a_n\in A$. We set $\widehat{\Hilm[Z]}_{\alpha}=j_V(A)$ in case $\alpha$ is the empty word. Observe that we always have $\widehat{\Hilm[M]}_{\alpha}\subseteq\widehat{\Hilm[Z]}_{\alpha}$. We also associate a finite subset of~$G$ to the word~$\alpha$ by letting $$\mu(\alpha)=\{e,g_1,g_1g_2,\ldots,g_1g_2\cdots g_n\},$$ so that $\mu(\alpha)=\{e\}$ if~$\alpha$ is the empty word. We further let~$\overset{.}{\alpha}=g_1g_2\cdots g_n.$ If~$\overset{.}{\alpha}=e$, it follows that $\mu(\alpha)=\mu(\alpha^{-1})$. We denote by $\Hilm[W]_{\alpha}$ the set of all words~$\beta$ in~$G$ with $\mu(\beta)\subseteq\mu(\alpha)$ and $\overset{.}{\beta}=e$ and let $$\widehat{\Hilm[Z]}^{\mu(\alpha)}\coloneqq\overline{\mathrm{span}}\{c_{\beta}\vert\, c_{\beta}\in\widehat{\Hilm[Z]}_{\beta},\,\beta\in\Hilm[W]_{\alpha}\}.$$ This is a $\Cst$\nb-subalgebra of $\Toep(A,G,V)$ since~$\beta\in\Hilm[W]_{\alpha}$ if and only if~$\beta^{-1}\in\Hilm[W]_{\alpha}$ and~$\Hilm[W]_{\alpha}$ is also closed under concatenation of words (see \cite[Proposition 4.7]{Exel:New_look} for further details). In addition, $\widehat{\Hilm[Z]}^{\mu(\alpha)}\widehat{\Hilm[M]}_{\alpha}\subseteq\widehat{\Hilm[M]}_{\alpha}.$

\begin{defn}Let $\alpha$ be a word in~$G$. We say that $c\in\widehat{\Hilm[Z]}^{\mu(\alpha)}$ is an \emph{$\alpha$\nb-redundancy} if $c\widehat{\Hilm[M]}_{\alpha}=\{0\}$. 
\end{defn}

 The crossed product of~$A$ by~$G$ under $V$, denoted by~$A\rtimes_GV$, is the universal $\Cst$\nb-algebra for covariant representations that vanish on all redundancies. Thus~$A\rtimes_GV$ is isomorphic to the quotient of~$\Toep(A,G,V)$ by the ideal generated by all redundancies. A covariant representation of~$(A,G,V)$ that vanishes on such an ideal was called \emph{strongly covariant} by Exel.  He was able to prove that~$A$ is embedded into~$A\rtimes_GV$. The crossed product carries a canonical~$G$\nb-grading, and a representation of $A\rtimes_GV$ is faithful on its fixed-point algebra if and only if it is faithful on~$A$.

 If~$P$ is a subsemigroup of~$G$, sometimes an action of~$P$ on a $\Cst$-algebra~$A$ may be enriched to an interaction group $(A,G,V)$ so that~$V_p=\alpha_p$ for all~$p\in P$. Under certain assumptions, $V$ is unique if it exists and $A\rtimes_GV$ is generated by~$A$ and isometries~$\{v_p\}_{p\in P}$ \cite[Theorem 12.3]{Exel:New_look}. We will see that if $P$ is reversible, in the sense that~$pP\cap qP\neq\emptyset$ and~$Pp\cap Pq\neq\emptyset$ for all $p,q\in P$, and~$G=P^{-1}P=PP^{-1}$, then $A\rtimes_GV$ can be obtained from a covariance algebra of a certain product system if $\{V_p\}_{p\in P}$ generates the image of~$G$ under~$V$. So we will assume that~$V$ is an interaction group which extends an action of $P$ by endomorphisms of~$A$ and~$V_{p^{-1}}\circ\alpha_p=\id_A$. This holds if and only if the \Star partial representation of~$G$ in~$A\rtimes_GV$ restricts to an isometric representation of~$P$.
 
 \begin{lem}\label{lem:isometry} Let $(i,s)$ denote the representation of $(A,G,V)$ in $A\rtimes_GV$. Then~$s_p$ is an isometry if and only if $V_{p^{-1}}\circ\alpha_p=\id_A$. 
 \begin{proof} Suppose that $V_{p^{-1}}\circ\alpha_p=\id_A$. Let us prove that $\widehat{s}_p^*\widehat{s}_p-1$ vanishes on $\widehat{\Hilm[M]}_{(p^{-1})}=j_V(A)\widehat{s}_{p^{-1}}j_V(A)$. Since $\widehat{s}$ is a \Star partial representation of~$G$, one has that $\widehat{s}_{p^{-1}}=\widehat{s}_p^*$. Put $\beta_1=(p^{-1},p)$ and $\beta_2=(e)$. So both $\beta_1$ and $\beta_2$ belong to $\Hilm[W]_{(p^{-1})}$ and hence $\widehat{s}_p^*\widehat{s}_p-1\in\widehat{\Hilm[Z]}^{\{e,p^{-1}\}}$. Thus all we must do is prove that~$$(\widehat{s}_p^*\widehat{s}_p-1)j_V(A)\widehat{s}_p^*j_V(A)=\{0\}.$$ To do so, let $a\in A$. Then $$\widehat{s}_p^*\widehat{s}_pj_V(a)\widehat{s}_p^*=\widehat{s}_p^*j_V(V_p(a))\widehat{s}_p\widehat{s}_p^*=j_V(V_{p^{-1}}(\alpha_p(a)))\widehat{s}_p^*\widehat{s}_p\widehat{s}_p^*=j_V(a)\widehat{s}_p^*.$$ This proves that $\widehat{s}_p^*\widehat{s}_p-1$ is a redundancy. Hence $s_p$ is an isometry in~$A\rtimes_GV$.
 
 Now assume that~$s_p$ is an isometry. For each~$a$ in $A$, $$i(a)=s_p^*s_pi(a)s_p^*s_p=i(V_{p^{-1}}(\alpha_p(a))).$$ This shows that~$V_{p^{-1}}\circ\alpha_p=\id_A$ because~$A$ is embedded into $A\rtimes_GV$. 
 \end{proof}
 
 \end{lem}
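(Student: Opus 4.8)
The plan is to carry out both implications inside Exel's Toeplitz algebra $\Toep(A,G,V)$ and then descend to the quotient $A\rtimes_GV$; recall that here $V$ extends an action $\alpha$ of $P$ by endomorphisms, so that $V_p=\alpha_p$ for $p\in P$. For the implication ``$V_{p^{-1}}\circ\alpha_p=\id_A$ $\Rightarrow$ $s_p$ is an isometry'', the key step is to show that $\widehat{s}_p^*\widehat{s}_p-1\in\Toep(A,G,V)$ is a redundancy for the one-letter word $\alpha=(p^{-1})$. Since $A\rtimes_GV$ is the quotient of $\Toep(A,G,V)$ by the ideal generated by all redundancies, and the quotient map sends $\widehat{s}_p$ to $s_p$ and $j_V$ to $i$, this gives $s_p^*s_p=1$, which is exactly the assertion that $s_p$ is an isometry.

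To check the redundancy condition I would argue in two steps. First, $\mu((p^{-1}))=\{e,p^{-1}\}$, and both the empty word and the word $(p^{-1},p)$ belong to $\Hilm[W]_{(p^{-1})}$: indeed $\mu((p^{-1},p))=\{e,p^{-1}\}$ and $p^{-1}p=e$. Since $\widehat{\Hilm[Z]}_{()}=j_V(A)\ni 1$ and $\widehat{s}_p^*\widehat{s}_p=\widehat{s}_{p^{-1}}\widehat{s}_p\in\widehat{\Hilm[Z]}_{(p^{-1},p)}$, we obtain $\widehat{s}_p^*\widehat{s}_p-1\in\widehat{\Hilm[Z]}^{\mu((p^{-1}))}$. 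Second, since $\widehat{\Hilm[M]}_{(p^{-1})}=j_V(A)\widehat{s}_p^*j_V(A)$, it suffices to prove $\widehat{s}_p^*\widehat{s}_p\,j_V(a)\,\widehat{s}_p^*=j_V(a)\,\widehat{s}_p^*$ for every $a\in A$. Bracketing the left-hand side as $\widehat{s}_p^*\bigl(\widehat{s}_pj_V(a)\widehat{s}_p^*\bigr)$ and applying the covariance relation with index $p$ turns it into $\widehat{s}_p^*\,j_V(\alpha_p(a))\,\widehat{s}_p\widehat{s}_p^*$; a second application of covariance with index $p^{-1}$ together with the hypothesis $V_{p^{-1}}\circ\alpha_p=\id_A$ rewrites $\widehat{s}_p^*j_V(\alpha_p(a))\widehat{s}_p$ as $j_V(a)\widehat{s}_p^*\widehat{s}_p$, and the standard partial-representation identity $\widehat{s}_p^*\widehat{s}_p\widehat{s}_p^*=\widehat{s}_p^*$ collapses the remaining factor, leaving $j_V(a)\widehat{s}_p^*$. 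Thus $\widehat{s}_p^*\widehat{s}_p-1$ annihilates $\widehat{\Hilm[M]}_{(p^{-1})}$ and is a $(p^{-1})$-redundancy.

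For the converse I would work directly in $A\rtimes_GV$ and assume $s_p^*s_p=1$. Using the unit $1=s_p^*s_p$ and the covariance relation $s_pi(a)s_p^*=i(V_p(a))s_ps_p^*$ (first with index $p$, then with index $p^{-1}$), one computes for each $a\in A$ $$i(a)=s_p^*s_p\,i(a)\,s_p^*s_p=s_p^*\,i(\alpha_p(a))\,s_p=i\bigl(V_{p^{-1}}(\alpha_p(a))\bigr).$$ Since $A$ embeds into $A\rtimes_GV$ via $i$ (Exel's embedding theorem), this forces $a=V_{p^{-1}}(\alpha_p(a))$ for all $a$, i.e.\ $V_{p^{-1}}\circ\alpha_p=\id_A$.

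I expect the only genuinely delicate point to be the annihilation identity $\widehat{s}_p^*\widehat{s}_p\,j_V(a)\,\widehat{s}_p^*=j_V(a)\,\widehat{s}_p^*$: one has to bracket the products carefully so that each use of the covariance relation is legitimate, and to invoke the partial-representation identity for the final factor. Everything else is a routine manipulation with the defining relations of $\Toep(A,G,V)$ and the faithfulness of $i$.
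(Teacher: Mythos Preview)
Your proof is correct and follows essentially the same route as the paper's own argument: both directions are handled identically, showing that $\widehat{s}_p^*\widehat{s}_p-1$ is a $(p^{-1})$-redundancy via the same covariance computation and partial-representation identity, and deducing the converse from the faithfulness of $i$. The only cosmetic differences are that the paper takes $\beta_2=(e)$ rather than the empty word, and compresses the converse computation into a single line.
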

 
 Thus in order to build a product system over~$P$ so that it encodes the interaction group, we suppose that $V_{p^{-1}}\circ\alpha_p=\id_A$ for all~$p\in P$. It follows from \cite[Lemma 2.3]{Exel:New_look} that, for all $p,q\in P$, we have $$V_{q^{-1}}V_{p^{-1}}=V_{q^{-1}p^{-1}},\quad V_{p^{-1}q}=V_{p^{-1}}V_q.$$ Let us now describe the product system associated to~$V$. This is defined as in~\cite{Larsen:Crossed_abelian}. Here we do not require~$P$ to be abelian since we assume $V_{p^{-1}}\circ\alpha_p=\id_A$. We set~$\Hilm_p\coloneqq A$, endowed with the right action of~$A$ through~$a\cdot b\coloneqq a\alpha_p(b)$ and the $A$\nb-valued inner product~$\braket{a}{b}=V_{p^{-1}}(a^*b).$ This provides~$\Hilm_p$ with a structure of right Hilbert $A$\nb-module because~$V_{p^{-1}}(a^*a)=0\Leftrightarrow a=0$ and $V_{p^{-1}}(ab)=V_{p^{-1}}(a)V_{p^{-1}}(b)$ whenever~$b$ lies in the range of~$\alpha_p$. The \Star homomorphism $\varphi_p\colon A\rightarrow\Bound(\Hilm_p)$ is given by the multiplication in~$A$, so that~$\varphi_p(a)\cdot b=ab$ for all~$a\in A$, $b\in\Hilm_p$. The correspondence isomorphism~$\mu_{p,q}\colon\Hilm_p\otimes_A\Hilm_q\cong\Hilm_{pq}$ sends an elementary tensor $a\otimes b$ to~$a\alpha_p(b)$. Using that~$\alpha_p$ is an endomorphism of~$A$, we deduce that~$\mu_{p,q}$ preserves the bimodule structure. It is also surjective because~$\alpha_p$ is unital for all~$p\in P$.
 
 \begin{lem} $\Hilm=(\Hilm_p)_{p\in P}$ is a product system.
 \begin{proof} We will prove that~$\mu_{p,q}$ preserves the inner product and that the multiplication in~$\Hilm$ is associative.
 
 Let~$a_0,a_1,b_0,b_1\in A$. Then \begin{align*}\braket{a_0\otimes b_0}{a_1\otimes b_1}&=V_{q^{-1}}(b_0^*V_{p^{-1}}(a_0^*a_1)b_1)\\&=V_{q^{-1}}(V_{p^{-1}}(\alpha_p(b_0)^*)V_{p^{-1}}(a_0^*a_1)V_{p^{-1}}(\alpha_p(b_1)))\\&=V_{q^{-1}}(V_{p^{-1}}(\alpha_p(b_0)^*a_0^*a_1\alpha_p(b_1)))\\&=V_{(pq)^{-1}}(\alpha_p(b_0)^*a_0^*a_1\alpha_p(b_1))&\\&=\braket{\mu_{p,q}(a_0\otimes b_0)}{\mu_{p,q}(a_1\otimes b_1)}.
 \end{align*} This completes the proof that~$\mu_{p,q}$ is an isomorphism of correspondences for all $p,q\in P$. Now let~$s\in P$, $a\in\Hilm_p$, $b\in\Hilm_q$ and~$c\in\Hilm_s$. Then \begin{align*}(\mu_{pq,s}(\mu_{p,q}\otimes1))\big(a\otimes b\otimes c\big)&=a\alpha_p(b)\alpha_{pq}(c)=a\alpha_p(b\alpha_q(c))\\&=(\mu_{p,qs}(1\otimes\mu_{q,s}))\big(a\otimes b\otimes c\big).
 \end{align*}
 \end{proof} 
 \end{lem}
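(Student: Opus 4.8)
The plan is to reduce the claim to the two verifications indicated just below the statement. From the construction preceding the lemma we already know that each $\Hilm_p = A$ is an $A$\nb-correspondence: nondegeneracy of the interaction group makes $\braket{a}{b} = V_{p^{-1}}(a^*b)$ an honest $A$\nb-valued inner product, using that $V_{p^{-1}}$ is multiplicative on the range of $\alpha_p$, and $\varphi_p$ (left multiplication) is a $*$\nb-homomorphism into $\Bound(\Hilm_p)$; moreover each $\mu_{p,q}$ is a surjective $A$\nb-bimodule map that descends to the balancing over $A$, since $\alpha_p$ is a unital endomorphism. So it remains to prove (1) that each $\mu_{p,q}$ preserves the $A$\nb-valued inner product, hence is an isomorphism of correspondences, and (2) the coherence identity $\mu_{pq,s}\circ(\mu_{p,q}\otimes 1) = \mu_{p,qs}\circ(1\otimes\mu_{q,s})$ on $\Hilm_p\otimes_A\Hilm_q\otimes_A\Hilm_s$.

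For (1) I would compute both sides on elementary tensors. On the interior tensor product, $\braket{a_0\otimes b_0}{a_1\otimes b_1} = \braket{b_0}{\varphi_q(\braket{a_0}{a_1})b_1} = V_{q^{-1}}\bigl(b_0^* V_{p^{-1}}(a_0^*a_1)\,b_1\bigr)$, while the inner product of the images in $\Hilm_{pq}$ equals $V_{(pq)^{-1}}\bigl(\alpha_p(b_0)^* a_0^* a_1 \alpha_p(b_1)\bigr)$. Matching the two uses three ingredients: the standing hypothesis $V_{p^{-1}}\circ\alpha_p = \id_A$, which gives $b_i = V_{p^{-1}}(\alpha_p(b_i))$; the multiplicativity property (iii) of the partial representation, which for $g=p^{-1}$ reads $V_{p^{-1}}(xy) = V_{p^{-1}}(x)V_{p^{-1}}(y)$ whenever $x$ or $y$ lies in the range of $V_p = \alpha_p$ — and $\alpha_p(b_0)^* = \alpha_p(b_0^*)$ and $\alpha_p(b_1)$ do lie there; and the composition rule $V_{q^{-1}}V_{p^{-1}} = V_{q^{-1}p^{-1}} = V_{(pq)^{-1}}$ recorded above. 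Pulling the two $\alpha_p$\nb-factors inside via (iii) rewrites $b_0^* V_{p^{-1}}(a_0^*a_1)\,b_1$ as $V_{p^{-1}}\bigl(\alpha_p(b_0)^* a_0^* a_1 \alpha_p(b_1)\bigr)$, and one further application of $V_{q^{-1}}$ together with the composition rule yields exactly $V_{(pq)^{-1}}\bigl(\alpha_p(b_0)^* a_0^* a_1 \alpha_p(b_1)\bigr)$. Being surjective, $A$\nb-bimodular and inner-product-preserving, $\mu_{p,q}$ is then a correspondence isomorphism. For (2) I would evaluate both legs on a generic $a\otimes b\otimes c$: the left gives $\mu_{pq,s}(a\alpha_p(b)\otimes c) = a\alpha_p(b)\alpha_{pq}(c)$, the right gives $\mu_{p,qs}(a\otimes b\alpha_q(c)) = a\alpha_p\bigl(b\alpha_q(c)\bigr) = a\alpha_p(b)\alpha_p(\alpha_q(c))$, and these agree because $\alpha$ is a semigroup action by endomorphisms, so $\alpha_p\circ\alpha_q = \alpha_{pq}$ and $\alpha_p$ is multiplicative; since elementary tensors are total this proves the identity.

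These computations are routine once the right identities are in hand. The only point requiring care is the bookkeeping around property (iii) — checking at each splitting that the factor pulled out genuinely lies in the range of the relevant $V_g$ — together with invoking the composition rule $V_{q^{-1}}V_{p^{-1}} = V_{(pq)^{-1}}$ rather than one of the other partial-representation relations; that is where a careless step would break the argument.
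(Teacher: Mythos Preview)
Your proposal is correct and follows essentially the same approach as the paper's proof: both verify inner-product preservation via the chain $b_i = V_{p^{-1}}(\alpha_p(b_i))$, property~(iii), and the composition rule $V_{q^{-1}}V_{p^{-1}} = V_{(pq)^{-1}}$, and both check associativity by the identical elementary-tensor computation using $\alpha_p\circ\alpha_q = \alpha_{pq}$. Your added remarks on why $\mu_{p,q}$ is a well-defined surjective bimodule map and on the bookkeeping around property~(iii) are accurate and simply make explicit what the paper leaves implicit.
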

 
 \begin{lem}\label{lem:nondegenerate} There is a covariant representation of~$(A,G,V)$ in~$A\times_{\Hilm}P$. It sends $g=p^{-1}q$ to $v_g\coloneqq j_p(1_p)^*j_q(1_q)$ and~$a$ to~$j_e(a)$. Moreover, given a word~$\beta=(g_1,g_2,\ldots,g_n)$ in~$G$, the map~$a\mapsto j_e(a)v_{\beta}$ is injective, where~$v_{\beta}=v_{g_1}v_{g_2}\cdots v_{g_n}$.
 \begin{proof} We begin by proving that $j_p(1_p)^*j_q(1_q)=j_{p'}(1_{p'})^*j_{q'}(1_{q'})$ for all~$p,q,p',q'\in P$ such that~$p^{-1}q=p'^{-1}q'$. To do so, we use that~$P$ is also left reversible. We can find~$s\in P$ with $s\in(pP\cap qP)\cap(p'P\cap q'P).$ Since~$(A,G,V)$ is nondegenerate, $\Hilm$ is faithful and hence~$I_{r^{-1}(r\vee s)}=\{0\}$ for all~$r\in P$ such that~$r\not\in sP$. So $$\Hilm_{\{s\}}=\bigoplus_{\substack{r\in sP}}\Hilm_r.$$  Now given $r\in sP$, we write~$b_r$ for an element in $\Hilm_r$. We compute 
 \begin{align*}t_{\{s\}}\left(\widetilde{t}(1_p)^*\widetilde{t}(1_q)\right)(b_r)&=t_{\{s\}}\left(\widetilde{t}(1_p)^*)(\alpha_q(b_r)\otimes 1\right)\\&=V_{p^{-1}}(\alpha_q(b_r))=V_{p^{-1}q}(b_r)=V_{{p'}^{-1}q'}(b_r)\\&=t_{\{s\}}\left(\widetilde{t}(1_{p'})^*\widetilde{t}(1_{q'})\right)(b_r). 
 \end{align*}Therefore, $ j_p(1_p)^*j_q(1_q)=j_{p'}(1_{p'})^*j_{q'}(1_{q'})$ and the map $g=p^{-1}q\mapsto j_p(1_p)^*j_q(1_q)$ is well defined. This gives a partial representation of~$G$ in~$A\times_{\Hilm}P$ because~$V$ is a partial representation. Given~$g=p^{-1}q\in G$, $v_{g^{-1}}=j_q(1_q)^*j_p(1_p)=v_g^*$. So~$g\mapsto v_g$ indeed defines a \Star partial representation of~$G$.
 
 Let us prove that~$(j_e,v)$ is covariant. Take $g=p^{-1}q\in G$ and $a\in A$.  Again we use the assumption that~$P$ is left reversible and choose~$s\in pP\cap qP$. Thus it suffices to show that $$ t_{\{s\}}\left(\widetilde{t}(1_p)^*\widetilde{t}(1_q)\widetilde{t}(a) \widetilde{t}(1_q)^*\widetilde{t}(1_p)\right)=t_{\{s\}}\left(\widetilde{t}(V_g(a))\widetilde{t}(1_p)^*\widetilde{t}(1_q)\widetilde{t}(1_q)^*\widetilde{t}(1_p)\right)$$ on~$\Hilm_r$ for~$r\in sP$. Indeed, given~$b_r\in\Hilm_r$, one has \begin{align*}
  t_{\{s\}}\left(\widetilde{t}(1_p)^*\widetilde{t}(1_q)\widetilde{t}(a) \widetilde{t}(1_q)^*\widetilde{t}(1_p)\right)(b_r)&=V_{g}(aV_{g^{-1}}(b_r))=V_{g}(a)V_{g}(V_{g^{-1}}(b_r))\\&=t_{\{s\}}\left(\widetilde{t}(V_g(a))\widetilde{t}(1_p)^*\widetilde{t}(1_q)\widetilde{t}(1_q)^*\widetilde{t}(1_p)\right)(b_r),
 \end{align*} so that~$(j_e,v)$ is a covariant representation of~$(A,G,V)$. 
 
 Let $\beta=(g_1,\ldots,g_n)$ be a word in~$G$. In order to prove that the map $a\mapsto j_e(a)v_{\beta}$ is injective, take $s\in K_{\mu(\beta)^{-1}}$. That is, $$s\in P\cap g_n^{-1}P\cap(g_n^{-1} g_{n-1}^{-1})P\cdots \cap (g_n^{-1}g_{n-1}^{-1}\cdots g_1^{-1})P.$$ It exists because~$G=PP^{-1}$. Using that~$V_{g}$ is unital for all~$g\in G$, we deduce that $$j_e(a)v_{\beta}v_s=j_e(a)v_{\beta}j_s(1)=j_e(a)j_{\overset{.}{\beta}s}(V_{\beta}(1))=j_e(a)j_{\overset{.}{\beta}s}(1)=j_{\overset{.}{\beta}s}(a).$$ Since the representation of~$\Hilm$ in~$A\times_{\Hilm}P$ is injective, the right-hand side above is nonzero. This guarantees that~$a\mapsto j_e(a)v_{\beta}$ is an injective map. 
  \end{proof}  
 \end{lem}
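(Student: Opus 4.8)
The plan is to verify the three assertions in turn: that $g=p^{-1}q\mapsto v_g$ is well defined, that $(j_e,v)$ is a covariant representation of $(A,G,V)$, and that each map $a\mapsto j_e(a)v_\beta$ is injective. Throughout I would exploit that $\Hilm$ is faithful (each $\varphi_p$ is multiplication on the unital algebra $A$, hence injective), that $j_\Hilm=\{j_p\}_{p\in P}$ is injective (since $j_p(\xi)^*j_p(\xi)=j_e(\braket{\xi}{\xi})$ and $j_e$ is faithful), and that $P$ is reversible, so that any finite family of principal right ideals has nonempty intersection. Faithfulness together with right reversibility yields the decisive simplification $\Hilm_{\{s\}}=\bigoplus_{r\in sP}\Hilm_r$, because $I_{r^{-1}(r\vee s)}$ equals $A$ when $r\in sP$ and $\{0\}$ otherwise; this is the concrete model in which all computations take place.

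For well-definedness, assume $p^{-1}q=p'^{-1}q'$. Using right reversibility I would choose $s\in pP\cap qP\cap p'P\cap q'P$ and compute the action of $t_{\{s\}}\big(\tilde t(1_p)^*\tilde t(1_q)\big)$ on a vector $b_r\in\Hilm_r$ with $r\in sP$. Unwinding $\mu_{q,r}(1_q\otimes b_r)=\alpha_q(b_r)$ and the adjoint formula for $t^p_{\{s\}}(1_p)^*$ collapses this to $V_{p^{-1}}(\alpha_q(b_r))$, which by the derived relation $V_{p^{-1}}V_q=V_{p^{-1}q}$ equals $V_{p^{-1}q}(b_r)=V_{p'^{-1}q'}(b_r)$. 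The same computation for $p',q'$ produces the identical operator on every fibre of $\Hilm_{\{s\}}$, and injectivity of $j_\Hilm$ then forces $j_p(1_p)^*j_q(1_q)=j_{p'}(1_{p'})^*j_{q'}(1_{q'})$. The relation $v_{g^{-1}}=v_g^*$ is immediate from the definition.

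The heart of the argument is that $v$ is a $\Star$ partial representation and that the covariance identity $v_gj_e(a)v_{g^{-1}}=j_e(V_g(a))v_gv_{g^{-1}}$ holds. For both I would again pass to the faithful picture on $\Hilm_{\{s\}}$: fixing $g=p^{-1}q$ and $s\in pP\cap qP$, the operator $t_{\{s\}}(v_g)$ acts on each surviving fibre $\Hilm_r$, $r\in sP$, precisely as $V_g$ under the identification $\Hilm_r=A$. The covariance identity then reduces fibrewise to $V_g(aV_{g^{-1}}(b_r))=V_g(a)V_g(V_{g^{-1}}(b_r))$, which is exactly axiom (iii) of the interaction group since $V_{g^{-1}}(b_r)$ lies in the range of $V_{g^{-1}}$; injectivity of $j_\Hilm$ lifts this to $A\times_{\Hilm}P$. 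For the partial-representation relations $v_gv_hv_{h^{-1}}=v_{gh}v_{h^{-1}}$ I would take a common right multiple $s$ large enough to realise $v_g,v_h,v_{gh}$ simultaneously on $\Hilm_{\{s\}}$, whereupon the relation becomes the corresponding identity among $V_g,V_h,V_{gh}$, valid because $V$ is a partial representation of $G$. I expect \emph{this transfer step to be the main obstacle}, since the range projections $v_mv_m^*$ need not commute, so the relation is not formally automatic from the isometric representation $p\mapsto v_p$; one must argue that an identity holding under every realisation $t_{\{s\}}$ holds in the quotient $A\times_{\Hilm}P$, which I would justify by reducing the difference of the two sides to the strong-covariance ideal through the seminorms $\|\cdot\|_F$ and the description $J_\infty\cap\Toep^g_{\Hilm}=\Toep^g_{\Hilm}J_e$ of Lemma~\ref{lem:ideal_description}.

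Finally, for injectivity of $a\mapsto j_e(a)v_\beta$ with $\beta=(g_1,\dots,g_n)$, the plan is to right-multiply by a single isometry. Writing each suffix $g_k\cdots g_n=c_kd_k^{-1}$ with $c_k,d_k\in P$ (possible as $G=PP^{-1}$) and choosing $s\in\bigcap_k d_kP$ (nonempty by right reversibility) places $s$ in $K_{\mu(\beta)^{-1}}$, so that every partial product $g_k\cdots g_ns$ lies in $P$; in particular $\dot\beta s\in P$. Telescoping $v_\beta v_s=v_{g_1}\cdots v_{g_n}j_s(1)$ from the right then collapses step by step to $j_{\dot\beta s}(V_\beta(1))=j_{\dot\beta s}(1)$, using $V_g(1)=1$ for all $g$. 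Hence $j_e(a)v_\beta v_s=j_{\dot\beta s}(a)$, which is nonzero for $a\neq 0$ since $j_\Hilm$, and in particular $j_{\dot\beta s}$, is injective; a fortiori $j_e(a)v_\beta\neq 0$, completing the proof.
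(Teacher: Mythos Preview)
Your proposal follows essentially the same route as the paper: reduce every identity to the faithful Fock picture on $\Hilm_{\{s\}}=\bigoplus_{r\in sP}\Hilm_r$ for a suitable common right multiple~$s$, compute there that the relevant operators act fibrewise as the~$V_g$'s, and then transport the identities to~$A\times_{\Hilm}P$; the injectivity of $a\mapsto j_e(a)v_\beta$ is handled identically, by right-multiplying by~$v_s$ for $s\in K_{\mu(\beta)^{-1}}$. You are in fact more explicit than the paper about the transfer step for the partial-representation axioms, which the paper dispatches in one sentence.

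One slip to fix: in your well-definedness and covariance paragraphs you write that ``injectivity of~$j_{\Hilm}$'' forces the desired equalities in~$A\times_{\Hilm}P$. That is not the right justification---injectivity of~$j_{\Hilm}$ only tells you $j_p$ is faithful on~$\Hilm_p$, not that two distinct Toeplitz elements with the same $t_{\{s\}}$-image coincide in the quotient. The correct mechanism is exactly the one you spell out in your partial-representation paragraph: if the difference~$b$ lies in~$\Toep^g_{\Hilm}$ and $t_{\{s\}}(b)$ vanishes on~$\Hilm_{\{s\}}$, then $\|b^*b\|_F=0$ for every finite $F\supseteq\{s\}$, so $b^*b\in J_e$ and hence $b\in\Toep^g_{\Hilm}J_e=J_\infty\cap\Toep^g_{\Hilm}$ by Lemma~\ref{lem:ideal_description}. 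Apply that uniformly in all three places and the argument is complete.
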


  The following is the main result of this subsection.
  
  \begin{prop} Let $P$ be a subsemigroup of a group~$G$ with~$G=P^{-1}P=PP^{-1}$. Let~$(A,G,V)$ be a nondegenerate interaction group extending an action $\alpha\colon P\to\mathrm{End}(A)$ by unital and injective endomorphisms. Suppose, in addition, that $V_{p^{-1}}\circ\alpha_p=\id_A$ for all~$p\in P$. Then~$A\rtimes_G V$ is isomorphic to~$A\times_{\Hilm}P$, where~$\Hilm$ is the product system constructed out of~$V$.
  
  \begin{proof} We begin by proving that $(j_e,v)$ factors through $A\rtimes_GV$. The pair~$(j_e,v)$ induces a \Star homomorphism $\widehat{\phi}\colon\Toep(A,G,V)\rightarrow A\times_{\Hilm}P$. Lemma \ref{lem:nondegenerate} says that the map~$a\mapsto j_e(a)v_{\beta}$ is injective for each word~$\beta$ in~$G$. Hence~\cite[Proposition 10.5]{Exel:New_look} implies that~$\widehat{\phi}$ is injective on~$\widehat{\Hilm[M]}_{\alpha}$. In particular, if~$c\in \widehat{\Hilm[Z]}^{\mu(\alpha)}$ is an $\alpha$\nb-redundancy, $\widehat{\phi}(c)j_r(\Hilm_r)=\{0\}$ for all~$r\in K_{\mu(\alpha)}$ because $$j_r(a)=j_e(a)j_r(1)=j_e(a)v_{\alpha}j_{\overset{.}{\alpha}^{-1}r}(1)\in\widehat{\phi}\big(\widehat{M}_{\alpha}\big)j_{\overset{.}{\alpha}^{-1}r}(1)$$ for all~$a$ in $A$. So~$\widehat{\phi}(c)$ must be zero in~$A\times_{\Hilm}P$. This induces a \Star homomorphism $\phi\colon A\rtimes_GV\rightarrow A\times_{\Hilm}P$ that is faithful on~$A$ and preserves the $G$\nb-grading of~$A\rtimes_GV$. Proposition 4.6 of~\cite{Exel:New_look} says that~$\phi$ is also faithful on the fixed-point algebra of $A\rtimes_GV$.  Now by Lemma~\ref{lem:isometry}, $s_p$ is an isometry in~$A\rtimes_GV$ for all $p\in P$.  Moreover, \cite[Lemma 2.3]{Exel:New_look} says that~$s_ps_q=s_{pq}$ for all $p,q\in P$. Hence one can show that the maps~$\Hilm_p\ni 1_p\mapsto s_p$ and $a\mapsto i(a)$ give rise to a representation of~$\Hilm$. By applying the injectivity of~$\phi$ on the fibres and the usual argument that the induced \Star homomorphism $\Toep_{\Hilm}\rightarrow A\rtimes_GV$ preserves the $G$\nb-grading, we conclude that such a representation must factor through~$A\times_{\Hilm}P$. The resulting \Star homomorphism is the inverse of~$\phi$.  
  \end{proof}  
  \end{prop}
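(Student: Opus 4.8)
The plan is to produce mutually inverse \Star homomorphisms $\phi\colon A\rtimes_GV\to A\times_{\Hilm}P$ and $\widehat{\rho}\colon A\times_{\Hilm}P\to A\rtimes_GV$.

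First I would feed the covariant representation $(j_e,v)$ of $(A,G,V)$ in $A\times_{\Hilm}P$ produced in Lemma~\ref{lem:nondegenerate} into the universal property of $\Toep(A,G,V)$, obtaining a \Star homomorphism $\widehat{\phi}\colon\Toep(A,G,V)\to A\times_{\Hilm}P$, and then check that $\widehat{\phi}$ annihilates every redundancy. Here the injectivity of $a\mapsto j_e(a)v_{\beta}$ for each word~$\beta$, established in Lemma~\ref{lem:nondegenerate}, together with \cite[Proposition 10.5]{Exel:New_look}, gives that $\widehat{\phi}$ is injective on each subspace $\widehat{\Hilm[M]}_{\alpha}$. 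Hence if $c\in\widehat{\Hilm[Z]}^{\mu(\alpha)}$ satisfies $c\widehat{\Hilm[M]}_{\alpha}=\{0\}$, then, writing $j_r(a)=j_e(a)v_{\alpha}j_{\overset{.}{\alpha}^{-1}r}(1)$ for $r\in K_{\mu(\alpha)}$, one obtains $\widehat{\phi}(c)j_r(\Hilm_r)=\{0\}$ for all such~$r$; since $\widehat{\phi}(c)$ lies in the unit fibre of $A\times_{\Hilm}P$, strong covariance then forces $\widehat{\phi}(c)=0$. This descends to $\phi\colon A\rtimes_GV\to A\times_{\Hilm}P$, which is faithful on~$A$ and intertwines the canonical $G$\nb-gradings; by \cite[Proposition 4.6]{Exel:New_look} it is therefore faithful on the fixed-point algebra of $A\rtimes_GV$.

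For the reverse map, Lemma~\ref{lem:isometry} shows that each $s_p$ is an isometry in $A\rtimes_GV$ and \cite[Lemma 2.3]{Exel:New_look} gives $s_ps_q=s_{pq}$; using the inner product $\braket{\xi}{\eta}=V_{p^{-1}}(\xi^*\eta)$ on $\Hilm_p=A$ one verifies that $\Hilm_p\ni\xi\mapsto i(\xi)s_p$, together with $a\mapsto i(a)$, is a Toeplitz representation of~$\Hilm$, hence induces a grading-preserving $\rho\colon\Toep_{\Hilm}\to A\rtimes_GV$. Since $\phi\circ\rho$ agrees with the quotient map $q\colon\Toep_{\Hilm}\to A\times_{\Hilm}P$ on $\tilde{t}(\Hilm)$, it equals~$q$; in particular $\phi$ is injective on $\rho(J_e)$, which lies in the fixed-point algebra of $A\rtimes_GV$, so $\rho(J_e)=0$, and hence $\rho(J_{\infty})=0$ because $J_{\infty}$ is the ideal generated by~$J_e$. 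Thus $\rho$ descends to $\widehat{\rho}\colon A\times_{\Hilm}P\to A\rtimes_GV$, and $\phi\circ\widehat{\rho}\circ q=q$ together with the fact that $\widehat{\rho}\circ\phi$ fixes the canonical generators $i(A)$ and $\{s_g\}_{g\in G}$ of $A\rtimes_GV$ shows that $\phi$ and $\widehat{\rho}$ are mutually inverse.

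The step I expect to be the main obstacle is the verification that $\widehat{\phi}$ kills all redundancies: this is the one place where Lemma~\ref{lem:nondegenerate}, Exel's description of the subspaces $\widehat{\Hilm[M]}_{\alpha}$ and $\widehat{\Hilm[Z]}^{\mu(\alpha)}$, and the definition of strong covariance (Definition~\ref{def:strong_covariance}) must all be brought to bear at once, in particular to pass from the vanishing of $\widehat{\phi}(c)$ on the submodules $j_r(\Hilm_r)$ with $r\in K_{\mu(\alpha)}$ to the vanishing of $\widehat{\phi}(c)$ itself in the unit fibre. The remaining points---that the two prescriptions are genuine representations and that all the maps respect the $G$\nb-grading---should be routine.
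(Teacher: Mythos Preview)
Your proposal is correct and follows essentially the same route as the paper's own proof: both build $\widehat{\phi}$ from Lemma~\ref{lem:nondegenerate}, invoke \cite[Proposition~10.5]{Exel:New_look} for injectivity on $\widehat{\Hilm[M]}_{\alpha}$, use the identity $j_r(a)=j_e(a)v_{\alpha}j_{\dot{\alpha}^{-1}r}(1)$ to kill redundancies, appeal to \cite[Proposition~4.6]{Exel:New_look} for faithfulness on the fixed-point algebra, and then run the reverse direction via Lemma~\ref{lem:isometry} and \cite[Lemma~2.3]{Exel:New_look}. Your treatment of the reverse map is in fact slightly more explicit than the paper's (you spell out $\phi\circ\rho=q$ and deduce $\rho(J_e)=0$ from injectivity of~$\phi$ on the unit fibre, whereas the paper just calls this ``the usual argument''), and you correctly identify the redundancy step as the place where the real work is concentrated---the paper is equally terse there.
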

 
\begin{rem} Let~$P$ be a reversible cancellative semigroup and let~$G$ be its enveloping group. Let~$A$ be a unital $\Cst$\nb-algebra and let~$\alpha\colon P\rightarrow\mathrm{End}(A)$ be an action by injective endomorphisms. Given a not-necessarily nondegenerate interaction group~$(A,G,V)$  extending~$\alpha$ with~$V_{p^{-1}}\circ\alpha_p=\id_A$, the equality $V_{q^{-1}}V_{p^{-1}}=V_{q^{-1}p^{-1}}$ still holds by~\cite[Lemma 2.3]{Exel:New_look}. Hence one may build a product system as above by letting~$\Hilm_p\coloneqq A\alpha_p(1)$ and~$\mu_{p,q}(a\alpha_p(1)\otimes_Ab\alpha_q(1))\coloneqq a\alpha_p(b)\alpha_{pq}(1)$ (see \cite{Larsen:Crossed_abelian}). Thus the covariance algebra of such a product system may be viewed as the crossed product of~$A$ under~$V$, generalising Exel's construction to interaction groups satisfying~$V_{q^{-1}}V_{p^{-1}}=V_{q^{-1}p^{-1}}$ that are not necessarily nondegenerate. For instance, the product system built in the previous subsection fits into this setting, where~$V_{g}(\chi_{S})\coloneqq\chi_{gS\cap P}$ for all~$S\in\mathcal{J}$ and $g\in G.$ 
\end{rem}

\section*{Acknowledgements} This article is part of my PhD dissertation, written at the University of Göttingen, under the supervision of Ralf Meyer. I am grateful to Ralf Meyer for his guidance and for helpful discussions and comments on a preliminary version of this paper. This work was supported by CNPq (Brazil) through grant 248938/2013-4.

 \appendix
 
 \section{Topologically graded \texorpdfstring{$\mathrm{C^*}$}{C*}-algebras}\label{sec:topologically_graded}
 
 In this appendix we recall basic aspects on the topological grading obtained from a discrete coaction. We refer to~\cite{Exel:Partial_dynamical} for a careful treatment of graded $\Cst$\nb-algebras as well as their relationship to $\Cst$\nb-algebras associated to Fell bundles.

\begin{defn} Let~$B$ be a $\Cst$\nb-algebra and~$A$ a $\Cst$\nb-subalgebra of~$B$. A positive linear map~$E\colon B\to A$ is a \emph{conditional expectation} if~$E$ is contractive and idempotent, $E(a)=a$ for all~$a\in A$ and $E$ is an $A$\nb-bimodule map, that is, $E(a_1ba_2)=a_1E(b)a_2$ for all $b\in B$, $a_1,a_2\in A$. It is called \emph{faithful} if~$E(b^*b)=0$ implies~$b=0$.
\end{defn}

\begin{defn}
Let $B$ be a $\Cst$\nb-algebra and~$G$ a discrete group. Let~$\{B_g\}_{g\in G}$ be a collection of closed subspaces of~$B$. We say that $\{B_g\}_{g\in G}$ is a grading for~$B$ if, for all~$g,h\in G$, one has
\begin{enumerate}
 
\item[\rm (i)] $B_g^*=B_{g^{-1}}$,

\item[\rm (ii)]  $B_gB_h\subseteq B_{gh}$,

\item[\rm (iii)]  $\{B_g\}_{g\in G}$ is linearly independent and~$\bigoplus_{g\in G}B_g$ is a dense subspace of~$B$.
\end{enumerate}

We say that~$B$ is a \emph{graded $\Cst$\nb-algebra}.
\end{defn}

\begin{defn} A grading $\{B_g\}_{g\in G}$ for a $\Cst$\nb-algebra~$B$ is a \emph{topological grading} if there exists a conditional expectation $E\colon B\to B_e$ vanishing on~$B_g$ for all~$g\neq e$.
\end{defn}

The proof of the next proposition is taken from~\cite[Proposition 2.6]{Chi-Keung:discrete_coactions}.

\begin{prop}\label{prop:grading_from_coaction} Let~$(A, G,\delta)$ be a coaction.  Then~$A$ carries a topological $G$\nb-grading. The corresponding spectral subspace at~$g\in G$ is given by $$A_g=\{a\in A\mid \delta(a)=a\otimes u_g\}.$$
\end{prop}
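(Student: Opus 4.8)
The plan is to realise the spectral subspaces $A_g=\{a\in A\mid\delta(a)=a\otimes u_g\}$ as the ranges of a family of contractive idempotents on $A$, obtained by slicing the coaction $\delta$ with suitable functionals on $\Cst(G)$; this parallels the use of the spectral projections $\widetilde\delta_g$ in the proof of Lemma~\ref{lem:grading_description}.

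First I would introduce, for each $g\in G$, the bounded linear functional $\tau_g$ on $\Cst(G)$ with $\tau_g(u_h)=1$ if $h=g$ and $\tau_g(u_h)=0$ otherwise; concretely $\tau_g=\tau(u_{g^{-1}}\,\cdot\,)$, where $\tau$ is the state on $\Cst(G)$ obtained by composing the regular representation $\Cst(G)\to\Cst_r(G)$ with its canonical tracial state, so that $\|\tau_g\|\le 1$. Put $\delta_g\coloneqq(\id_A\otimes\tau_g)\circ\delta\colon A\to A$, a contraction. The first substantial step is the intertwining identity $\delta\circ\delta_g=\bigl(\id_A\otimes\bigl[(\id_{\Cst(G)}\otimes\tau_g)\circ\delta_G\bigr]\bigr)\circ\delta$, which follows from the coaction identity $(\delta\otimes\id)\delta=(\id_A\otimes\delta_G)\delta$ together with the basic properties of slice maps. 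Since $(\id_{\Cst(G)}\otimes\tau_g)\circ\delta_G$ sends $u_h$ to $\tau_g(u_h)u_h$, it is exactly the map $x\mapsto\tau_g(x)u_g$, whence $\delta(\delta_g(a))=\delta_g(a)\otimes u_g$ and so $\delta_g(A)\subseteq A_g$. Evaluating on $A_h$ shows that $\delta_g$ is the identity on $A_g$ and $0$ on $A_h$ for $h\ne g$, so each $\delta_g$ is an idempotent with range exactly $A_g$. Properties (i) and (ii) of a grading are then immediate from $\delta$ being a \Star homomorphism and from $u_gu_h=u_{gh}$, and linear independence of $\{A_g\}_{g\in G}$ follows by applying the $\delta_{g_j}$ to a vanishing finite sum.

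For the conditional expectation I would take $E\coloneqq\delta_e=(\id_A\otimes\tau)\circ\delta$: it is positive and contractive, being a composite of the \Star homomorphism $\delta$ with the positive contractive slice map $\id_A\otimes\tau$; it restricts to the identity on $A_e$, vanishes on $A_g$ for $g\ne e$, is idempotent with range $A_e$, and is an $A_e$\nb-bimodule map by the module property of slice maps. Thus $E$ will witness that the grading, once established, is topological.

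The remaining point, and the one I expect to be the crux, is density of $\bigoplus_{g\in G}A_g$ in $A$; this is exactly where nondegeneracy of $\delta$ is needed, in the form that $\delta(A)(1\otimes\Cst(G))$ is dense in $A\otimes\Cst(G)$. Given $a\in A$ and $\varepsilon>0$, I would pick $a_1,\dots,a_n\in A$ and $h_1,\dots,h_n\in G$ with $\bigl\|a\otimes u_e-\sum_{j=1}^n\delta(a_j)(1\otimes u_{h_j})\bigr\|<\varepsilon$ and then apply $\id_A\otimes\tau$. The key observation is that $T\mapsto(\id_A\otimes\tau)\bigl(T(1\otimes u_h)\bigr)$ coincides with the slice map $\id_A\otimes\tau_{h^{-1}}$, because the functional $x\mapsto\tau(xu_h)$ equals $\tau_{h^{-1}}$; hence the finite sum is carried to $\sum_{j=1}^n\delta_{h_j^{-1}}(a_j)$, which lies in $\bigoplus_gA_g$, while $a\otimes u_e$ is carried to $a$. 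Therefore $\bigl\|a-\sum_j\delta_{h_j^{-1}}(a_j)\bigr\|<\varepsilon$, giving density. Putting everything together, $\{A_g\}_{g\in G}$ is a topological $G$\nb-grading of $A$ with the stated spectral subspaces. The genuinely delicate parts are the slice-map bookkeeping behind the intertwining identity for $\delta\circ\delta_g$ and this final density argument via nondegeneracy; everything else is routine.
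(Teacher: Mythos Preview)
Your proposal is correct and follows essentially the same approach as the paper's proof: both construct the spectral projections by slicing $\delta$ with the functionals on $\Cst(G)$ that pick out the $u_g$\nb-coefficient, use the coaction identity to see that these projections land in $A_g$, and invoke nondegeneracy via $\overline{\delta(A)(1\otimes\Cst(G))}=A\otimes\Cst(G)$ for density. The only cosmetic difference is that the paper works with the projections $E_g=\id_A\otimes\chi_g$ onto the $\CC u_g$\nb-component followed by the counit $1_G$, whereas you go directly through the scalar slice maps $\id_A\otimes\tau_g$; since $(\id_A\otimes 1_G)\circ E_g=\id_A\otimes\tau_g$, these are the same maps.
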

\begin{proof} Clearly, $A_gA_h\subseteq A_{gh}$ and $A_g^*=A_{g^{-1}}$ because~$\delta$ is a \Star homomorphism. If $a=\sum_{\substack{i=1}}^n a_{g_i}=0$, then $$\delta(a)=\sum_{\substack{i=1}}^n a_{g_i}\otimes u_{g_1}=0$$ implies~$E_{g_i}(\delta(a))=a_{g_i}\otimes u_{g_i}=0$, where~$E_{g_i}=\id_A\otimes\chi_{g_i}$ and~$\chi_{g_i}$ denotes the contractive projection of~$\Cst(G)$ onto~$\CC u_{g_i}$. So~$a_{g_i}=0$ for all~$i\in\{1,\ldots,n\}$.

Given $a\in A$, it follows from the coaction identity that $$(\delta\otimes\id_G)E_g(\delta(a))=E_g(\delta(a))\otimes u_g.$$ This shows that $E_g(\delta(a))=a_g\otimes u_g$ for some~$a_g$ in~$A_g$. We claim that~$\bigoplus_{\substack{g\in G}}A_g$ is dense in~$A$. Indeed, let~$a\in A$. Since~$G$ is discrete, $\delta$ automatically satisfies $\delta(A)(1\otimes \Cst(G))=A\otimes \Cst(G)$ (see~\cite{Baaj-Skandalis:Hopf_KK}). So we may approximate~$a\otimes 1\approx\sum_{i=1}^n\delta(a_i)(1\otimes u_{g_i})$. In addition, $\id_A=(\id_A\otimes 1_G)\circ\delta$  by \cite[Lemma A.24]{Echterhoff-Kaliszewski-Quigg-Raeburn:Categorical}, where $1_G\colon G\to\CC$ is the homomorphism $g\mapsto 1$. Then \begin{align*}a=(\id_A\otimes 1_G)(a\otimes 1)&=(\id_A\otimes 1_G)E_e\big((a\otimes 1)\big)\\&\approx(\id_A\otimes 1_G)\bigg(\overset{n}{\sum_{\substack{i=1}}}E_e\big(\delta(a_i)(1\otimes u_{g_i})\big)\bigg)\\&=(\id_A\otimes 1_G)\bigg(\overset{n}{\sum_{\substack{i=1}}}E_{g_i^{-1}}(\delta(a_i))\bigg)\in \bigoplus_{g\in G}A_g.\end{align*} Now we see that $(\id_A\otimes 1_G)\circ E_g\circ\delta$ gives a contractive projection onto $A_g$ that vanishes on~$A_h$ for~$h\neq g$. Hence $\{A_g\}_{g\in G}$ is a topological grading for~$A$.
\end{proof}

If $(A, G,\delta)$ is a coaction, we refer to the corresponding spectral subspace at~$e$ as the \emph{fixed-point algebra} for~$\delta$. 

\begin{defn}[\cite{Echterhoff-Kaliszewski-Quigg-Raeburn:Categorical}*{Definition A.45}] Let~$(A,G,\delta)$ and $(B,G,\gamma)$ be coactions. We say that a \Star homomorphism $\psi\colon A\to B$ is \emph{$\delta\text{-}\gamma$ equivariant} if
$(\psi\otimes\id_G)\circ\delta=\gamma\circ\psi$.
\end{defn}

\begin{prop}\label{prop:induced_coaction} Let~$(A,G,\delta)$ be a coaction. Let $I\idealin A$ be an ideal satisfying~$I=\overline{\bigoplus}_{\substack{g\in G}}I\cap A_g$. Then there is a coaction $\delta_{A/I}\colon A/I\to A/I\otimes \Cst(G)$ such that $$(q\otimes\id_G)\circ\delta=\delta_{A/I}\circ q.$$ In particular, $q$ is a $\delta\text{-}\delta_{A/I}$ equivariant \Star homomorphism.
\end{prop}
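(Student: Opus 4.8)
The plan is to show that $\delta$ descends to the quotient. The key observation is that the hypothesis $I=\overline{\bigoplus}_{g\in G}(I\cap A_g)$ forces $\delta(I)\subseteq I\otimes\Cst(G)$. Indeed, if $a\in I\cap A_g$, then $\delta(a)=a\otimes u_g$ by Proposition~\ref{prop:grading_from_coaction}, and since $a\in I$ this element lies in $I\otimes\Cst(G)$. As $\delta$ is continuous and $I\otimes\Cst(G)$ is a closed ideal of $A\otimes\Cst(G)$ (the minimal tensor product being injective, $I\otimes\Cst(G)$ embeds isometrically as a closed subspace), it follows that $\delta(I)\subseteq I\otimes\Cst(G)$. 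Hence $(q\otimes\id_G)\circ\delta$ annihilates $I$, so it factors through $q$: the formula $\delta_{A/I}(q(a))\coloneqq(q\otimes\id_G)(\delta(a))$ gives a well-defined \Star homomorphism $\delta_{A/I}\colon A/I\to A/I\otimes\Cst(G)$ with $(q\otimes\id_G)\circ\delta=\delta_{A/I}\circ q$ by construction; this last equation is precisely the $\delta\text{-}\delta_{A/I}$ equivariance of $q$.

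It remains to verify that $\delta_{A/I}$ is a coaction, and here each axiom is inherited from $\delta$ by transporting it along the surjective \Star homomorphisms $q$, $q\otimes\id_G$ and $q\otimes\id_G\otimes\id_G$. For the coaction identity, one checks that $(\delta_{A/I}\otimes\id_G)\circ\delta_{A/I}\circ q$ and $(\id_{A/I}\otimes\delta_G)\circ\delta_{A/I}\circ q$ both equal the image under $q\otimes\id_G\otimes\id_G$ of the two (equal) sides of the coaction identity for $\delta$; since $q$ is surjective, $\delta_{A/I}$ satisfies the coaction identity. For nondegeneracy, $G$ is discrete, so $\delta(A)(1\otimes\Cst(G))$ is dense in $A\otimes\Cst(G)$, as recalled in the proof of Proposition~\ref{prop:grading_from_coaction}; applying the continuous surjection $q\otimes\id_G$ shows that $\delta_{A/I}(A/I)(1\otimes\Cst(G))$ is dense in $A/I\otimes\Cst(G)$. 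For injectivity, the identity $(\id_A\otimes 1_G)\circ\delta=\id_A$ passes to the quotient to give $(\id_{A/I}\otimes 1_G)\circ\delta_{A/I}=\id_{A/I}$, so $\delta_{A/I}$ admits a left inverse and is injective.

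The only genuinely delicate point is the first step, $\delta(I)\subseteq I\otimes\Cst(G)$, which is the sole place where the grading hypothesis on $I$ enters; everything afterwards is a routine transport of structure through $q$. In carrying it out one should note that no exactness assumption on $\Cst(G)$ is needed: we use only that $q\otimes\id_G$ is a surjective \Star homomorphism, which is automatic for minimal tensor products, and that $I\otimes\Cst(G)$ is closed in $A\otimes\Cst(G)$, which follows from injectivity of the minimal tensor product.
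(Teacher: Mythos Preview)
Your proof is correct and follows essentially the same approach as the paper: define $\delta_{A/I}(q(a))\coloneqq(q\otimes\id_G)(\delta(a))$, check well-definedness via the grading hypothesis on~$I$, and then verify injectivity, the coaction identity, and nondegeneracy by transporting the corresponding properties of~$\delta$ along~$q$. The paper's argument is terser but structurally identical; your added remarks about closedness of $I\otimes\Cst(G)$ and the absence of any exactness requirement are correct elaborations rather than a different route.
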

\begin{proof} Given $q(a)\in A/I$, set $\delta_{A/I}(q(a))\coloneqq (q\otimes\id_G)(\delta(a))$. This vanishes on~$I$ because it vanishes on~$I\cap A_g$ for all~$g\in G$ and~$I$ is generated by its intersection with the spectral subspaces. It is also injective because~$(\id_A\otimes 1_G)\circ \delta=\id_A$ gives~$q(a)\in\ker\delta_{A/I}$ if and only if $a$ belongs to~$I$. This satisfies the coaction identity because~$\delta$ does so. That $\delta_{A/I}$ is a nondegenerate \Star homomorphism is then clear. 
\end{proof}

\begin{bibdiv}
  \begin{biblist}

\bib{Adji-Laca-Nilsen-Raeburn}{article}{
    author={Adji, Sriwulan},
author ={Laca, Marcelo},
author={Nilsen, May},
author={Raeburn, Iain},
     TITLE = {Crossed products by semigroups of endomorphisms and the
              {T}oeplitz algebras of ordered groups},
   JOURNAL = {Proc. Amer. Math. Soc.},
  JOURNAL = {Proceedings of the American Mathematical Society},
    VOLUME = {122},
      YEAR = {1994},
    NUMBER = {4},
     PAGES = {1133--1141},
      ISSN = {0002-9939},
      review={\MRref{1215024}{46L55}},
MRREVIEWER = {Ronghui Ji},
      doi= {10.2307/2161182},
}

\bib{Albandik-Meyer:Product}{article}{
  author={Albandik, Suliman},
  author={Meyer, Ralf},
  title={Product systems over Ore monoids},
  journal={Doc. Math.},
  volume={20},
  date={2015},
  pages={1331--1402},
  issn={1431-0635},
  review={\MRref{3452185}{}},
  eprint={http://www.math.uni-bielefeld.de/documenta/vol-20/38.html},
}

\bib{Albandik-Meyer:Colimits}{article}{
  author={Albandik, Suliman},
  author={Meyer, Ralf},
  title={Colimits in the correspondence bicategory},
  journal={M\"unster J. Math.},
  volume={9},
  date={2016},
  pages={51--76},
  issn={1867-5778},
  doi={10.17879/45209432019},
  review={\MRref {3549542}{}},
}

\bib{Arveson:Continuous_Fock}{article}{
  author={Arveson, William},
  title={Continuous analogues of Fock space},
  journal={Mem. Amer. Math. Soc.},
  volume={80},
  date={1989},
  number={409},
  pages={iv+66},
  issn={0065-9266},
  review={\MRref{987590}{90f:47061}},
  doi={10.1090/memo/0409},
}

\bib{Baaj-Skandalis:Hopf_KK}{article}{
  author={Baaj, Saad},
  author={Skandalis, Georges},
  title={\(C^*\)\nobreakdash-alg\`ebres de Hopf et th\'eorie de Kasparov \'equivariante},
  journal={\(K\)-Theory},
  volume={2},
  date={1989},
  number={6},
  pages={683--721},
  issn={0920-3036},
  review={\MRref{1010978}{90j:46061}},
  doi={10.1007/BF00538428},
}

\bib{Carlsen-Larsen-Sims-Vittadello:Co-universal}{article}{
  author={Carlsen, Toke M.},
  author={Larsen, Nadia S.},
  author={Sims, Aidan},
  author={Vittadello, Sean T.},
  title={Co-universal algebras associated to product systems, and gauge-invariant uniqueness theorems},
  journal={Proc. Lond. Math. Soc. (3)},
  volume={103},
  date={2011},
  number={4},
  pages={563--600},
  issn={0024-6115},
  review={\MRref{2837016}{2012h:46085}},
  doi={10.1112/plms/pdq028},
}

\bib{Echterhoff-Kaliszewski-Quigg-Raeburn:Categorical}{article}{
  author={Echterhoff, Siegfried},
  author={Kaliszewski, Steven P.},
  author={Quigg, John},
  author={Raeburn, Iain},
  title={A categorical approach to imprimitivity theorems for $C^*$\nobreakdash-dynamical systems},
  journal={Mem. Amer. Math. Soc.},
  volume={180},
  date={2006},
  number={850},
  pages={viii+169},
  issn={0065-9266},
  review={\MRref{2203930}{2007m:46107}},
  doi={10.1090/memo/0850},
}

\bib{Exel:New_look}{article}{
  author={Exel, Ruy},
  title={A new look at the crossed product of a $C^*$\nobreakdash-algebra by a semigroup of endomorphisms},
  journal={Ergodic Theory Dynam. Systems},
  volume={28},
  date={2008},
  number={3},
  pages={749--789},
  issn={0143-3857},
  review={\MRref{2422015}{2010b:46144}},
  doi={10.1017/S0143385707000302},
}

\bib{Exel:Partial_dynamical}{book}{
  author={Exel, Ruy},
  title={Partial dynamical systems, Fell bundles and applications},
  series={Mathematical Surveys and Monographs},
  volume={224},
  date={2017},
  pages={321},
  isbn={978-1-4704-3785-5},
  isbn={978-1-4704-4236-1},
  publisher={Amer. Math. Soc.},
  place={Providence, RI},
}

\bib{Fowler:Product_systems}{article}{
  author={Fowler, Neal J.},
  title={Discrete product systems of Hilbert bimodules},
  journal={Pacific J. Math.},
  volume={204},
  date={2002},
  number={2},
  pages={335--375},
  issn={0030-8730},
  review={\MRref {1907896}{2003g:46070}},
  doi={10.2140/pjm.2002.204.335},
}

\bib{Katsura:Cstar_correspondences}{article}{
  author={Katsura, Takeshi},
  title={On $C^*$\nobreakdash -algebras associated with $C^*$\nobreakdash -correspondences},
  journal={J. Funct. Anal.},
  volume={217},
  date={2004},
  number={2},
  pages={366--401},
  issn={0022-1236},
  review={\MRref {2102572}{2005e:46099}},
  doi={10.1016/j.jfa.2004.03.010},
}

\bib{Katsura:class_I}{article}{
  author={Katsura, Takeshi},
  title={A class of $C^*$\nobreakdash-algebras generalizing both graph algebras and homeomorphism $C^*$\nobreakdash-algebras. I. Fundamental results},
  journal={Trans. Amer. Math. Soc.},
  volume={356},
  date={2004},
  number={11},
  pages={4287--4322},
  issn={0002-9947},
  review={\MRref{2067120}{2005b:46119}},
  doi={10.1090/S0002-9947-04-03636-0},
}

\bib{MR2171670}{article}{
  author={Kwa\'sniewski, Bartosz Kosma},
  title={Covariance algebra of a partial dynamical system},
  journal={Cent. Eur. J. Math.},
  volume={3},
  date={2005},
  number={4},
  pages={718--765},
  issn={1895-1074},
  review={\MRref{2171670}{2008i:47141}},
  doi={10.2478/BF02475628},
}

\bib{Kwasniewski-Szymanski:Ore}{article}{
  author={Kwa\'sniewski, Bartosz Kosma},
  author={Szyma\'nski, Wojciech},
  title={Topological aperiodicity for product systems over semigroups of Ore type},
  journal={J. Funct. Anal.},
  volume={270},
  date={2016},
  number={9},
  pages={3453--3504},
  issn={0022-1236},
  review={\MRref{3475461}{}},
  doi={10.1016/j.jfa.2016.02.014},
}

\bib{LACA1996415}{article}{
title = {Semigroup Crossed Products and the Toeplitz Algebras of Nonabelian Groups},
journal = {Journal of Functional Analysis},
volume = {139},
number = {2},
pages = {415--440},
year = {1996},
issn ={0022-1236},
doi = {10.1006/jfan.1996.0091},
 review={\MRref {1402771}{46L55 (46L35 47B35)},},
url = {http://www.sciencedirect.com/science/article/pii/S0022123696900919},
author ={Laca, Marcelo},
author={Raeburn, Iain},
}

\bib{Larsen:Crossed_abelian}{article}{
  author={Larsen, Nadia S.},
  title={Crossed products by abelian semigroups via transfer operators},
  journal={Ergodic Theory Dynam. Systems},
  volume={30},
  date={2010},
  number={4},
  pages={1147--1164},
  issn={0143-3857},
  review={\MRref{2669415}{2011j:46114}},
  doi={10.1017/S0143385709000509},
}

\bib{Li:Semigroup_amenability}{article}{
  author={Li, Xin},
  title={Semigroup $\textup C^*$\nobreakdash-algebras and amenability of semigroups},
  journal={J. Funct. Anal.},
  volume={262},
  date={2012},
  number={10},
  pages={4302--4340},
  issn={0022-1236},
  review={\MRref{2900468}{}},
  doi={10.1016/j.jfa.2012.02.020},
}

\bib{Muhly-Solel:Tensor}{article}{
  author={Muhly, Paul S.},
  author={Solel, Baruch},
  title={Tensor algebras over $C^*$\nobreakdash -correspondences: representations, dilations, and $C^*$\nobreakdash -envelopes},
  journal={J. Funct. Anal.},
  volume={158},
  date={1998},
  number={2},
  pages={389--457},
  issn={0022-1236},
  review={\MRref {1648483}{99j:46066}},
  doi={10.1006/jfan.1998.3294},
}

\bib{Murphy:Crossed_semigroups}{article}{
  author={Murphy, Gerard J.},
  title={Crossed products of $C^*$\nobreakdash-algebras by semigroups of automorphisms},
  journal={Proc. London Math. Soc. (3)},
  volume={68},
  date={1994},
  number={2},
  pages={423--448},
  issn={0024-6115},
  review={\MRref{1253510}{94m:46105}},
  doi={10.1112/plms/s3-68.2.423},
}

\bib{murphy1996}{article}{
author ={Murphy, Gerard J.},
doi={10.1216/rmjm/1181072114},
fjournal = {Rocky Mountain Journal of Mathematics},
journal = {Rocky Mountain J. Math.},
month = {03},
number = {1},
pages = {237--267},
publisher = {Rocky Mountain Mathematics Consortium},
title ={$C^*$-Algebras Generated by Commuting Isometries},
url = {https://doi.org/10.1216/rmjm/1181072114},
volume = {26},
year = {1996}
}

\bib{Chi-Keung:discrete_coactions}{article}{
author = {Ng, Chi-Keung},
year = {1996},
month = {02},
pages = {118--127},
title = {Discrete coactions on $\mathrm{C}^*$-algebras},
volume = {60},
booktitle = {Journal of the Australian Mathematical Society},
doi={10.1017/S1446788700037423},
review={\MRref{1364557}{97a:46093}},
}

\bib{Nica:Wiener--hopf_operators}{article}{
  ISSN = {0379-4024},
 URL = {http://www.jstor.org/stable/24715075},
 author = {Nica, A.},
 journal = {Journal of Operator Theory},
 number = {1},
 pages = {17--52},
 publisher = {Theta Foundation},
 title = {$C^*$\nobreakdash-algebras generated by isometries and Wiener--Hopf operators},
 review={\MRref{1241114}{46L35 (47B35 47C10)}},
 volume = {27},
 year = {1992}
}

\bib{Pimsner:Generalizing_Cuntz-Krieger}{article}{
  author={Pimsner, Mihai V.},
  title={A class of $C^*$\nobreakdash -algebras generalizing both Cuntz--Krieger algebras and crossed products by~$\mathbf Z$},
  conference={ title={Free probability theory}, address={Waterloo, ON}, date={1995}, },
  book={ series={Fields Inst. Commun.}, volume={12}, publisher={Amer. Math. Soc.}, place={Providence, RI}, },
  date={1997},
  pages={189--212},
  review={\MRref {1426840}{97k:46069}},
}

\bib{Quigg:FullAndReducedCoactions}{article}{
  author={Quigg, John C.},
  title={Full and reduced $C^*$\nobreakdash-coactions},
  journal={Math. Proc. Cambridge Philos. Soc.},
  volume={116},
  date={1994},
  number={3},
  pages={435--450},
  issn={0305-0041},
  review={\MRref{1291751}{95g:46126}},
  doi={10.1017/S0305004100072728},
}

\bib{Quigg:Discrete_coactions_and_bundles}{article}{
  author={Quigg, John C.},
  title={Discrete $C^*$\nobreakdash-coactions and $C^*$\nobreakdash-algebraic bundles},
  volume={60},
  number={2},
  journal={J. Austral. Math. Soc. Ser. A},
  issn={0263-6115},
  year={1996},
  pages={204--221},
  doi={10.1017/S1446788700037605},
  review={\MRref{1375586}{}},
}

\bib{Raeburn-Sims:Product_graphs}{article}{
  author={Raeburn, Iain},
  author={Sims, Aidan},
  title={Product systems of graphs and the Toeplitz algebras of higher-rank graphs},
  journal={J. Operator Theory},
  volume={53},
  date={2005},
  number={2},
  pages={399--429},
  issn={0379-4024},
  review={\MRref{2153156}{2006d:46073}},
  eprint={http://www.theta.ro/jot/archive/2005-053-002/2005-053-002-010.html},
}

\bib{Rennie_groupoidfell}{article}{
    author = {Rennie, Adam}, 
     author= {Robertson, David}, 
     author={Sims, Aidan},
       pages={561--580},
       volume={163},
    title = {Groupoid Fell bundles for product systems over quasi-lattice ordered groups},
    year = {2017},
    journal={Math. Proc. Camb. Phil. Soc.},
    doi={10.1017/S0305004117000202},
}

\bib{Sims-Yeend:Cstar_product_systems}{article}{
  author={Sims, Aidan},
  author={Yeend, Trent},
  title={$C^*$\nobreakdash-algebras associated to product systems of Hilbert bimodules},
  journal={J. Operator Theory},
  volume={64},
  date={2010},
  number={2},
  pages={349--376},
  issn={0379-4024},
  review={\MRref{2718947}{2011j:46092}},
  eprint={http://www.theta.ro/jot/archive/2010-064-002/2010-064-002-005.html},
}

  \end{biblist}
\end{bibdiv}
\end{document}